\def\beq{\begin{equation}}
\def\eeq{\end{equation}}
\def\nd{\noindent}
\def\<{\leq}
\def\>{\geq}
\newtheorem{thm}{Theorem}[section]
\newtheorem{lem}{Lemma}[section]
\newtheorem{prop}{Proposition}[section]
\newtheorem{defi}{Definition}[section]
\newtheorem{rem}{Remark}[section]
\begin{document}
\title{Flat Connections and Brauer Type Algebras }
\author{Zhi Chen}
\date{}
\maketitle

\begin{abstract}
\noindent In this paper, we introduce a Brauer type algebra $B_G
(\Upsilon) $ associated with every pseudo reflection group and every
Coxeter group $G$. When $G$ is a Coxeter group of simply-laced type
we show $B_G (\Upsilon)$ is isomorphic to the generalized Brauer
algebra of simply-laced type introduced by Cohen, Gijsbers and Wales
({\it J. Algebra}, {\bf 280} (2005), 107-153). We also prove $B_G
(\Upsilon )$ has a cellular structure and be semisimple for generic
parameters when $G$ is a dihedral group or the type $H_3$ Coxeter
group. Moreover, in the process of construction,  we introduce a
further generalization of Lawrence-Krammer representation to complex
braid groups associated with all pseudo reflection groups.
\end{abstract}

\tableofcontents

\section{Introduction}

Brauer algebras $B_n (\tau )$ introduced by Brauer \cite{Br} are
certain algebras connected with representation theory and knot
theory. These algebras have natural deformations found by Birman ,
Wenzl \cite{BW} and by Murakami \cite{Mu}, which are called BMW
algebras.

Like many objects related to Lie theory,  Brauer algebras can be
generalized to other general reflection groups (here by a general
reflection group we mean a Coxeter group or a pseudo reflection
group). In \cite{Ha} H\"{a}ring-Oldenburg introduced the Cyclotomic
Brauer algebras and cyclotomic BMW algebras associated with the
$G(m,1,n)$ type pseudo reflection groups. Slightly later in
\cite{CGW1} Cohen, Gijsbers and Wales introduced  a
 Brauer type algebra and a BMW type algebra for each simply laced
 Coxeter group.  It is proved that these new Brauer type algebras
share many nice algebraic properties with Brauer algebras like
semisimplicity for generic parameters in Cohen-Frenk-Wales
\cite{CFW}, supporting Cellular structures Cohen-Frenk-Wales op. cit
in the sense of Graham-Lehrer \cite{GL}. In \cite{GH} Goodman and
Hauschild introduced Affine BMW algebras as a generalization of BMW
algebras to affine $\hat{A}_n$ type.

It is  asked in Cohen-Gijsbers-Wales \cite{CGW1}  whether there
exist Brauer type algebras and BMW type algebras for non simply
laced Coxeter groups. With the help of KZ connections, we introduce
in this paper a  Brauer type algebra $B_G (\Upsilon)$ for each
general reflection group $G$ . we also justify that the algebras is
a suitable candidates for general Brauer type algebra from the
following aspects.

\begin{itemize}
\item  If $W_{\Gamma}$ is a simply laced Coxeter group of type
$\Gamma$, the algebra $B_{W_{\Gamma}} (\Upsilon)$ coincides with the
simply laced Brauer algebra of type $\Gamma$ introduced in
Cohen-Gijsbers-Wales ibid  (Definition 8.2, Theorem 8.4). if $G$ is
a type $G(m,1,n)$ pseudo reflection group, the cyclotomic Brauer
algebra introduced by H{\" a}ring-Oldenberg in \cite{Ha} appears as
a direct component of our algebra $B_G (\Upsilon)$ (Theorem 8.6 ).

\item When $G$ is a finite pseudo reflection group (including all finite Coxeter groups),
  $B_G (\Upsilon)$ supports a nicely shaped flat connection on the
  complementary space of reflection hyperplanes of $G$. Existence of
  such a connection is a general phenomenon among finite
 pseudo reflection groups Brou\'e -Malle-Rouquier \cite{BMR}, and simply laced Brauer algebras (Theorem
 3.2, Theorem 5.3 ). These flat connections insure in some sense that $B_G
 (\Upsilon)$ can be deformed to certain BMW type algebras.

 \item Every $B_G (\Upsilon)$ induces a generalized Lawrence-Krammer
  representation  of the associated complex braid group $A_G$ (Theorem 5.2).

  \item When $G$ is finite, $B_G (\Upsilon)$ is a finite dimensional
  algebra containing $\mathbb{C} G$ (Theorem 5.1 ). There exists a natural
  anti-involution in $B_G (\Upsilon)$ (Lemma 5.5) which may be used to construct
  a cellular structure.

  \item When $G$ is a dihedral group or a $H_3$ type Coxeter group,
  the algebra $B_G (\Upsilon)$ has a cellular structure, and is
  semisimple for generic $\Upsilon$ (Sections 6-7).

\end{itemize}

Before giving the definition, we set up some notations which will be
used throughout this paper.  Let $G\subset U(V)$ be a finite pseudo
reflection group. Denote by $R$ the set of pseudo reflections in
$G$, and let $\mathcal {A} = \{H_i \} _{i\in P}$ be the set of
reflection hyperplanes. For $s\in R$, define $i(s)\in P$ by
requesting $H_{i(s)}$ to be the reflection hyperplane of $s$. We
also denote the reflection hyperplane of $s$ as $H_s$.  Intersection
of a subset of $\mathcal {A}$ is called an edge. The action of $G$
on $V$ induces an action of $G$ on $\mathcal {A}$ naturally. For
$i,j \in P$, let $R(i ,j ) =\{s\in R\ |\ s(H_j ) =H_i \ \}$. For
$i\in P$, let $G_i $ be the subgroup of $G$ consisting of elements
that fixing $H_i $ pointwise, let $m_i = |G_i |$, and let $s_i $ be
the unique element in $G_i $ with exceptional eigenvalue $e^{\frac
{2\pi \sqrt{-1}}{m_i }}$.

 Set $M_G = V- \cup _{i\in P}
 H_i$. For $i\in P$, choose a
 linear function $\alpha _i$ such that $H_i =\ker \alpha _i$ and define
 $\omega _i = \frac{d\alpha _i }{\alpha _i }$, which are holomorphic
 closed 1-forms on $M_G$. We write $s_1 \sim s_2 $ for $s_1 ,s_2 \in R$ if $s_1$ and $s_2$ are in the
same conjugacy class, and write $i\sim j $ for $i,j \in P$ if
$w(H_i) =H_j $ for some $w\in G$. Chose $0\neq \mu _s \in
\mathbb{C}$ for every $s\in R$ and $m_i \in \mathbb{C}$ for every
$i\in P$ such that $\mu _{s_1 } =\mu_{s_2 }$  if $ s_1 \sim s_2
 $ , $m_i =m_j $  if $i\sim j.$ The data $\{\mu _s ,m_i \}_{s\in \mathcal {R} ,i\in P }$ will be
denoted by one symbol $\Upsilon $. A well-known theorem by Steinberg
says that $G$ acts on $M_G$ freely. We denote the group   $\pi _1
(M_G /G )$, $\pi_1 (M_G)$ as $A_G$,$P_G$ respectively,  which are
called complex braid groups and complex pure braid groups by many
authors.

The original model for above setting came from the symmetric group
$S_n$.  First, $S_n$ is realized as a reflection group acting on
 $\mathbb{C}^n$ by permuting the basis elements, whose reflection hyperplanes are
 $\{ H_{i,j}={\rm Ker}\,(z_i -z_j )\}_{1\leq i<j\leq
 n}$. Then  $M_{S_n} =Y_n =  \{ (z_1 ,\cdots ,z_n )\in \mathbb{C}^n | z_i \neq z_j \ for\ any\ i\neq j
 \}$, which is the configuration space of $n$ different points on
 $\mathbb{C}$.  The differential form associated with $H_{i,j}$ is
 $\frac{dz_i -dz_j}{z_i -z_j}$. The associated group $A_G$ is just
 the $n$ string braid group $B_n$.

 For $i\neq j$ we denote $H_i \pitchfork H_j $ if
$\{ k \in P \ |\ H_i \cap H_j \subset H_k \} =\{ i,j \}$. A
codimension 2 edge $L$ will be called a crossing edge if there
exists $i,j\in P$ such that $H_i \pitchfork H_j$ and $L= H_i \cap
H_j$, otherwise $L$ will be called a noncrossing edge.


\begin{defi}
  The algebra $B_G (\Upsilon )$
associated with pseudo reflection group $(V,G)$ is generated by the
set
  $\{ T_{w} \}_{w\in G } \cup  \{ e_i \}_{i\in P}$ which satisfies the following
relations.

\begin{itemize}
 \item[$(0)$]  $T_{w_1} T_{w_2} = T_{w_3 }$ if $w_1 w_2 =w_3 $.
\item[$(1)$] $ T_{s_i }  e_i  = e_i T_{s_i } = e_i $, for $i\in P$.
\item[$(1)^{'}$] $T_w e_i =e_i T_w = e_i $, for $w\in G$ such that $w(H_i) = H_i$, and  $H_i \cap V_w $ is a noncrossing
edge. Where $V_w =\{ v\in V | w(v)=v  \}$.
\item[$(2)$] $e_i ^2 = m_i e_i $ .
\item[$(3)$] $T_w e_j  = e_i T_w $ , if $w \in G  $ satisfies $w(H_j
)=H_i$.
\item[$(4)$] $e_i e_j = e_j e_i $,
 if  $H_i \pitchfork H_j$ .
 \item[$(5)$] $ e_i e_j = (\sum _{s\in R(i,j) } \mu _s T_s )e_j = e_i (\sum _{s\in R(i,j) } \mu _s T_s )$ ,
 if $H_i \cap H_j$ is a noncrossing edge,
  and $R(i,j) \neq \emptyset $.
\item[$(6)$] $e_i e_j =0 $, if $H_i \cap H_j$ is a noncrossing edge, and $R(i,j) =\emptyset $.
\end{itemize}

\end{defi}
 Each one of these relations can be thought of as generalization
 of certain
relation in the Brauer algebra $B_n (\tau)$. Relation $(4)$ is the
generalization of $e_{i,j}e_{k,l}=e_{k,l}e_{i,j}$ for different
$i,j,k,l$,  where $e_{i,j}$ is explained in the following Figure 1.
Relation $(6)$ can be seen as a special case of relation $(5)$, they
are generalizations of the relation $e_{i,j} e_{j,k} = s_{i,k}
e_{j,k} =e_{i,j} s_{i,k}$ in $B_n (\tau)$. Where
 $s_{i,j}$ is the $(i,j)$ permutation.  Relation $(1)^{'}$
resembles relation $(o)$ in Definition 2.1 of cyclotomic Brauer
algebras.  Motivation of introducing these algebras is as follows.

\begin{figure}[htbp]

  \centering
  \includegraphics[height=3cm]{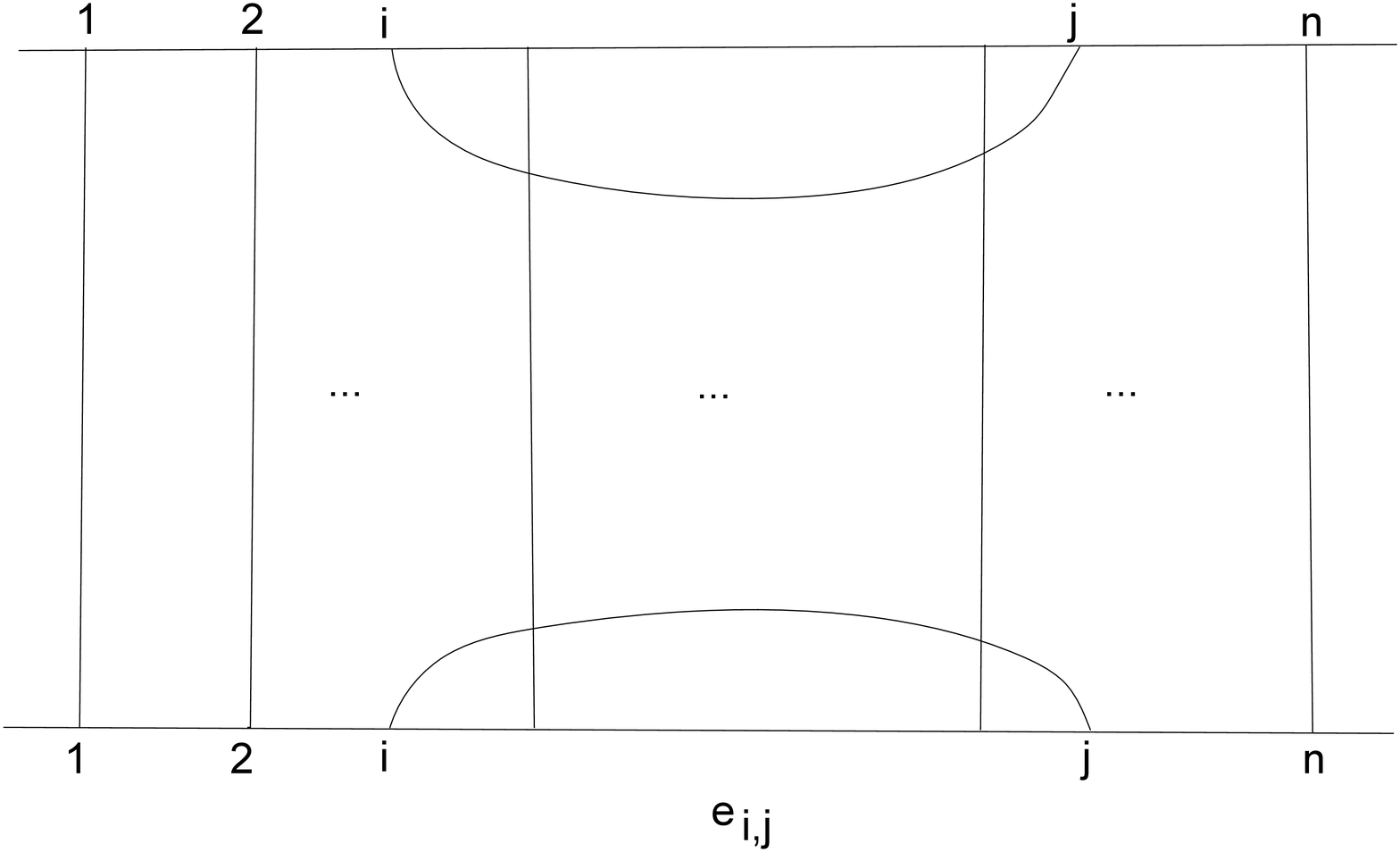}
  \caption{The element $e_{i,j}$}
\end{figure}

It is well-known that the group algebras of Coxeter groups and
pseudo reflection groups have deformations called Hecke algebras. It
is possibly less well-know that for any finite Coxeter group or any
pseudo reflection group, the infinitesimal deformation to the
corresponding Hecke algebra can be described by a KZ connection with
nice shape Cherednik \cite{Che} Brou\'e -Malle-Rouquier \cite{BMR}.

When $G$ is a finite Coxeter group, the KZ connection $\Omega_G$ (
$\Omega ^{'}_G$ ) describing deformation of $G$ to the $G$-type
Hecke algebra with equal parameter $H_G (q)$ ($G$-type Hecke algebra
with unequal parameters $H_G (\bar{q})$) is:
\begin{equation}
\Omega _G = \kappa \sum _{i\in P} s_i \omega _i,\ \ \ \ \ \ \ \ \
\Omega ^{'}_G =\sum _{i\in P} \kappa_i s_i \omega_i,
\end{equation}
where $\kappa$, $\kappa_i$'s are constants such that $\kappa_i
=\kappa_j$ if $i\sim j$. $\Omega _G$ is a formal connection on
$M_G$. Suppose $(U, \rho )$ is any representation of $G$. Then
$\Omega _G$ gives a $G$-invariant, flat connection $\rho (\Omega _G
) = \kappa \sum _{i\in P} \rho (s_i ) \omega_i  $
 on the bundle $M_G \times U$, which further induces a flat connection on the quotient bundle
 $M_G \times _G U$ whose monodromy representation factors
 through $H_G (q )$ for suitable $q$. As a special case, the
 KZ connection for symmetric group $S_n$ is
\begin{equation}
\Omega _n = \kappa \sum _{1\leq i<j\leq n} s_{i,j} \frac{dz_i
-dz_j}{z_i
 -z_j}.
\end{equation}

The KZ  connection for a pseudo reflection group $G$ has the form:

\begin{equation}
\Omega _G = \sum _{i\in P} \left(\sum _{s:\, i(s) =i } \mu_s s
\right) \omega _i ,
\end{equation}
where $\mu_s$ are constants such that $\mu_s =\mu_{s^{'}}$ if $s\sim
s^{'}$. It plays an important role in Brou\'e -Malle-Rouquier
\cite{BMR} in construction of the generalized  Hecke algebras for
pseudo reflection groups (see also Ariki-Koike \cite{AK}). Note that
in above connections the operator terms come from pseudo reflection
group, the flatness and $G$-equivariance come from relations in $G$.

Now the Brauer algebra $B_n (\tau )$ has also a natural deformation
$B_n (\tau ,l)$, the BMW algebra. In \cite{Ma1}, the following flat
formal connection supported by  $B_n (\tau)$ was implied :
$$\bar{\Omega}_n =\kappa \sum _{1\leq i<j\leq n} (s_{i,j} -e_{i,j})
\omega_{i,j}.$$ Where $s_{i,j}$ is the $(i,j)$ permutation,
$e_{i,j}$ is the element described by above Figure 1. This
connection $\bar{\Omega}_n$ is flat and $S_n$-equivariant
(Proposition 4 of \cite{Ma1}, see also Proposition 3.1 ). Marin also
proved if $(U, \rho )$ is any representation of $B_n (\tau )$,then
the $S_n
 -$invariant, flat connection $\rho (\bar{\Omega}_n )$ on the bundle
 $M_{S_n} \times U $ induces a flat connection on the quotient
 bundle $M_{S_n} \times _{S_n } U $, whose monodromy representation
 factor through the BMW algebra $B _n (\tau , l)$ for suitable $\tau ,l
 $ (Proposition 4 of \cite{Ma1}, see also Theorem 3.2 ). So the
 connection $\bar{\Omega}_n$ can be seen as the KZ connection for  $B_n (\tau
 )$.

Later we will show (Theorem 5.3 ) the deformation of a simply laced
Brauer algebra $B_{\Gamma}(m)$ to the simply laced BMW algebra can
be  described by a flat connection
\begin{equation}
\bar{\Omega}_{\Gamma}= \kappa \sum_{i\in P} (s_i -e_i  ) \omega_i,
\end{equation}
where $\{ e_i  \}_{i\in P} \subset B_{\Gamma}(m)$ is a set of
semi-idempotents (by a semi-idempotent we mean elements $x$
satisfying $x^2 = \lambda x$ ) in one-to-one correspondence with
$P$, the set of reflection hyperplanes.

 Suppose $\Gamma$ is any finite type Dynkin diagram,  Denote by $W_{\Gamma}$ the Coxeter group of type $\Gamma$. Now we present a bold but reasonable hypothesis about the
general Brauer type algebra $B_{W_{\Gamma}} (\Upsilon)$:
$B_{W_{\Gamma}} (\Upsilon)$ can be deformed to certain BMW type
algebra and the deformation can be described by a nicely shaped KZ
connection $\bar{\Omega}_{\Gamma}$ on $M_{W_{\Gamma}}$.

The most natural form of the connection $\bar{\Omega}_{\Gamma}$ is
as equation $(4)$. When the set of reflections in $W_{\Gamma}$
contain more than one conjugacy class, in view of the connection
$\Omega^{'} _{\Gamma}$ in Equation $(1)$, the KZ connection
associated with $B_{W_{\Gamma}}(\Upsilon)$ should have a more
general form containing more parameters. Thus we make the following
hypothesis
about $B_{W_{\Gamma}}(\Upsilon)$.\\

\nd{\bf Hypothesis 1.} { \it $B_{W_{\Gamma}}(\Upsilon)$ contains the
group algebra $\mathbb{C} W_{\Gamma}$ and a set of semi-idempotents
$\{ e_i \}_{i\in P}$, such that the formal connection  $
\bar{\Omega} _{\Gamma}= \sum _{i\in P} \kappa_i (s_i -e_i ) \omega
_i$  is flat and $W_{\Gamma}$-invariant, where $\{ k_i \}_{i\in P}$
is a set of constant numbers such that $k_i =k_j $ if $i\sim
j$. More over, $B_{W_{\Gamma}}(\Upsilon)$ is generated by $W_{\Gamma} \cup \{ e_i \}_{i\in P}. $   }\\

More generally for a pseudo reflection group $G$, we assume the
algebra $B_G (\Upsilon)$ should also contain $\mathbb{C} G$ and a
set of special elements $\{ e_i \}_{i\in P}$, such that the formal
connection $\bar{\Omega}_G = \sum _{i\in P} (\sum _{s: i(s) =i }
\mu_s s -e_i ) \omega _i$ is flat and $G$-invariant. The shape of
$\bar{\Omega} _G$ is also inspired by the generalized
Lawrence-Krammer representation of $A_G$ defined later.

By Theorem 3.1 in Kohno \cite{Ko1}, we can derive some algebraic
relations between $R \cup \{ e_i \}_{i\in P}$  from flatness and
$G$-invariance of $\bar{\Omega}_G$.  But these relations are not
enough to produce the Brauer type algebra we want.

  Annother common feature of simply laced BMW algebras as proved in Cohen-Gijsbers-Wales \cite{CGW1} is that they all contain
the generalized  Lawrence-Krammer representations of simply laced
Artin groups introduced by  Cohen-Wales \cite{CW} and Digne
\cite{Di}.  Recently Marin \cite{Ma2} introduced a generalized
Lawrence-Krammer representations of $A_G$ for any complex reflection
group $G$ (in this paper we use the phrase 'complex reflection group
' to denote those pseudo reflection groups all of whose pseudo
reflections have degree two). In section 4 we introduce a slightly
further generalization of the Lawrence-Krammer representation to
$A_G$ for each pseudo reflection group G. The idea is as follows.
Let $V_G = \mathbb{C} \{ v_i \} _{i\in P }$ be a vector space with a
basis in one-to-one correspondence with $\{H_i \} _{i\in P}$. Action
of $G$ on $\{H_i \} _{i\in P}$ induces a natural representation
$\iota : G \rightarrow End(V_G )$.

From Marin's work \cite{Ma3} we see that the simply laced type
Lawrence-Krammer representations can be described by certain flat,
$G-$invariant connection
\begin{equation}
\Omega _{LK} =\kappa \sum _{i\in P} (\iota ( s_i ) -p_i )\omega_i
\end{equation}
on the bundle $M_G \times V_G$. Where $s_i \in R$ is the unique
pseudo reflection having $H_i$ as its reflection hyperplane. We
observe that for any $i$ the map $p_i \in End(V_G)$ is a projector
to the line $\mathbb{C} v_i \subset V_G$. It inspires us to consider
a special kind of connection  on the bundle $M_G \times V_G$ for any
pseudo reflection group $G$:
$$\Omega _{LK} = \sum _{i\in P } \left(\sum _{s:\, i(s) =i } \mu _s \iota
(s) - p_i \right) \omega _i$$
 where $\mu_s$'s are constants such that $\mu_{s}=\mu_{s^{'}}$ if $s\sim s^{'}$. And $p_i \in
End(V_G )$ is a projector to the line $\mathbb{C}v_i \subset V_G$
for any $i$. Explicitly suppose $p_i (v_j) =  \alpha _{i,j} v_i
(j\neq i)$ and $p_i (v_i)=m_i v_i$. Then we have\\

\nd { \bf Theorem 4.2 } {\it  The connection $\Omega _{LK}$ is flat
and $G$-equivariant
 if and only if the following two conditions hold{\rm :} $a)$ $\alpha _{i,j} = \sum _{s:
\iota(s)(v_j ) =v_i} \mu _s $; $b)$ $m_i = m_j  $ if $i\sim j$ }.
\\

When $\Omega _{LK}$ satisfy the conditions in Theorem 4.2, it
induces a flat connection $\bar{\Omega}_{LK}$ on the quotient bundle
$M_G \times _G V_G$. We define the generalized Lawrence-Krammer
representation  of $A_G$  as the monodromy representation of
$\bar{\Omega}_{LK}$. When $G$ is a complex reflection group, and
$\mu _s =1$ for all $s$, the connection $\Omega _{LK} $ becomes the
flat connection of Marin £¬ \cite{Ma2}.

 Now suppose $\{ p_i \}_{i\in R} \subset
End(V_G )$ satisfy conditions in Theorem 4.2. It is proved in
Cohen-Gijsbers-Wales \cite{CGW1}  that every simply laced BMW
algebra contain a generalized Lawrence-Krammer representation, just
as the case of braid groups  in Zinno \cite{Z}.    This fact can be
explained in infinitesimal level in the following sense.   Defining
a map $\phi : W_{\Gamma} \cup \{ e_i \}_{i\in P} \rightarrow End(V_G
)$ by $\phi (w) = \iota (w) $ for $w\in W_{\Gamma}$; $\phi (e_i)
=p_i $ for $i\in P$, then  $\phi$ can be extended  to a
representation $B_{\Gamma}(\tau )\rightarrow End(V_G).$

 Regarding to these facts we make another hypothesis about $B_G
(\Upsilon)$:\\

 \nd{\bf Hypothesis 2.} {\it For any pseudo reflection group $G$, the map $w\mapsto \iota (w)$ for $w\in G$;  $  e_i \mapsto
 p_i $ for $i\in P$ can be extended to a representation  $B_G
 (\Upsilon)$. Where we suppose $P_i$ satisfy conditions in above Theorem 4.2. } \\

Now we search if there exist suitable relations between $G$ and $\{
e_i \}_{i\in P}$
 such that the resulted algebra $B_G (\Upsilon)$ satisfy
 Hypotheses
 1 and 2.  As a result we find there do exists one,  the algebra defined in Definition 1.1 satisfy these
 two conditions quite nicely.

\nd{\bf Proposition 5.1} {\it The connection  $ \bar{\Omega}_G =
\sum _{i\in P} (\sum _{s: i(s) =i } \mu_s T_s -e_i ) \omega _i$ are
flat and $G$-invariant. Where $T_s$, $e_i$ are as in Definition 1.1.
}

\nd{\bf Theorem 5.2 } {\it Using notations in section 4. The map
$w\mapsto \iota (w) $, $e_i \mapsto p_i $ extends a representation
$B_G (\Upsilon) \rightarrow End(V_G )$.  }

  In fact we believe it is the best choice. There are
 two other slightly different choices: take
 off the relation $(1)^{'}$ or weaken relation $(6)$.  In the last
 section we explain some reason of choosing Definition 5.1.

 In section 8 we
 show that there exist canonical presentations for $B_G (\Upsilon)$ when $G$ is a finite Coxeter group or a type $G(m,1,n)$ pseudo reflection group.
 (Definition 8.1, Definition 8.2, Theorem 8.4 ). They
  can be seen  a generalization of the presentation
 for simply laced Brauer algebras in \cite{CFW}.  Definition 8.2 can
 be naturally generalized to the cases when $G$ is an infinite type
 Coxeter group (Remark 8.1).
In a canonical presentation each node $i$ of the Dynkin diagram
 corresponds to a pair of generators $\{ s_i , e_i \}$.  Thus in
 cases of $G(m,1,n)$ type we have one more generator $e_0$ than in the
  canonical presentation of cyclotomic Brauer algebras $\mathcal {B}_{m,n}(\sigma)$. It is this
  new generator $e_0$ that making  $B_G (\Upsilon)$ slightly larger than $\mathcal
  {B}_{m,n}(\sigma)$ (Theorem 9.1). Through canonical presentations we see immediately that $B_G
(\Upsilon)$ coincides with the simply
 laced Brauer algebra of \cite{CGW1} if $G$ is a simply laced type Coxeter
 group. These canonical presentations may be helpful to define new BMW type
 algebras, which will be discussed in a future paper.

 Section 6 and 7 are devoted to show these new algebras for non simply laced Dynkin diagrams are indeed interesting objects by
 finding some nice algebraic properties of them.Concretely they
 are: $\bf{(SEM)}$ semisimple for generic parameters; $\bf{(CEL)}$ having
cellular structures; $\bf{(DEF)}$ deformability and $\bf{(STA)}$
dimension stability (having the same dimension for any parameters
$\Upsilon$ ). Because of limitations of spaces in this paper we
 only study in detail the cases when $G$ is a dihedral group or the
 $H_3$ type Coxeter group.

When $G$ is one of above mentioned cases, we prove that $B_G
(\Upsilon)$ satisfies $\bf{(SEM)}$, $\bf{(CEL)}$ and $\bf{(STA)}$
and write down the condition for $B_G (\Upsilon)$ to be semisimple .
Through the study of the $H_3$ case we find for the $H_3$ type Artin
group three new 15 dimensional irreducible representations except
for the generalized Lawrence-Krammer representation, and one new  5
dimensional irreducible representation. All of these representations
have clear combinatorial meaning, they are related to two kinds of
natural actions of $W_{H_3}$ on certain sets. We believe that the
existence of the KZ connections supports the property
$\bf{(DEF)}$ for every $B_G (\Upsilon)$.\\

\nd {\bf Acknowledgements} I would like to thank Toshitake
 Kohno for teaching me KZ equations, and thank  Susumu Ariki,  Sen Hu,   Hebing Rui and Bin Xu
 for many beneficial communications and advices. Especially thanks
 Ivan Marin for pointing out several mistakes in the original
 manuscript. Also thank  Jie Wu for his invitation to NUS in Dec 2008, this work was partially done during
 that stay.

\section{Preliminaries }
  \subsection{Brauer type algebras and BMW type algebras }
  Brauer algebra $B_n (\tau )$ is a graphic algebra
 in the sense that it has a basis
consisting of elements presented by graphs, and the relations
between them can be described through graphs.  $B_n (\tau ) $ has a
canonical presentation with generators  $s_1 , \cdots ,s_{n-1 }$,
$e_1 ,\cdots ,e_{n-1 }$ and relations listed in table 1. $B_n (\tau
)$ has a natural deformation discovered by Birman, Murakami, Wenzl
which are now called BMW algebras \cite{BW} \cite{Mu}. These
algebras support a Markov trace which gives the Kauffman polynomial
invariants of Links.  We denote these BMW algebras as $B _n (\tau ,l
)$.  Where $l$ is a parameter of deformation.  There is $B_n (\tau)
\cong B_n (\tau , 1 ).$ We list generators and relations of $B_n
(\tau ) $ and $B _n (\tau ,l )$ in the following table according to
\cite{CGW1}. Where $m=\frac {l-l^{-1}}{1-\tau}$.

The structure of Brauer algebras and BMW algebras are studied
extensively in last 20 years. See for example \cite{W} \cite{RH}.
They have the following basic properties.

\nd{\bf Theorem (Wenzl)} Let the ground ring be a field of character
0, then $B_n (\tau )$ is semisimple if and only if $\tau \notin
\mathbb{Z}$ or $\tau \in \mathbb{Z}$ and $\tau >n .$\\
$$ TABLE\  1.\ Presentation\ for\  B_n (\tau ). $$
\begin{tabular}{|l|l|l|}

\hline

   & $B_n (\tau )$  & $B _n (\tau,l)$ \\
\hline

  Generators    & $s_1 $,$\cdots $,$s_{n-1}$;$e_1$ ,$\cdots$ ,$e_{n-1}$ & $X_1$ , $\cdots $,$X_{n-1}$; $E_1$ ,$\cdots$ ,$E_{n-1}$ \\
\hline

  Relations   &  $s_i s_{i+1} s_i =s_{i+1} s_i s_{i+1}$    & $X_i X_{i+1} X_{i} =X_{i+1} X_{i} X_{i+1}$\\
&$for 1\leq i\leq n-2 $; &$for 1\leq i\leq n-2$  ;\\

& $s_i s_{i-1} e_i = e_{i-1} s_i s_{i-1}  $
& $X_i X_{i-1} E_i = E_{i-1} X_i X_{i-1} $\\
& $for 2\leq i\leq n-1$;  &$for 2\leq i\leq n-1$;\\

 & $s_i s_{i+1} e_i = e_{i+1} s_i s_{i+1}  $
& $X_i X_{i+1} E_i = E_{i+1} X_i X_{i+1} $\\
&$for 1\leq i\leq n-2$; &$for 1\leq i\leq n-2$;\\
& $s_i s_j =s_j s_i$ ,$|i-j|\geq 2 $;& $X_i X_j =X_j X_i ,|i-j| \geq 2$ ;\\
 &$s_i ^2 =1 ,for\  all\  i $;& $l(X_i ^2 +mX_i -1) =mE_i , for\  all\
i $;\\
 & $s_i e_i =e_i ,for\ all\ i $;&$ X_i E_i = l^{-1} E_i ,for\ all\ i $;\\
& $e_i s_{i+1} e_{i} = e_{i}, 1\leq i \leq n-2 $; &$ E_i X_{i+1}
E_{i} = l E_i ,1\leq i \leq n-2 $;
\\
&$ e_i s_{i-1} e_i =e_i , 2\leq i \leq n-1$ ; &$ E_i X_{i-1} E_{i} =
lE_{i} ,2\leq i\leq n-1 $; \\
&$s_i e_j = e_j s_i , |i-j|>1 $; & $X_i E_j =E_j X_i ,|i-j|>1 $;\\
&$e_i ^2 = \tau e_i ,for\  all\  i.$ &$ E_i ^2 =\tau E_i $
. \\

\hline
\end{tabular}\\

Semisimplicity  condition for any groundrings is obtained by Rui
$\cite{RH}$. Many algebras related to Lie theory have cellular
structures in the sense of Graham and Lehrer $\cite{GL}$. We recall
the definition of a cellular structure (cellular algebra). In the
same paper Graham and Lehrer proved Brauer algebras support cellular
structures. Similar result for BMW algebras are proved by Xi
$\cite{Xi2}$.

\nd{\bf Definition (Graham, Lehrer)[15]} A cellular algebra over $R$
is an associative algebra $A$, together with cell datum $(\Lambda
,M,C,*)$ where
\begin{itemize}
  \item (C1) $\Lambda $ is a partially ordered set and for each $\lambda \in
\Lambda$ ,$M(\lambda)$ is a finite set such that$ C:\cap _{\lambda
\in \Lambda} M(\lambda ) \times M(\lambda ) \rightarrow A $ is an
injective map with image an R-basis of A.
 \item (C2) If $\lambda \in \Lambda $ and $S,T\in M(\lambda )$,
  write $C(S,T)= C^{\lambda } _{S,T} \in A$. Then $*$ is an
  $R$-linear anti-involution of A such that $*(C^{\lambda } _{S,T} )  =C^{\lambda }
  _{T,S}$.
\item (C3) If $\lambda \in \Lambda $ and $S,T \in M(\lambda )$
  then for any element $a\in A$ we have

 $aC^{\lambda } _{S,T} \equiv \sum _{S^{'}\in M(\lambda )} r_a (S^{'} ,S) C^{\lambda} _{S^{'},T} (modA(<\lambda))
   $

   Where $r_a (S^{'} ,S)\in R$ is independent of $T$ and where $A(<\lambda
   )$ is the R-submodule of $A$ generated by $\{ C^{\mu } _{S^{''} ,T^{''} } | \mu < \lambda ; S^{''}, T^{''} \in M(\mu )
   \}.$.

\end{itemize}

The cyclotomic Brauer algebras $\mathscr{B} _{m,n}(\delta )$ of
H\"{a}ring-Oldenburg has the following presentation. (borrowed from
$\cite{RX}$)
\begin{defi}
 The algebra $\mathscr{B} _{m,n}(\delta )$ is generated by a set
 $\{ s_i ,e_i \}_{1\leq i< n} \cup \{ t_j \} _{1\leq j\leq n}$ with
 the following  relations.
\begin{align*}
&a) s_i ^2 =1 ,for\  1\leq i\leq n. & k)& e_i s_i =e_i =s_i e_i ,for\  1\leq i \leq n-1. \\
&b) s_i s_j =s_j s_i ,if\  |i-j|>1. & l)& s_i e_{i+1} s_i =s_{i+1} e_i, for\  1\leq i\leq n-2. \\
&c) s_i s_{i+1} s_i = s_{i+1} s_i s_{i+1}, for\ 1\leq i<n-1. & m)& e_{i+1} e_i s_{i+1} =e_{i+1} s_i, \\
&d) s_i t_j =t_j s_i, if\ j\neq i,i+1.  & &   for 1\leq i\leq n-2. \\
&e) e_i ^2 =\delta _0 e_i , for\ 1\leq i<n.      &n)& e_i e_j e_i =e_i ,if\ |i-j|=1.     \\
&f) s_i e_j =e_j s_i ,if\ |i-j|>1.      &o)& e_i t_i t_{i+1} =e_i =t_i t_{i+1} e_i ,    \\
&g) e_i e_j =e_j e_i ,if\ |i-j|>1.       &\ & \ \ \ \ \ \ for \ 1\neq i<n.     \\
&h) e_i t_j =t_j e_i ,if\ j\neq i,i+1.     &p)& e_i t_i ^a e_i =\delta _a e_i , for\ 1\leq a\leq m-1     \\
&i) t_i t_j =t_j t_i ,for \ 1\leq i,j\leq n.      &\ & \ \ \ \ \ 1\leq i\leq n-1.     \\
&j) s_i t_i =t_{i+1} s_i ,for 1\leq i<n . &q)& t_i ^m=1,for\ 1\leq
i\leq n.
\end{align*}
\end{defi}

The subset of generators $\{s_i \}_{1\leq i<n} \cup \{ t_j \}_{1\leq
j\leq n}$ together with relations (a),(b),(c),(d),(i),   (j),(q)
generate the cyclotomic reflection group of type $G(m,1,n)$ whose
group algebra is imbedded in $\mathscr{B} _{m,n}(\delta )$. The
original paper $\cite{Ha}$ define more complicated cyclotomic BMW
algebra, where the generators and relations can be represented by
graphs also. These algebras have many properties parallel with
Brauer algebras. In  $\cite{RX}$, the authors proved they are
semisimple and classified their irreducible representations under
certain generic conditions. By Goodman in $\cite{Go}$ and by Yu in
$\cite{Yu}$ independently, $\mathscr{B} _{m,n}(\delta )$ are shown
to have cellular structures.

  Finite type simply-laced Dynkin diagram consists of  ADE type Dynkin
diangrams. For every such Dynkin diagram $\Gamma$, the following
table are presentation for algebra $B _{\Gamma } (\tau )$ and
algebra
 $B_{\Gamma } (\tau , l)$ defined in $\cite{CGW1}$. When $\Gamma $
 is $A_{n-1}$, it is straightforward to see they coincide with $B_n (\tau
 )$ and $B_n( \tau ,l)$ respectively. Let $I$ be the set of nodes of $\Gamma$. When $i, j \in I$ are
connected by an edge we write $i\sim j$. Otherwise we write $i\nsim
j$. Set $m=\frac {l-l^{-1}}{1-\tau }$.

The simply laced  Brauer algebras have no graph to representing
their elements any more, but they have almost all important
algebraic properties of Brauer algebras. In $\cite{CFW}$ the authors
proved when $\Gamma$ is finite ADE type, these algebras $B_{\Gamma }
(\tau )$ are free module over $\mathbb{Z}[\tau ^{\pm 1} ]$, and be
semisimple after tensored with $\mathbb{Q} (\tau )$.
$$TABLE\ 2.\ Presentation\ for\ B_{\Gamma }
(\tau ).$$
\begin{tabular}{|l|l|l|}

\hline

   & $B_{\Gamma } (\tau )$  & $B _{\Gamma } (\tau,l)$ \\
\hline

  Generators    & $s_i  (i\in I ) $; $e_i (i\in I) $& $X_i (i\in I)$ ; $E_i (i\in I)$ \\
\hline

  Relations   &  $s_i s_j s_i =s_j s_i s_j$, $if\ i\sim j $  ;  & $X_i X_j X_{i} =X_j X_{i} X_j$, $if\ i\sim j$;
\\
 & $s_i s_j e_i = e_j s_i s_j $ if $i \sim j$; & $X_i X_j E_i = E_j X_i X_j
 $ if $i \sim j$;\\
 & $s_i s_j =s_j s_i$ ,$if\ i\nsim j $;& $X_i X_j =X_j X_i ,if\ i\nsim j $ ;\\
 &$s_i ^2 =1 ,for\  all\  i $;& $l(X_i ^2 +mX_i -1) =mE_i , for\  all\
i $;\\
 & $s_i e_i =e_i ,for\ all\ i $;&$ X_i E_i = l^{-1} E_i ,for\ all\ i $;\\
& $e_i s_{j} e_{i} = e_{i}, if\ i\sim j $; &$ E_i X_{j} E_{i} = l
E_i ,if i\sim j $;
\\

&$s_i e_j = e_j s_i , if\ i\nsim j $; & $X_i E_j =E_j X_i ,if i\nsim j $;\\
&$e_i ^2 = \tau e_i ,for\  all\  i.$ &$ E_i ^2 =\tau E_i $
. \\

  \hline
\end{tabular}\\

\subsection{Pseudo reflection groups, Complex braid groups and Hecke algebras}
Let $V$ be a complex linear space. An element $s$ in $GL(V)$ is
called a pseudo reflection if it can be presented as $diag(\xi
,1,\cdots ,1)$ under some basis of $V$, where $\xi$ is a root of
unit. We call $\xi$ as the exceptional eigenvalue of $s$.  If $\xi$
is $-1$ then $s$ is simply called a reflection. A finite group
$G\subset GL(V)$ is called a pseudo reflection group if it is
generated by pseudo reflections. If $G$ is generated by reflections
then we call it a complex reflection group.  When $V$ is an
irreducible representation of $G$, $(V,G)$ is called an irreducible
pseudo reflection group. Every pseudo reflection group is isomorphic
to direct product of some irreducible factors. Isomorphism class of
irreducible pseudo reflection groups are classified by Shephard-Todd
\cite{ST}. They consists of an infinite family $\{\  G(m,p,n)\  \} $
( $n\leq 1 $,$m\leq 2$ ,$p|m $ ) and 34 exceptional ones.

For a pseudo reflection group $(V, G)$ we assume notations in
section 1. Denote $\pi _1 (M_G)$ as $P_G$, then there is an exact
sequence:  $1\rightarrow P_G \rightarrow A_G \rightarrow G
\rightarrow 1 .$

By Ariki, Koike in \cite{AK} and by Broue-Malle-Rouquier in
\cite{BMR}, there exists a Hecke algebra $H _G (\bar {\lambda})$
associated with any pseudo reflection group $G$, where $\bar
{\lambda}$ is a set of parameters. The Hecke algebra $H _G (\bar
{\lambda})$ is a quotient algebra of the group algebra $\mathbb{C}
A_G $. For most $G$'s, we have  $\dim H _G (\bar {\lambda})=  |G|$,
and this relation is a conjecture for other cases.  For some $G$'s
and for generic $\bar {\lambda}$, $H _G (\bar {\lambda})$ is a
semisimple algebra whose irreducible representations are in one to
one correspondence with those of $G$ in a natural way. This
correspondence can be described by the following KZ connection.

   Suppose $\{ \mu _s \} _{s\in R
 }$ is a set of constants satisfying the condition:
$\mu _{s_1 } =\mu _{s_2} $ if $s_1 $ is conjugate to $s_2$. Here for
simplicity we choose a connection with slightly different appearance
from \cite{BMR}.

\begin{prop}[Brou\'e -Malle-Rouquier \cite{BMR}]
The formal connection  $$\Omega _G  =  \sum _{i\in P } (\sum _{s\in
R,i(s)=i }\mu _s s )\omega _i $$ on $M_G \times \mathbb{C} G$is flat
and $G$- invariant.
\end{prop}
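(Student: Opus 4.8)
The plan is to treat $\Omega_G$ as a connection form for $\nabla = d - \Omega_G$ on the trivial bundle $M_G\times\mathbb{C}G$, where each coefficient $A_i := \sum_{s:\,i(s)=i}\mu_s s \in \mathbb{C}G$ acts by left multiplication and $\omega_i = d\alpha_i/\alpha_i$. Since each $\omega_i$ is closed we have $d\Omega_G = 0$, so the curvature is simply $\Omega_G\wedge\Omega_G = \sum_{i,j}A_iA_j\,\omega_i\wedge\omega_j$, and flatness is equivalent to $\Omega_G\wedge\Omega_G = 0$. To organize this $2$-form I would group the index pairs by the codimension-$2$ edges they cut out: any two distinct hyperplanes meet in a codimension-$2$ edge $L$, and for such an edge set $P_L = \{i\in P : H_i\supset L\}$. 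By the Arnold--Brieskorn relations among the forms $\omega_i\wedge\omega_j$ with $i,j\in P_L$ --- precisely the content of Theorem 3.1 in Kohno \cite{Ko1} --- the vanishing $\Omega_G\wedge\Omega_G = 0$ reduces to the infinitesimal braid relations
\[
\Bigl[\,A_i,\ \sum_{j\in P_L} A_j\,\Bigr] = 0 \qquad\text{for every codimension-$2$ edge } L \text{ and every } i\in P_L.
\]

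The heart of the argument is verifying these commutators, and I expect this to be the main (if not hard) step. Put $B_L = \sum_{j\in P_L}A_j = \sum_{s\in R:\, H_s\supset L}\mu_s\, s$, and let $G_L\subset G$ be the pointwise stabilizer of $L$. By Steinberg's theorem $G_L$ is itself a pseudo reflection group, generated exactly by the pseudo reflections $s$ with $H_s\supset L$; in particular every such $s$, and hence every $A_i$ with $i\in P_L$, lies in $\mathbb{C}G_L$. I claim $B_L$ is central in $\mathbb{C}G_L$. Indeed, for $w\in G_L$ conjugation sends a pseudo reflection $s$ with $H_s\supset L$ to $wsw^{-1}$, whose hyperplane is $w(H_s)\supset w(L)=L$, so conjugation permutes the summands of $B_L$; and since $\mu$ is a class function, $\mu_{wsw^{-1}}=\mu_s$. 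Hence $wB_Lw^{-1}=B_L$ for all $w\in G_L$, so $B_L$ is fixed by every inner automorphism of $\mathbb{C}G_L$ and therefore lies in its centre. As $A_i\in\mathbb{C}G_L$ we conclude $[A_i,B_L]=0$, which is exactly the required relation and gives flatness.

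For $G$-invariance I would argue directly with the two natural actions. An element $g\in G$ permutes the hyperplanes, say $g(H_i) = H_{g\cdot i}$; then $g^*\omega_{g\cdot i} = \omega_i$, because $\alpha_{g\cdot i}\circ g$ differs from $\alpha_i$ only by a nonzero scalar and $d\log$ ignores scalars. On the coefficients, $g A_i g^{-1} = \sum_{i(s)=i}\mu_s\, gsg^{-1} = A_{g\cdot i}$, using again that $gsg^{-1}$ is a pseudo reflection with hyperplane $H_{g\cdot i}$ and that $\mu_{gsg^{-1}}=\mu_s$. Combining these shows that pulling $\Omega_G$ back along the action of $g$ and conjugating its coefficients by $g$ reproduces $\Omega_G$, i.e. $\Omega_G$ is $G$-equivariant and hence descends to a flat connection on the quotient bundle over $M_G/G$. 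The only genuinely substantive point in the whole proof is the centrality of $B_L$ in $\mathbb{C}G_L$; everything else is the standard closed-form/Kohno reduction and routine bookkeeping with the $G$-action.
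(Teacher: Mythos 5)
Your proof is correct, but there is nothing in the paper to compare it against: the paper states this proposition as a quoted result of Brou\'e--Malle--Rouquier \cite{BMR} and gives no proof, using it later only as a black box (e.g.\ in the proofs of Theorem 4.2 and Proposition 5.1, where ``the sum of those terms containing no $p_i$ is zero'' is justified by citing it). Your argument is the standard one, and essentially the one in \cite{BMR}: reduce flatness via Kohno's criterion (the paper's Theorem 3.1) to the commutators $[A_i,\sum_{j\in P_L}A_j]=0$ over codimension-$2$ edges $L$, then observe that $B_L=\sum_{H_s\supset L}\mu_s s$ is fixed under conjugation by the pointwise stabilizer $G_L$ --- because conjugation by $w\in G_L$ permutes the pseudo reflections whose hyperplanes contain $L$ and $\mu$ is a class function --- hence central in $\mathbb{C}G_L$, while each $A_i$ with $i\in P_L$ lies in $\mathbb{C}G_L$. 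This is also precisely the pattern the paper itself follows when it proves its own flatness statements (Theorem 4.2, Proposition 5.1), so your route is consistent with the paper's methods. One small remark: the appeal to Steinberg's theorem is superfluous. You never need that $G_L$ is \emph{generated} by the reflections containing $L$; you only need that each such pseudo reflection lies in $G_L$ (immediate, since it fixes its own hyperplane pointwise and $L\subset H_s$), and that conjugation by elements of $G_L$ preserves the indexing set of $B_L$. Everything else --- the closedness of the $\omega_i$, the Arnold--Brieskorn/Kohno reduction, and the equivariance bookkeeping $g^*\omega_{g\cdot i}=\omega_i$, $gA_ig^{-1}=A_{g\cdot i}$ --- is routine and correctly handled.
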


Now suppose $\rho : G\rightarrow GL(U)$ is a representation of $G$
on a complex linear space $U$.  The group $G$ acts on the bundle
$M_G \times U$ as: $g(p ,\ u )= (g\cdot p , \rho (g)(u))$ for $g\in
G$, $p\in M_G$ and $u\in U$. The quotient space $M_G \times U / G$
become a linear bundle over $M_G /G$ naturally, and it will be
denoted as $M_G \times _G U$.  Now suppose $\Omega =\sum _{i\in P}
X_i \omega _i $ is a connection on $M_G \times U$, where $X_i \in
End (U)$ for any $i.$ Here is the condition for $\Omega$ to induce a
connection on $M_G \times _G U$. See section 4 of \cite{BMR} for
some backgrounds about connections.

\begin{prop}
The connection $\Omega$ induce a connection on $M_G \times _G U$ if
:

$\rho (w) X_i \rho (w) ^{-1} = X_{w(i)}$ for any $w\in G$ and $i\in
P$.
\end{prop}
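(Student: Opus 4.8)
The plan is to recognize this as the standard descent criterion for connections on a $G$-equivariant bundle and to reduce it to a short computation using the explicit shape of the forms $\omega_i = d\alpha_i/\alpha_i$. First I would recall that a section of the quotient bundle $M_G \times_G U$ over $M_G/G$ is the same thing as a $G$-equivariant map $s\colon M_G \to U$ with $s(w\cdot p) = \rho(w)\, s(p)$ for all $w\in G$. Writing $\phi_w\colon M_G \to M_G$ for the biholomorphism $p \mapsto w\cdot p$ (which does preserve $M_G$, since $G$ permutes the removed hyperplanes $\mathcal{A}$), the operator $\nabla = d + \Omega$ carries equivariant sections to equivariant $U$-valued $1$-forms, and hence descends to the quotient, precisely when $\Omega$ satisfies the invariance identity $\phi_w^*\,\Omega = \rho(w)\,\Omega\,\rho(w)^{-1}$ for every $w\in G$. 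A one-line check using $s\circ\phi_w = \rho(w)\,s$ shows this identity is exactly what is required (comparing $\phi_w^*(\nabla s)$ with $\rho(w)\,\nabla s$), so the whole proposition reduces to verifying it from the hypothesis $\rho(w)\, X_i\, \rho(w)^{-1} = X_{w(i)}$.

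The key geometric input is the transformation law $\phi_w^*\,\omega_i = \omega_{w^{-1}(i)}$. To obtain it I would note that the linear form $\alpha_i \circ w$ vanishes exactly on $w^{-1}H_i = H_{w^{-1}(i)}$, and is therefore a nonzero constant multiple of $\alpha_{w^{-1}(i)}$; since the logarithmic form $d\alpha/\alpha$ is unchanged when $\alpha$ is rescaled by a constant, pulling back $\omega_i = d\alpha_i/\alpha_i$ yields exactly $\omega_{w^{-1}(i)}$.

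With this in hand both sides of the invariance identity are immediate. On the right, $\rho(w)\,\Omega\,\rho(w)^{-1} = \sum_{i\in P} \rho(w)\,X_i\,\rho(w)^{-1}\,\omega_i = \sum_{i\in P} X_{w(i)}\,\omega_i$ by hypothesis. On the left, $\phi_w^*\,\Omega = \sum_{i\in P} X_i\,\phi_w^*\,\omega_i = \sum_{i\in P} X_i\,\omega_{w^{-1}(i)}$, and reindexing via $j = w^{-1}(i)$, so that $i = w(j)$ and $j\mapsto i$ is a bijection of $P$, turns this into $\sum_{j\in P} X_{w(j)}\,\omega_j$. The two expressions agree, which establishes the identity and hence the proposition.

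Since every step is forced, there is no deep obstacle; the care needed is entirely conventional. I must fix the equivariance convention so that the conjugation and the inverse land on the correct sides (a slip here would produce $X_{w^{-1}(i)}$ in place of $X_{w(i)}$ and break the match with the hypothesis), and I must track the $w$-versus-$w^{-1}$ bookkeeping in the pullback $\phi_w^*\,\omega_i = \omega_{w^{-1}(i)}$ and in the subsequent reindexing, so that the permutation of $P$ appearing on the two sides coincides. These are the only places where an error could creep in.
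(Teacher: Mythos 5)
Your proof is correct and complete: the descent criterion via equivariant sections, the pullback law $\phi_w^*\,\omega_i=\omega_{w^{-1}(i)}$ (using that $\alpha_i\circ w$ is a constant multiple of $\alpha_{w^{-1}(i)}$ and that logarithmic forms are scale-invariant), and the final reindexing all check out, with the $w$ versus $w^{-1}$ bookkeeping handled correctly. The paper itself gives no proof of this proposition — it is stated as a standard fact with a pointer to Section 4 of Brou\'e--Malle--Rouquier — and your argument is precisely the standard verification that this citation encodes.
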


When the condition in above proposition is satisfied, we call
$\Omega$ as a $G$-invariant connection. Suppose $(E,\rho )$ is a
linear representation of $G$ . By above proposition $$\rho (\Omega
_G ) = \kappa \sum _{i\in P } (\sum _{s\in R,i(s)=i }) \mu _s \rho
(s) \omega _i $$ defines a flat connection on the bundle $M_G \times
E$. It induces a flat connection $ \bar {\Omega } _{\rho }$ on the
quotient bundle $M_G \times _G E $ because of $G$-invariance of
$\rho (\Omega _G )$. By taking monodromy a family of representations
of $A_G $ parameterized by ($\kappa $, $\mu _s $) are obtained. It
is proved in \cite{BMR} that for generic $\kappa$ these monodromy
representations factor through $H _G (\bar {\lambda})$ for suitable
$\bar {\lambda} $.

The following theorem from Marin \cite{Ma2}(Theorem 2.9 ) will be
used in the following  sections. Let $G\subset GL(V)$ be a pseudo
reflection group. Let $\mathcal {A}$, $P$, $\omega_i$, $M_G$, $P_G$,
$A_G$ be defined as in Section 1.  Suppose $I\subset V$ be a complex
line. The maximal parabolic subgroup $G_0$ of $G$ associated with
$I$ is the subgroup of $G$ formed by elements which stabilize $I$
pointwise. By Steinberg's theorem, $G_0$ is generated by reflections
$R_0$ of $G$ whose reflecting hyperplane contains $I$. We set
$\mathcal {A}_0 =\{ H_i \in \mathcal {A} | I\subset H_i \}$, and
$P_0 =\{ i\in P | I\subset H_i \}$. Let $M_0 = V-\cup _{i\in P_0 }
H_i $. Since $G_0$ is a pseudo reflection group, it has associated
braid group $A_{G_0}$ and pure braid group $P_{G_0}$. It is clear we
have identifications: $P_{G_0} \cong \pi_1 (M_0)$, $A_{G_0} \cong
\pi_1 (M_0 / G_0 )$. Following \cite{BMR}, $P_{G_0}$ and $A_{G_0}$
can be imbedded into $P_G$ and $A_G$ in the following way, whose
image are called maximal parabolic subgroup of $P_G$, respectively
$A_{G}$.

We endow $V$ with a $G$-invariant unitary form and denote the
associated norm as $\parallel \ \ \parallel$. Let $x_1 \in I$ such
that $x_1 \notin H $ for any $H\in \mathcal {A}\setminus \mathcal
{A}_0$. There exists $\epsilon >0$ such that, for all $x\in V$ with
$\parallel x-x_1 \parallel \leq \epsilon$ we have $x\notin H$ for
all $H\in \mathcal {A}\setminus \mathcal {A}_0$. Let $D=\{ x\in V |
\parallel x-x_1 \parallel \leq \epsilon \}$. It is easy to see the
natural morphism $\pi_1 (M_G \cap D ) \rightarrow \pi_1 (M_0 )$ is
an isomorphism, hence the natural inclusion $\pi_1 (M_G \cap D
)\rightarrow \pi_1 (M_G)$ defines an embedding $P_{G_0} \rightarrow
P_G$. Since $D$ is setwise stabilized by $G_0$, this embedding
extends to an embedding $A_{G_0} \rightarrow A_G$. It is proved in
\cite{BMR} that such embeddings are well-defined up to
$P_G$-conjugation.

 Now suppose on a bundle $M_G \times E$
there is a flat connection $\Omega =\kappa \sum_{i\in P} X_i
\omega_i $. Denote the monodromy representation of $P_G$ resulted
from $\Omega$ as $\rho$. If $\Omega$ is $G$-invariant, denote the
monodromy representation of $A_G$ resulted from $\Omega$ as
$\tilde{\rho}$. Looking $P_{G_0}$, $A_{G_0}$ as parabolic subgroups
of $P_G$, $A_G$, we obtain representations of $P_{G_0}$ and
$A_{G_0}$ by restriction of $\rho$ and $\tilde{\rho}$ respectively.
On the other hand, we define a connection on $M_0$: $\Omega _0
=\kappa \sum_{i: I\subset H_i } X_i \omega_i $. An simple discussion
by using Theorem 3.1 of Section 3 shows $\Omega_0 $ is also flat. We
denote the monodromy representation of $P_{G_0}$ resulted from
$\Omega_0$ as $\rho_0$. When $\Omega$ is $G$-invariant, then
$\Omega_0$ is $G_0$-invariant. In these cases we denote the
monodromy representation of $A_{G_0 }$ resulted from $\Omega_0$ as
$\tilde{\rho}_0$. The following theorem is proved in Marin
\cite{Ma2} (Theorem 2.9).

\begin{thm}
For generic $\kappa$, the $P_{G_0}$ representation $\rho_0$ is
isomorphic to the restriction of $\rho$. When $\Omega$ is
$G$-invariant, the $A_{G_0}$ representation $\tilde{\rho}_0$ is
isomorphic to the restriction of $\tilde{\rho}$.
\end{thm}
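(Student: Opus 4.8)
The plan is to reduce the statement to a purely local computation on the small disk $D$ around $x_1$ and then strip off the ``extra'' hyperplanes by a rescaling limit. First I would record the local shape of $\Omega$ on $D$: by the choice of $\epsilon$, the only hyperplanes meeting $D$ are those of $\mathcal{A}_0$, so on $D$ one can write $\Omega|_D = \Omega_0|_D + \Theta$, where $\Theta = \kappa \sum_{j\notin P_0} X_j \omega_j$ is a holomorphic, pole-free, matrix-valued $1$-form on $D$ (each $\alpha_j$ with $j\notin P_0$ is nonvanishing on $D$). Since the isomorphism $\pi_1(M_G\cap D)\cong \pi_1(M_0)$ realizes precisely the parabolic subgroup $P_{G_0}\subset P_G$, and since $\Omega_0$ is pulled back from the transverse arrangement (so its monodromy on $M_0\cap D$ already equals $\rho_0$), proving $\rho_0\cong \rho|_{P_{G_0}}$ amounts to comparing the monodromy of the flat connection $\Omega_0+\Theta$ with that of $\Omega_0$, both read off on loops inside $D$.

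Next I would exploit the homogeneity of the logarithmic forms. Splitting $V = I\oplus I^{\perp}$ with coordinates $(z,w)$, the forms $\omega_i$ with $i\in P_0$ depend only on $w$ and are invariant under the transverse scaling $\psi_\lambda(z,w)=(z,\lambda w)$, whereas for $j\notin P_0$ one has $\alpha_j = a_j z + \ell_j(w)$ with $a_j\neq 0$, so $\psi_\lambda^{*}\omega_j \rightarrow \frac{dz}{z}$ uniformly on a fixed disk around $x_1$ as $\lambda\rightarrow 0$. Because $\psi_\lambda$ is an open embedding that is homotopic, through such embeddings, for all small $\lambda$, the monodromy of $\psi_\lambda^{*}\Omega = \Omega_0 + \psi_\lambda^{*}\Theta$ is independent of $\lambda$ and equals $\rho|_{P_{G_0}}$. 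Letting $\lambda\rightarrow 0$ and invoking continuous dependence of solutions of the horizontal ODE on the uniformly convergent connection coefficients along a fixed loop, this monodromy converges to that of the limit connection $\Omega_\infty = \Omega_0 + \kappa B\,\frac{dz}{z}$ with $B=\sum_{j\notin P_0}X_j$ (flatness passes to the limit, forcing $[X_i,B]=0$). Finally, every class in $\pi_1(M_0)$ is represented by a loop on which $z$ is constant, so $\frac{dz}{z}$ pulls back to $0$ and the extra term drops out: the monodromy of $\Omega_\infty$ coincides with that of $\Omega_0$, that is, with $\rho_0$. Chaining the three identifications gives $\rho|_{P_{G_0}}\cong \rho_0$.

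For the equivariant statement I would run the identical argument on the quotient. Here $\Omega_0$ is $G_0$-invariant, $D$ is setwise $G_0$-stable, and $\psi_\lambda$ commutes with the linear $G_0$-action, which fixes $I$ pointwise and preserves $I^{\perp}$; hence the whole rescaling descends to $M_0/G_0$ and yields $\tilde{\rho}_0 \cong \tilde{\rho}|_{A_{G_0}}$.

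The main obstacle I anticipate is the limiting step: one must justify that $\lambda\rightarrow 0$ does not alter the isomorphism class of the monodromy, and this is exactly where genericity of $\kappa$ should enter. It rules out resonances among the eigenvalues of the residue operators $\kappa X_i$, so that the local monodromy around each hyperplane of $\mathcal{A}_0$ is rigid under the pole-free perturbation $\Theta$ and the limit of the monodromies equals the monodromy of the limit. Making the uniform-convergence estimate near the remaining singular locus precise, so that the comparison loops stay a fixed distance from the $\mathcal{A}_0$-hyperplanes while $\psi_\lambda^{*}\Theta$ remains bounded, is the technical heart; the rest of the argument is formal.
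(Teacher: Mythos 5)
First, a point of reference: the paper itself contains no proof of this statement --- it is imported verbatim from Marin [Ma2], Theorem 2.9, and the surrounding text in Section 2 only constructs the parabolic embeddings $P_{G_0}\rightarrow P_G$ and $A_{G_0}\rightarrow A_G$. So your proposal must be judged against Marin's argument rather than anything in this paper. Your geometric skeleton is sound and is essentially the known degeneration argument: on $D$ one has $\Omega=\Omega_0+\Theta$ with $\Theta$ pole-free, one rescales so that the hyperplanes of $\mathcal{A}_0$ are preserved while the others recede, and one passes to the limit. (A cleaner variant than your transverse scaling: the dilation $\psi_u(x)=x_1+u(x-x_1)$ centered at $x_1$ satisfies $\psi_u^{*}\omega_i=\omega_i$ for $i\in P_0$ and $\psi_u^{*}\omega_j\rightarrow 0$ for $j\notin P_0$, because $\alpha_j(x_1)\neq 0$; the limit connection is then $\Omega_0$ itself, so your auxiliary residue $B\,\frac{dz}{z}$ and the $z$-constant-loop step become unnecessary. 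Both scalings are $G_0$-equivariant, so the remark applies to the $A_{G_0}$ statement as well.)

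The genuine gap is in the final step, and you have misdiagnosed where the difficulty sits. ``Limit of the monodromies $=$ monodromy of the limit'' is automatic from continuous dependence of solutions of linear ODEs on uniformly convergent coefficients; no genericity is needed there. What your argument actually produces is a family $R_\lambda$, $0<\lambda\leq 1$, of representations of $P_{G_0}$, each isomorphic to $\rho|_{P_{G_0}}$, converging pointwise as $\lambda\rightarrow 0$ to $\rho_0$. That alone does \emph{not} give $\rho_0\cong\rho|_{P_{G_0}}$: the conjugating matrices $g_\lambda$ with $R_\lambda=g_\lambda R_1 g_\lambda^{-1}$ may blow up, and the isomorphism class can jump in the limit. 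The standard example is the family of representations of $\mathbb{Z}$ sending $1\mapsto\left(\begin{smallmatrix}1&\lambda\\ 0&1\end{smallmatrix}\right)$: all isomorphic for $\lambda\neq 0$, with non-isomorphic (trivial) limit. This jumping is a global phenomenon about orbit closures in the representation variety, and your proposed remedy --- non-resonance of the eigenvalues of the residues $\kappa X_i$, giving local rigidity of the monodromy around each $H_i$ --- is purely local and does not exclude it. What the limit argument honestly proves is only that $\rho_0$ and $\rho|_{P_{G_0}}$ have equal characters, hence isomorphic semisimplifications. The missing ingredient, which is the true content of ``for generic $\kappa$,'' is that both representations are semisimple for $\kappa$ outside a countable set; this requires a separate argument (in Marin's treatment it is where the real work lies, resting on unitarizability and analyticity in $\kappa$), and nothing in your proposal supplies it. Granted semisimplicity, equality of characters does imply isomorphism, and your argument --- including its $G_0$-equivariant version for $A_{G_0}$ --- closes.
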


\section{Flat connections for BMW algebras }

We begin with some knowledge for hyperplane arrangements. Let $E$ be
a complex linear space. An hyperplane arrangement (or arrangement
simply ) in $E$ means a finite set of hyperplanes contained in $E$.
Let $\mathscr{A} =\{ H_i \} _{i\in I}$ be an arrangement in $E$, we
denote the complementary space $ E- \cup _{i\in I} H_i $ as
$M_{\mathscr{A}}$.  Intersection of any subset of $\mathscr{A}$ is
called an edge. If $L$ is an edge of $\mathscr{A}$, define
$\mathscr{A} _L = \{ H_i \in \mathscr{A} | L\subset H_i \} =\{H_i \}
\ and\ I_L = \{ i\in I | L\subset H_i  \} .$

For every $i\in I$, chose a linear form $f _i $ with kernel $H_i $.
Set $\omega _i = d\log f_i $, which is a holomorphic closed 1-form
on $M_{\mathscr{A}}$.  Consider the formal connection $\Omega =
\kappa \sum _{i\in I} X_i \omega _i $.  Here $X_i $ are linear
operators to be determined. When we take $X_i $'s as endomorphisms
of some linear space $E$, then $\Omega $ is realized as a connection
on the bundle $M_{\mathscr{A}} \times E$.  We have the following
theorem of Kohno.

\begin{thm}[Kohno \cite{Ko1}]
 The formal connection $\Omega $ is flat if and only if:

 $[ X_i ,\sum _{j\in I_L } X_j ] =0$ for any codimension  2  edge  $L$  of  $\mathscr{A}$,and for any
$i\in I_L$. Where $[A,B]$ means $AB-BA$.
\end{thm}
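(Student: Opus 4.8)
The plan is to compute the curvature directly and then exploit the linear relations among the holomorphic 2-forms $\omega_i\wedge\omega_j$ coming from the arrangement. Since each $\omega_i=d\log f_i$ is a closed form and each $X_i$ is a constant operator, we have $d\Omega=\kappa\sum_i(dX_i)\wedge\omega_i+\kappa\sum_iX_i\,d\omega_i=0$. Hence the curvature of $\Omega$ is just (up to sign) $\Omega\wedge\Omega$, and flatness is equivalent to $\Omega\wedge\Omega=0$. Expanding and antisymmetrizing,
$$\Omega\wedge\Omega=\kappa^2\sum_{i,j\in I}X_iX_j\,\omega_i\wedge\omega_j=\kappa^2\sum_{i<j}[X_i,X_j]\,\omega_i\wedge\omega_j,$$
using $\omega_i\wedge\omega_j=-\omega_j\wedge\omega_i$. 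So the task reduces to determining when this 2-form vanishes identically on $M_{\mathscr{A}}$.

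Next I would organize the sum according to codimension 2 edges. Any two distinct hyperplanes $H_i,H_j$ meet in an edge $L=H_i\cap H_j$, which (in the generic case) has codimension 2, and then $\{i,j\}\subset I_L$; each unordered pair thus belongs to exactly one codimension 2 edge. Therefore
$$\sum_{i<j}[X_i,X_j]\,\omega_i\wedge\omega_j=\sum_{L}\ \sum_{\substack{i<j\\ i,j\in I_L}}[X_i,X_j]\,\omega_i\wedge\omega_j,$$
where the outer sum runs over codimension 2 edges $L$. The key structural input is the Brieskorn/Orlik--Solomon description of these logarithmic forms: the products $\omega_i\wedge\omega_j$ attached to \emph{distinct} codimension 2 edges are linearly independent, so the vanishing of the total 2-form decouples into the vanishing of each inner sum $S_L:=\sum_{i<j,\,i,j\in I_L}[X_i,X_j]\,\omega_i\wedge\omega_j$ separately.

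It then remains to analyze a single pencil. Fixing $L$, the hyperplanes $\{H_i\}_{i\in I_L}$ form a rank-2 arrangement, for which the Arnold relation holds as an identity of forms: $\omega_a\wedge\omega_b+\omega_b\wedge\omega_c+\omega_c\wedge\omega_a=0$ for any $a,b,c\in I_L$. Choosing a distinguished index $p\in I_L$, this lets me rewrite every $\omega_i\wedge\omega_j$ ($i,j\ne p$) as $\omega_p\wedge\omega_j-\omega_p\wedge\omega_i$, so that the forms $\{\omega_p\wedge\omega_k\}_{k\in I_L,\,k\ne p}$ serve as a basis for the span of the pencil's 2-forms. Substituting into $S_L$ and collecting the coefficient of each basis element $\omega_p\wedge\omega_k$, a routine reindexing yields the coefficient $\sum_{i\in I_L}[X_i,X_k]=[\,\sum_{i\in I_L}X_i,\,X_k\,]$. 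Setting these to zero for all $k\ne p$ gives $[X_k,\sum_{i\in I_L}X_i]=0$, and the case $k=p$ follows since $[\sum_iX_i,\sum_kX_k]=0$ automatically. This is exactly Kohno's condition, and reversing the steps gives the converse.

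The main obstacle is the passage in the second paragraph: justifying that contributions from different codimension 2 edges cannot cancel one another. This rests on the Brieskorn lemma identifying the relations among the forms $\omega_i\wedge\omega_j$ with those generated by the Arnold relations edge by edge, and is where the real geometry of the arrangement enters; the per-pencil computation in the last paragraph is then purely combinatorial. I would either cite this independence directly or establish it by restricting the forms to a transverse 2-plane through a generic point of each edge, where all forms associated to other edges become regular and the residue picks out the single relevant pencil.
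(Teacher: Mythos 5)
The paper contains no proof of this statement to compare against: Theorem 3.1 is quoted from Kohno \cite{Ko1} and is used as a black box throughout (e.g.\ in Propositions 3.1, 5.1 and Theorem 4.2), so your proposal can only be judged on its own terms. On those terms it is correct, and it is in fact the standard argument behind Kohno's theorem. The reduction of flatness to $\Omega\wedge\Omega=0$ (since the $\omega_i$ are closed and the $X_i$ constant), the antisymmetrization to $\kappa^2\sum_{i<j}[X_i,X_j]\,\omega_i\wedge\omega_j$, the decomposition of the span of the 2-forms as a direct sum over codimension-2 flats, and the per-pencil elimination via the Arnold relation are exactly the ingredients one needs. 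Your coefficient extraction checks out: with $p\in I_L$ fixed, the coefficient of $\omega_p\wedge\omega_k$ in $S_L$ is $\sum_{i\in I_L,\,i\neq k}[X_i,X_k]=[\sum_{i\in I_L}X_i,\,X_k]$, so vanishing of $S_L$ is equivalent to Kohno's condition, with the case $k=p$ following for free as you note. You also correctly identify the one genuinely nontrivial input, namely that contributions from distinct codimension-2 edges cannot cancel (Brieskorn's lemma), and your suggested justification by restricting to a 2-plane transverse to $L$ at a generic point, where the forms attached to all other edges stay regular, is the standard way to prove it; a fully self-contained writeup would need to carry this out or cite it precisely. One small caveat: your claim that every pair of distinct hyperplanes meets in a codimension-2 edge uses centrality of the arrangement (automatic for the reflection arrangements this paper cares about, so no genericity is actually involved); for a genuinely affine arrangement, parallel pairs admit no Arnold relation and force the additional condition $[X_i,X_j]=0$, which the statement as quoted omits.
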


The following lemma can be proved directly by using graphs.

\begin{lem}
In the Brauer algebra $B _n (\tau )$, let $s_{i,j} \in S_n \subset B
_n (\tau )$ be $(i,j)$ permutation., let $e_{i,j}$ be as in
introduction. we have
\begin{itemize}
\item[(1)] $e_{i,j} s_{k,l} =s_{k,l} e_{i,j} $ if $ \{i,j\} \cap
\{k,l\}=\varnothing ;$
\item[(2)] $e_{i,j} e_{k,l} =e_{k,l} e_{i,j}$
if $ \{i,j\} \cap \{k,l\}=\varnothing ;$
\item[(3)] $e_{i,j} = e_{j,i};$
\item[(4)] $e_{i,j} e_{i,k} = s_{j,k} e_{i,k} = e_{i,j} s_{j,k}$ ,for
any different $i,j,k$;
\item[(5)] $ e_{i,j} ^2 = \tau e_{i,j}$ , for
any $ i\neq j$;
\item[(6)] $ s_{i,j} e_{j,k} = e_{i,k} s_{i,j}$.
\end{itemize}
\end{lem}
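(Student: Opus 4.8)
The plan is to work entirely inside the diagrammatic model of $B_n(\tau)$, exactly as the author suggests. Recall that a linear basis of $B_n(\tau)$ is given by Brauer diagrams: a diagram has a top row of nodes $1,\dots,n$ and a bottom row of nodes $1,\dots,n$, and consists of a perfect matching of these $2n$ nodes by strands. The product $D_1 D_2$ is formed by placing $D_1$ above $D_2$, gluing the bottom row of $D_1$ to the top row of $D_2$, reading off the induced matching of the outer nodes, and multiplying by $\tau^c$, where $c$ is the number of closed loops created in the middle. In this language $s_{i,j}$ is the diagram whose only non-vertical strands join top-$i$ to bottom-$j$ and top-$j$ to bottom-$i$ (all other strands vertical), while $e_{i,j}$ is the diagram with a cup joining top-$i$ to top-$j$, a cap joining bottom-$i$ to bottom-$j$, and vertical strands elsewhere.

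The observation that organizes the whole proof is locality: in every one of the six identities the strands that are not vertical involve only the indices appearing in that identity, so after deleting the untouched through-strands it suffices to verify each relation on the two or three relevant strands. Relations $(1)$ and $(2)$ then follow at once, since when $\{i,j\}\cap\{k,l\}=\varnothing$ the non-vertical strands of the two factors occupy disjoint columns; composing in either order produces the same matching and creates no closed loop. Relation $(3)$ is immediate because the defining matching of $e_{i,j}$ (a cup on top and a cap on bottom, both on columns $i,j$) is manifestly symmetric under interchanging $i$ and $j$. Relation $(5)$ is the one place a scalar appears: stacking $e_{i,j}$ on $e_{i,j}$ makes the cap of the upper factor meet the cup of the lower factor in columns $i,j$, forming exactly one closed loop, while the surviving outer strands reproduce $e_{i,j}$; hence $e_{i,j}^2=\tau e_{i,j}$.

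The substance of the computation is in the three-index relations $(4)$ and $(6)$. For $(4)$ I would draw $e_{i,j}$ over $e_{i,k}$ on the three columns $i,j,k$ and trace the composite matching through the glued middle row; the cup and cap of the two factors reconnect to give the cup of $e_{i,k}$ on top together with a single through-strand joining columns $j,k$ that is exactly the strand of $s_{j,k}$, and no closed loop is formed. The same bookkeeping applied to $e_{i,j}$ over $s_{j,k}$ yields the identical diagram, giving the chain $e_{i,j}e_{i,k}=s_{j,k}e_{i,k}=e_{i,j}s_{j,k}$. Relation $(6)$ is verified the same way: composing $s_{i,j}$ over $e_{j,k}$ and composing $e_{i,k}$ over $s_{i,j}$ both produce the diagram with a cup on top joining columns $i,k$, a cap on the bottom joining columns $j,k$, and one through-strand, so the two products agree.

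There is no serious obstacle here; the only thing that needs care is the combinatorial bookkeeping of the glued strands in $(4)$ and $(6)$, and in particular the verification that these compositions create no closed loop (so that no spurious factor of $\tau$ appears), in contrast with $(5)$ where exactly one loop is formed. Keeping explicit track of which outer nodes each traced path connects is enough to rule this out.
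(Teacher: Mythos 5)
Your proof is correct and is exactly the approach the paper takes: the paper simply asserts that the lemma "can be proved directly by using graphs," and your diagrammatic verification (composing Brauer diagrams, tracking the induced matchings, and checking loop counts so that $\tau$ appears only in relation $(5)$) is precisely that argument carried out in full.
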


 For $1\leq i<j\leq n-1 $, define $\omega _{i,j}= d(z_i -z_j )/(z_i -z_j
)$. Consider the formal connection $\bar{\Omega} _n = \kappa \sum
_{i<j } (s_{i,j} -e_{i,j}) \omega_{i,j}$. We have

\begin{prop}[Marin\cite{Ma1}]
The formal connection $\bar{\Omega} _n$ is flat and $S_n $
invariant.

\end{prop}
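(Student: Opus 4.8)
The plan is to verify the two assertions—flatness and $S_n$-invariance—separately, using the tools already assembled in the excerpt. For flatness I would invoke Kohno's criterion (Theorem 3.1): the connection $\bar{\Omega}_n = \kappa \sum_{i<j} (s_{i,j} - e_{i,j}) \omega_{i,j}$ is flat if and only if, for every codimension 2 edge $L$ of the braid arrangement and every hyperplane index $(i,j) \in I_L$, the operator $X_{i,j} = s_{i,j} - e_{i,j}$ commutes with the sum $\sum_{(k,l) \in I_L} X_{k,l}$. So the first task is to classify the codimension 2 edges of the arrangement $\{ H_{i,j} = \ker(z_i - z_j) \}$. These come in exactly two combinatorial types: the ``disjoint'' type $H_{i,j} \cap H_{k,l}$ with $\{i,j\} \cap \{k,l\} = \varnothing$, and the ``triple'' type $H_{i,j} \cap H_{j,k}$ where the three hyperplanes $H_{i,j}, H_{j,k}, H_{i,k}$ all contain the common edge $\{z_i = z_j = z_k\}$.

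For the disjoint type, $I_L = \{(i,j),(k,l)\}$ and I must check $[X_{i,j}, X_{i,j} + X_{k,l}] = 0$, which reduces to $[X_{i,j}, X_{k,l}] = 0$. Expanding $X_{i,j} = s_{i,j} - e_{i,j}$, this follows term by term from the commutativity facts already recorded in Lemma 3.1: $s_{i,j}$ commutes with $s_{k,l}$ (disjoint transpositions), $e_{i,j}$ commutes with $s_{k,l}$ and with $e_{k,l}$ by parts (1) and (2), and $s_{i,j}$ commutes with $e_{k,l}$ again by (1). For the triple type, $I_L = \{(i,j),(j,k),(i,k)\}$, and I must verify that $X_{i,j}$ commutes with $S := X_{i,j} + X_{j,k} + X_{i,k}$. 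Since $[X_{i,j}, X_{i,j}] = 0$ trivially, this amounts to showing $[X_{i,j}, X_{j,k} + X_{i,k}] = 0$. This is the computational heart of the proof: expanding $S$ in terms of the $s$'s and $e$'s, the $s$-only part is $s_{i,j} + s_{j,k} + s_{i,k}$, which is $S_3$-invariant and hence central in the relevant subalgebra, so $s_{i,j}$ commutes with it; the genuine work is to show the mixed and $e$-only cross terms cancel, for which I would use the key Brauer relations in Lemma 3.1 parts (4) and (6), namely $e_{i,j} e_{i,k} = s_{j,k} e_{i,k} = e_{i,j} s_{j,k}$ and $s_{i,j} e_{j,k} = e_{i,k} s_{i,j}$, together with part (5) giving $e_{i,j}^2 = \tau e_{i,j}$.

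For $S_n$-invariance I would apply Proposition 2.2: it suffices to check that $\sigma X_{i,j} \sigma^{-1} = X_{\sigma(i),\sigma(j)}$ for every $\sigma \in S_n$ and every pair $(i,j)$, where the $S_n$-action on the index set permutes the hyperplanes via $\sigma(H_{i,j}) = H_{\sigma(i),\sigma(j)}$. Since conjugation by $\sigma$ sends the transposition $s_{i,j}$ to $s_{\sigma(i),\sigma(j)}$, and sends $e_{i,j}$ to $e_{\sigma(i),\sigma(j)}$ (this is the graphical/relational content of relation (6) in Lemma 3.1 extended to all permutations), the equivariance of $X_{i,j} = s_{i,j} - e_{i,j}$ is immediate. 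One should note that $\omega_{i,j}$ is also correctly permuted, matching the definition of a $G$-invariant connection in the sense used after Proposition 2.2.

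\textbf{The main obstacle} will be the triple-edge flatness computation: organizing the expansion of $[X_{i,j}, X_{j,k} + X_{i,k}]$ so that the eight or so cross terms cancel cleanly. I expect the cancellation to hinge on pairing each mixed term $s_{i,j} e_{j,k}$ against its image $e_{i,k} s_{i,j}$ via Lemma 3.1(6), and each $e$-product term against the corresponding $s$-times-$e$ expression via Lemma 3.1(4); the symmetry of the three indices should make the bookkeeping manageable rather than genuinely hard, but it is the one step where a careless sign or a misapplied relation would break the argument. Everything else—the edge classification and the equivariance check—is routine once Lemma 3.1, Theorem 3.1, and Proposition 2.2 are in hand.
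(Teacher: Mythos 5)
Your proposal is correct and follows essentially the same route as the paper's proof: Kohno's criterion (Theorem 3.1), the same two-case classification of codimension-2 edges of the braid arrangement, the vanishing of the pure-transposition part via the class-sum identity, and the cancellation of mixed terms via Lemma 3.1 parts (4) and (6). The only differences are cosmetic: you spell out the $S_n$-equivariance through Proposition 2.2 where the paper simply calls it evident, and you cite Lemma 3.1(5), which is never actually needed since no $e_{i,j}^2$ term arises in the commutators.
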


\begin{proof}
We certify $\bar{\Omega} _n$ satisfies conditions of theorem 3.1.
For the arrangement $\mathscr{A} _n $,there are then following two
type of codimension 2 edges

Case 1. $L= H_{i,j} \cap H_{k,l}$, $\{i,j\}\cap
\{k,l\}=\varnothing$.

Whence $\mathscr{A} _L =\{H_{i,j} ,H_{k,l}\}$. Now we have $s_{a,b}
s_{c,d} =s_{c,d} s_{a,b}$  and $e_{a,b} e_{c,d} =e_{c,d} e_{a,b}$ if
$\{a,b \}\cap \{c,d \}=\varnothing .$ They are most easily seen by
using graphs. so $[s_{i,j} -e_{i,j} ,s_{k,l} -e_{k,l}]=0.$ Which
gives $[s_{i,j} -e_{i,j} ,s_{i,j} -e_{i,j} +s_{k,l} -e_{k,l}]=0=
 [s_{k,l} -e_{k,l} ,s_{i,j} -e_{i,j} +s_{k,l} -e_{k,l}].$

Case 2. $L=H_{i,j} \cap H_{j,k}$, where $i,j,k$ are different. In
this case $\mathscr{A} _L = \{ H_{i,j} ,H_{j,k} ,H_{i,k} \}$,
\begin{align*}
&[s_{i,j} - e_{i,j} ,  s_{i,k} -e_{i,k} +s_{j,k} -e_{j,k}] \\
 =&[s_{i,j},s_{i,k} +s_{j,k}] +(-e_{i,j} s_{i,k}
+e_{i,j}
e_{j,k})+(s_{i,k}e_{i,j} -e_{j,k} e_{i,j} ) \\
   +&
 (-e_{i,j} s_{j,k} +e_{i,j} e_{i,k} ) + (s_{j,k} e_{i,j}
-e_{i,k}e_{i,j}) +[s_{i,j} ,-e_{i,k}-e_{j,k}]  \\
=& (-e_{i,j} s_{i,k} +e_{i,j}e_{j,k})+(s_{i,k}e_{i,j} -e_{j,k}
e_{i,j} ) \\
 +&(-e_{i,j} s_{j,k} +e_{i,j} e_{i,k} ) + (s_{j,k}
e_{i,j}
-e_{i,k}e_{i,j})  +[s_{i,j} ,-e_{i,k}-e_{j,k}]  \\
 =& (-e_{i,j}s_{i,k} +e_{i,j}
e_{j,k})+(s_{i,k}e_{i,j} -e_{j,k} e_{i,j} )
 +(-e_{i,j} s_{j,k} +e_{i,j} e_{i,k} ) \\
  +&(s_{j,k} e_{i,j}-e_{i,k}e_{i,j})
 =0.
\end{align*}

The second equality is because $s_{i,j} s_{i,k} +s_{i,j} s_{j,k} =
s_{j,k} s_{i,j} +s_{i,k} s_{i,j}. $ For the third equality use Lemma
3.1, (6). For the fourth equality use lemma 3.1, (4). $G$-invariance
of $\bar{\Omega} _n$ is evident.

\end{proof}

Let $(E,\rho)$ be a finite dimensional representation of $B _n (m
)$. By proposition 3.1, the connection
$$ \rho (\bar{\Omega} _n)= \kappa \sum _{i<j }
( \rho (s_{i,j} ) -\rho (e_{i,j}) ) \omega_{i,j}$$ induce a flat
connection on the bundle $Y_n \times _{S_n} E $, which is a linear
bundle on $X_n$. Denote the resulted monodromy representation of
$\pi _1 ( Y_n / S_n ) \cong B_n $ as $\bar{\rho}$. The following
theorem can be found in \cite{Ma1}.

\begin{thm}[Marin\cite{Ma1}]
For generic $\kappa$, the monodromy representations $\bar{\rho}$ of
$B_n$ constructed above factor through $B _n (\tau , l)$, for $\tau
= \frac{q^{1-m} - q^{m-1} + q^{-1} -q}{q^{-1} -q}$, $l=q^{m-1} $.
Where $q= \exp \kappa \pi \sqrt{-1}$.
\end{thm}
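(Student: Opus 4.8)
The plan is to verify that the monodromy matrices $M_i=\bar\rho(\sigma_i)$ of the standard half-twist generators $\sigma_1,\dots,\sigma_{n-1}$ of $B_n=\pi_1(Y_n/S_n)$ satisfy the defining relations of $B_n(\tau,l)$. Since the BMW relation $l(X_i^2+(q^{-1}-q)X_i-1)=(q^{-1}-q)E_i$ expresses $E_i=\frac{l}{q^{-1}-q}(X_i^2+(q^{-1}-q)X_i-1)$ as a polynomial in $X_i$, the algebra $B_n(\tau,l)$ is a quotient of the group algebra $\mathbb{C}B_n$; hence it suffices to check that the $M_i$ obey those relations. Because $\rho(\bar\Omega_n)$ is flat (Proposition 3.1), $\bar\rho$ is a genuine representation of $B_n$, so the braid relations and the far commutations $M_iM_j=M_jM_i$ for $|i-j|\ge 2$ — and therefore also the far commutations of the $E_i$, which are polynomials in the $M_i$ — hold automatically. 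Every remaining relation is supported on at most three consecutive strands, so the problem reduces to the cases $n=2$ and $n=3$.

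First I would run the local ($n=2$) computation, which also fixes the parameters. Near a generic point of the diagonal $H_{i,i+1}=\{z_i=z_{i+1}\}$ the connection $\rho(\bar\Omega_n)$ has a single logarithmic pole with residue $\kappa A_i$, $A_i=\rho(s_{i,i+1})-\rho(e_{i,i+1})$, all other summands being holomorphic there; since $\sigma_i$ is the positive half-twist exchanging the two points, its monodromy is $M_i=\rho(s_{i,i+1})\exp(\pi\sqrt{-1}\,\kappa A_i)$. The Brauer relations $s_{i,i+1}^2=1$, $s_{i,i+1}e_{i,i+1}=e_{i,i+1}s_{i,i+1}=e_{i,i+1}$, $e_{i,i+1}^2=m\,e_{i,i+1}$ split $E$ under $\langle s_{i,i+1},e_{i,i+1}\rangle$ through the orthogonal idempotents $\frac12(1-s_{i,i+1})$, $\frac1m e_{i,i+1}$, $\frac12(1+s_{i,i+1})-\frac1m e_{i,i+1}$, on which $A_i$ acts by $-1$, $1-m$, $1$ and commutes with $\rho(s_{i,i+1})$. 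Hence $M_i$ is diagonalizable with eigenvalues $-q^{-1}$, $q^{1-m}$, $q$ (where $q=\exp(\kappa\pi\sqrt{-1})$) and satisfies the cubic $(M_i-q)(M_i+q^{-1})(M_i-q^{1-m})=0$. Comparing with the BMW characteristic equation identifies the $e$-eigenvalue with $l^{-1}$, so $l=q^{m-1}$; defining $E_i=\frac{l}{q^{-1}-q}(M_i^2+(q^{-1}-q)M_i-1)$ makes $E_i$ the rescaled spectral projector onto the $q^{1-m}$-eigenspace, whence $X_iE_i=l^{-1}E_i$ is immediate and evaluating $E_i^2$ on that eigenspace gives $E_i^2=\tau E_i$ with $\tau=\frac{q^{1-m}-q^{m-1}+q^{-1}-q}{q^{-1}-q}$, exactly the stated values.

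The remaining relations are the genuinely two-generator ones, e.g. $E_iX_{i+1}E_i=lE_i$, $E_iX_{i-1}E_i=lE_i$, and $X_iX_{i+1}E_i=E_{i+1}X_iX_{i+1}$, and I expect these to be the main obstacle, since they cannot be read off from a single residue. Each involves only strands $i,i+1,i+2$, so I would invoke the restriction theorem quoted above (Theorem 2.9 of \cite{Ma2}): taking a line $I=\mathbb{C}v$ with $v_i=v_{i+1}=v_{i+2}$ and the other coordinates distinct, the pointwise stabilizer is $S_3$, the associated parabolic braid group is $B_3\hookrightarrow B_n$, and the restriction of $\bar\rho$ to it is the monodromy of the restricted connection $\kappa[(s_{12}-e_{12})\omega_{12}+(s_{13}-e_{13})\omega_{13}+(s_{23}-e_{23})\omega_{23}]$ on the configuration space of three points. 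This reduces everything to $n=3$.

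For the $n=3$ case I would finish in one of two ways. Option (i): solve the associated rank-one KZ system directly — modulo translation and scaling the base is $\mathbb{C}\setminus\{0,1\}$ and the equation is of hypergeometric type, so the braid generators' monodromy is given by explicit connection (Drinfeld associator) matrices, which one checks satisfy the adjacent relations for the above $\tau,l$. Option (ii), which I find more robust: argue by genericity. The image algebra is a flat family in $\kappa$ whose classical limit recovers the Brauer algebra $B_3(m)$ and which already satisfies the cubic relation; for generic $\kappa$ both $B_3(m)$ and $B_3(\tau,l)$ are semisimple with matching dimensions and the same decomposition into irreducibles, and $B_3(\tau,l)$ is the universal deformation of $B_3(m)$ carrying that cubic, so the outstanding relations are forced and $\bar\rho|_{B_3}$ factors through $B_3(\tau,l)$. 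Assembling the three steps, the $M_i$ satisfy all defining relations of $B_n(\tau,l)$, and $\bar\rho$ factors through $B_n(\tau,l)$ for $l=q^{m-1}$ and the stated $\tau$; the decisive difficulty remains the adjacent-strand relations, which need genuine rank-two monodromy information beyond the purely local residue data.
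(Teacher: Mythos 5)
The first thing to note is that the paper does not actually prove this statement: Theorem 3.2 is imported verbatim from Marin \cite{Ma1} and used later as a black box. The place where your strategy does appear in the paper is the proof of Theorem 5.3, which reduces the simply-laced case to rank $\le 2$ parabolic subgroups via the restriction theorem (Theorem 2.1, i.e. Theorem 2.9 of \cite{Ma2}) and then invokes Theorem 3.2 itself for the base cases $B_2$ and $B_3$. Your reduction steps (flatness giving the braid relations, locality of the remaining relations, parabolic restriction down to two and three strands) therefore reproduce the paper's own reduction machinery, and your two-strand computation is correct and correctly pins down the parameters: the residue $\kappa(\rho(s_{i,i+1})-\rho(e_{i,i+1}))$ has eigenvalues $-1$, $1-m$, $1$ on the three idempotents you list, so $M_i$ has eigenvalues $-q^{-1}$, $q^{1-m}$, $q$ (up to conjugation, for non-resonant $\kappa$, which suffices for spectral conclusions), forcing $l=q^{m-1}$ and $\tau=\frac{q^{1-m}-q^{m-1}+q^{-1}-q}{q^{-1}-q}$, consistent with the normalization $\frac{l-l^{-1}}{1-\tau}=q^{-1}-q$ of Table 1.

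The gap is exactly where you predicted it, in the three-strand step, and your Option (ii) does not close it. The claim that $B_3(\tau,l)$ is ``the universal deformation of $B_3(m)$ carrying that cubic, so the outstanding relations are forced'' is false: the quotient of $\mathbb{C}B_3$ by the cubic $(X-q)(X+q^{-1})(X-q^{1-m})=0$ alone is the cubic Hecke algebra (the cyclotomic Hecke algebra of the order-24 rank-two complex reflection group), which has dimension 24 for generic parameters, strictly larger than the 15-dimensional $B_3(\tau,l)$. Hence relations such as $E_iX_{i\pm 1}E_i=lE_i$ and $X_iX_{i+1}E_i=E_{i+1}X_iX_{i+1}$ are genuinely additional and cannot be forced by the cubic plus any universality statement. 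To repair a rigidity argument of this kind you would first have to prove that the monodromy image is a flat family of dimension at most $\dim B_3(m)=15$ (this is what Marin's infinitesimal-quotient/associator machinery provides) and then identify its irreducible constituents with the BMW ones by spectral data; neither step is established in your sketch. Option (i), the explicit hypergeometric computation of the rank-three monodromy, is viable but is precisely the hard content of Marin's theorem and is left unexecuted. So as written, your argument proves the eigenvalue relation and the parameter identification, but not the adjacent-strand BMW relations: in effect you have reconstructed the paper's reduction (its Theorem 5.3 argument) while still deferring, implicitly, to \cite{Ma1} for the same base case the paper defers to.
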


\section{Generalized Lawrence-Krammer Representations }

The Lawrence-Krammer representations and their generalizations play
a significant role in the theory of braid groups and Artin groups.
See Krammer \cite{Kr}, Bigelow \cite{Bi}, Cohen and Wales \cite{CW},
Digne \cite{Di},Paris \cite{Pa} and Marin \cite{Ma2} \cite{Ma3}.
Since this paper concentrate on infinitesimal level, we majorly
refer to  \cite{Ma2} \cite{Ma3}.

 Let $V$ be a
$n$-dimensional complex linear space. Let $G\subset U(V)$ be a
complex reflection group. Let $R$ be the set of reflections in $G$.
We use notations in section 2.2.

The generalized LK representations of $A_G$ of Marin are described
by certain flat connections as follows.  First, for every $H_s $, we
have a closed $1$-form $\omega _s$ on $M_G $ as in section 2.2. Then
let $V_G =\mathbb{C}\langle v_s \rangle _{s\in R}$ be a complex
linear space with a basis indexed by $R$. For every pair of elements
$s,u \in R$, define a nonnegative integer $\alpha (s,u) =\# \{ r\in
R | rur=s \} $. Chose a constant $m\in \mathbb{C}$. For any $s\in
R$, define a linear operator $t_s \in GL(V_G )$ as follows:
$$t_s \cdot v_s = m v_s , \ \ t_s \cdot v_u = v_{sus} -\alpha (s,u)v_s \ \ for\ \  s\neq u. $$

Chose another constant $k\in \mathbb{C}$. Define a connection
$\Omega _{K} = \sum _{s\in R} k\cdot t_s \omega _s $ on the trivial
bundle $V_G \times M_G $.

\textbf{Theorem and Definition }(Marin\cite{Ma2}) {\it The
connection $\Omega _{K} $ is flat and $G$-invariant. So it induce a
flat connection $\bar{\Omega }_{K}$ on the quotient bundle $M_G
\times_G V_G $. The generalized LK representation for $A_G $ is
defined as the
monodromy representation of $\bar {\Omega }_{K}$. }\\

We denote the generalized Krammer representation as $(V_G ,\rho
_{\kappa ,m })$. When $(G,V)$ is the reflection group $W_{\Gamma }$
of ADE type,  they were first constructed in \cite{CW} by Cohen,
Wales and by Digne in \cite{Di}.  They are proved to factor through
BMW algebras in \cite{CGW1}.

\begin{thm}[Marin \cite{Ma1} ] The generalized Krammer representation $(V_G , \rho _{\kappa ,m })$factor
through the generalized BMW algebra $B _{\Gamma } (\tau ,l)$ with
$\tau =\frac {q^m -q^{-m} +q^{-1} -q}{q^{-1}-q}$ and $l= q^{-m}$.
Where $q=e^{\kappa \pi \sqrt{-1}}$.

\end{thm}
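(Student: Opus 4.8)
The plan is to realize $\rho_{\kappa,m}$ as a representation of $B_\Gamma(\tau,l)$ by verifying that the monodromy images of the standard generators of $A_\Gamma = A_G$ satisfy every defining relation of Table 2. The first point to exploit is that $B_\Gamma(\tau,l)$ is generated by the invertible elements $X_i$ alone: the relation $l(X_i^2 + \hat m X_i - 1) = \hat m E_i$ (with $\hat m = \frac{l-l^{-1}}{1-\tau}$) expresses each $E_i$ as a polynomial in $X_i$, so $B_\Gamma(\tau,l)$ is a quotient of $\mathbb{C}A_\Gamma$ via $\sigma_i \mapsto X_i$. Writing $\sigma_i$ for the Artin generator attached to node $i$, I would set $X_i := \rho_{\kappa,m}(\sigma_i)$ and $E_i := \frac{l}{\hat m}(X_i^2 + \hat m X_i - 1)$. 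The braid relations among the $X_i$ then hold automatically, since $\rho_{\kappa,m}$ is a genuine representation of $A_\Gamma$; what remains is the rank-one cubic relation for each $X_i$ and the relations involving the $E_i$.

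The heart of the matter is the local monodromy around a single hyperplane $H_{s_i}$, governed by the residue $\kappa t_{s_i}$. I would first diagonalize $t_{s_i}$ by decomposing $V_G$ under the involution $u \mapsto s_i u s_i$ on $R$: the vector $v_{s_i}$ is fixed with eigenvalue $m$; on each fixed point $u$ and on each symmetric combination $v_u + v_{s_i u s_i}$ the operator has eigenvalue $1$ (after correcting by a multiple of $v_{s_i}$, using the identity $\alpha(s_i,u) = \alpha(s_i,s_i u s_i)$, which follows from the bijection $r \mapsto s_i r s_i$ on $\{r : rur = s_i\}$), while each antisymmetric combination $v_u - v_{s_i u s_i}$ is an eigenvector with eigenvalue $-1$. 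Thus $t_{s_i}$ has spectrum $\{m, 1, -1\}$. Since $\sigma_i^2$ is a full loop around $H_{s_i}$, with monodromy $\exp(2\pi\sqrt{-1}\,\kappa t_{s_i})$, while $\sigma_i$ itself realizes the half-loop composed with the deck transformation $\iota(s_i)$ (which fixes $v_{s_i}$ and the symmetric vectors and negates the antisymmetric ones), the eigenvalues of $X_i = \rho(\sigma_i)$ come out as $q^{m} = l^{-1}$, $q$ and $-q^{-1}$, with $q = e^{\kappa\pi\sqrt{-1}}$. This yields $(X_i - l^{-1})(X_i^2 + (q^{-1}-q)X_i - 1) = 0$; matching $\hat m = q^{-1}-q$ and $l = q^{-m}$ against $\hat m = \frac{l-l^{-1}}{1-\tau}$ forces exactly $\tau = \frac{q^{m}-q^{-m}+q^{-1}-q}{q^{-1}-q}$, so the rank-one relations $l(X_i^2 + \hat m X_i - 1) = \hat m E_i$, $X_i E_i = l^{-1} E_i$ and $E_i^2 = \tau E_i$ hold.

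Every remaining relation of Table 2 involves at most two nodes $i,j$, so I would verify them inside the rank-two parabolic subgroups of $A_\Gamma$. By Marin's parabolic restriction theorem (the last theorem of Section 2), the restriction of $\rho_{\kappa,m}$ to the parabolic $A_{G_0}$ attached to a codimension-two edge $L$ is isomorphic to the monodromy of the truncated connection $\Omega_0 = \kappa\sum_{s:\, H_s \supset L} t_s\,\omega_s$ on the rank-two arrangement. For non-adjacent $i \nsim j$ the edge is of type $A_1 \times A_1$, the relevant residues commute and the connection splits, giving $X_i X_j = X_j X_i$, $X_i E_j = E_j X_i$ and $E_i E_j = E_j E_i$. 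For adjacent $i \sim j$ the edge is of type $A_2$, so the restricted representation is the rank-two generalized Krammer representation of $A_{A_2} = B_3$, and I would check $s_i s_j s_i = s_j s_i s_j$, $s_i s_j e_i = e_j s_i s_j$, $e_i s_j e_i = e_i$ and $e_i^2 = \tau e_i$ directly on the low-dimensional $B_3$-subrepresentation, as in the original BMW computation.

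The main obstacle is this rank-two $A_2$ verification, together with making the local picture of the second step fully rigorous. Concretely one must justify the identification $X_i \sim \iota(s_i)\exp(\pi\sqrt{-1}\,\kappa t_{s_i})$ relating the braid generator to the half-monodromy and the deck transformation, and then confirm that the spectrally defined $E_i$ coincides, with the correct scalar normalization, with the Brauer element appearing in the cross relations $E_i X_j E_i = l E_i$ and $X_i X_j E_i = E_j X_i X_j$. This amounts to an explicit computation in the rank-two Krammer module in which one tracks the integers $\alpha(s,u)$ and the monodromy normalizations; the eigenvalue bookkeeping of the second step is precisely what guarantees that these constants conspire to reproduce the parameters $\tau$ and $l$ of $B_\Gamma(\tau,l)$.
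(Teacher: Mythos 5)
A preliminary remark: the paper contains no proof of this statement --- Theorem 4.1 is quoted from Marin [Ma1], and the closest argument actually carried out in the paper is the proof of Theorem 5.3, whose skeleton is the same as yours (reduce to parabolics of rank at most two via Theorem 2.1, then invoke the known low-rank case). So your plan is structurally the right one, and your spectral bookkeeping is correct: $t_{s_i}$ has spectrum $\{m,1,-1\}$ (the bijection $r\mapsto s_irs_i$ indeed gives $\alpha(s_i,u)=\alpha(s_i,s_ius_i)$), the expected eigenvalues of $X_i$ are $q^m,\,q,\,-q^{-1}$, and matching with the cubic forces $l=q^{-m}$ and $\tau=\frac{q^m-q^{-m}+q^{-1}-q}{q^{-1}-q}$. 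Note also that more of Table 2 comes for free than you use: once $E_i$ is the fixed polynomial $\frac{l}{\hat m}(X_i^2+\hat mX_i-1)$ in $X_i$, all the relations for $i\nsim j$ and the relation $X_iX_jE_i=E_jX_iX_j$ are formal consequences of the Artin relations (conjugation by $X_iX_j$ carries $X_i$ to $X_j$, hence $E_i$ to $E_j$); the only relations with genuine content are the cubic and $E_iX_jE_i=lE_i$ for $i\sim j$.

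The genuine gap lies exactly at those two points, and at the second one your argument, as written, would fail. By Theorem 2.1 the restriction of $\rho_{\kappa,m}$ to the $A_2$-parabolic attached to $L=H_{s_i}\cap H_{s_j}$ is \emph{not} the three-dimensional rank-two Krammer representation of $B_3$: it is (for generic $\kappa$) the monodromy of the truncated connection acting on the whole $|R|$-dimensional space $V_G$. The span $W$ of $v_{s_i},v_{s_j},v_{s_is_js_i}$ is an invariant subspace --- that is the rank-two Krammer module --- but each residue $t_s$ with $H_s\supset L$ sends the remaining basis vectors $v_u$ to $v_{sus}$ modulo $W$, so the restriction has further constituents on $V_G/W$: Hecke-type pieces built from the conjugation action of the parabolic $S_3$ on $R\setminus\{s_i,s_j,s_is_js_i\}$. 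The identity $E_iX_jE_i=lE_i$ is an identity of operators on all of $V_G$, and checking it on the subrepresentation $W$ says nothing about these other constituents. The missing, essential, step is to show that on $V_G/W$ the operator $X_i$ has (for generic $\kappa$) only the eigenvalues $q$ and $-q^{-1}$, so that $E_i=\frac{l}{\hat m}(X_i-q)(X_i+q^{-1})$ annihilates $V_G/W$; then $E_i^2=\tau E_i$ with $\tau\neq 0$ gives $\mathrm{Im}\,E_i=E_iW$, and the relation does reduce to the computation on $W$. The same criticism applies to the rank-one step: the identification $X_i\sim\iota(s_i)\exp(\pi\sqrt{-1}\,\kappa t_{s_i})$, which you list as an ``obstacle'', is the entire content of the cubic relation; it holds only up to conjugation and only for non-resonant $\kappa$ (a genericity hypothesis the theorem keeps implicit), and it must be proved, for instance by applying the same restriction theorem to the rank-one parabolic of the codimension-one edge $H_{s_i}$, where the connection can be integrated explicitly. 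As it stands, the two steps that carry the real content of the theorem are left undone, so what you have is a correct plan rather than a proof.
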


For later convenience we change notations slightly. For $s\in R$, we
define $p_s : V_G \rightarrow V_G $ by
$$ p_s (v_s )= (1-m_s )v_s ,\ p_s (v_u )= \alpha (s,u ) v_s \ for\ u\neq s.$$

We also define $\iota :G \rightarrow Aut (V_G )$ by $\iota (w) (v_s)
= v_{wuw^{-1}}$.  Then $p_s$ is a projector to the complex line
$\mathbb{C}v_s $. And Marin's flat connection $\Omega _K$ is written
as $\sum _{s\in \mathcal {R}} k\cdot (\iota (s) -p_s ) \omega _s$.\\

\nd{\bf A Further Generalization }  Let $V$ be a $n$-dimensional
complex linear space. Let $G\subset U(V)$ be a finite pseudo
reflection group(not only complex reflection group). We define $P,R,
\mathcal {A}$ for $G$ as in Section 1. For $s\in R$, denote the
reflection hyperplane of $s$ as $H_s$. Define $V_G =\mathbb{C} <v_i
>_{i\in P}$. Since $w(H_v )$ is annother reflection hyperplane for any
$w\in G $ and $v\in P$, there is an action of $G$ on $P$ which
induce a representation $\iota : G\rightarrow Aut (V_G )$.
Explicitly $w(i)$ is defined by $H_{w(i)}= w(H_v )$. For $i\in P$,
let $p_i :V_G \rightarrow V_G $ be a projector to $\mathbb{C} v_i $
which is written as:
$$p_i (v_i )=m_i v_i ,\ p_i (v_j ) =\alpha _{i,j} v_i .$$
As in Section 1 let $\{\mu _s \} _{s\in R }$ be a set of nonzero
constants such that: $\mu _{s_1} =\mu _{s_2 } $ if $ s_1$ is
conjugate to $s_2 $ in $G$. Define a function $i: R\rightarrow P $
such that $H_{i(s)}$ is the reflection hyperplane of $s$ for any
$s\in R$. Consider a connection $\Omega _{LK} $ on the trivial
bundle $M_G \times V_G $ which have the form
$$ \sum _{i\in P} (\sum _{s: i(s)=i } \mu _s \iota (s) -p_i ) \omega _i .$$

\begin{thm}  The connection $\Omega _{LK}$ is flat and $G$-equivariant if
and only if the the following conditions are satisfied:

$(1)$ $m_i =m_j$ if there is $w\in G$ such that $\iota
  (w)(v_i)=v_j$.

  $(2)$ $\alpha _{i,j} =\sum _{s : \iota (s )(v_j )=v_i } \mu _{s} .$

 \end{thm}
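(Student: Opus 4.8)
The plan is to prove the two properties separately, handling $G$-equivariance through Proposition 2.2 and flatness through Kohno's criterion (Theorem 3.1), and in each case to peel off the ``group part'' of the connection, which is already under control by Proposition 2.1. Write $\Omega_{LK}=\sum_{i\in P}X_i\,\omega_i$ with $X_i=A_i-p_i$, where $A_i=\sum_{s:\,i(s)=i}\mu_s\iota(s)$, and extend the notation by setting $\alpha_{i,i}=m_i$, so that $p_j(v_k)=\alpha_{j,k}v_j$ for all $j,k$. For $G$-equivariance I would apply Proposition 2.2 directly: the requirement is $\iota(w)X_i\iota(w)^{-1}=X_{w(i)}$ for all $w\in G$ and $i\in P$. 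Because $\iota$ is a representation and $\mu$ is constant on conjugacy classes, the map $s\mapsto wsw^{-1}$ carries the reflections with hyperplane $H_i$ bijectively onto those with hyperplane $H_{w(i)}$, whence $\iota(w)A_i\iota(w)^{-1}=A_{w(i)}$ automatically. Thus equivariance reduces to $\iota(w)p_i\iota(w)^{-1}=p_{w(i)}$; evaluating both sides on $v_k$ (using $\iota(w)v_k=v_{w(k)}$) gives $\alpha_{i,w^{-1}(k)}=\alpha_{w(i),k}$ for all $k$, i.e.\ $\alpha_{i,j}=\alpha_{w(i),w(j)}$ for all $i,j$. The diagonal case $j=i$ is exactly condition $(1)$, and the off-diagonal case is the statement that $\alpha$ is $G$-invariant. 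I would also record the short reindexing argument showing that the formula in $(2)$ always produces a $G$-invariant $\alpha$, so that $(1)$ and $(2)$ together always entail equivariance.

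For flatness I would invoke Theorem 3.1: $\Omega_{LK}$ is flat if and only if $[X_i,\sum_{j\in P_L}X_j]=0$ for every codimension-$2$ edge $L$ and every $i\in P_L$. Since $\sum_{j\in P_L}A_j=\sum_{s\in R_L}\mu_s\iota(s)$, where $R_L$ is the set of reflections whose reflecting hyperplane contains $L$, this sum is the value at $L$ of the image under $\iota$ of the Brou\'e--Malle--Rouquier connection of Proposition 2.1; that connection is flat, so Kohno's relation forces $[A_i,\sum_{j\in P_L}A_j]=0$. Expanding $X_i=A_i-p_i$ and cancelling this pure group commutator, flatness becomes equivalent to the mixed relation
\[
[A_i,\ \textstyle\sum_{j\in P_L}p_j]\;+\;[p_i,\ \textstyle\sum_{j\in P_L}A_j]\;=\;[p_i,\ \textstyle\sum_{j\in P_L}p_j]
\]
at every codimension-$2$ edge $L$ and every $i\in P_L$.

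The heart of the argument is then a local analysis of this identity at a fixed edge $L$. I would use the $G_L$-stable decomposition $V_G=V_L\oplus V_L'$, with $V_L=\mathrm{span}\{v_k:k\in P_L\}$ and $V_L'$ spanned by the remaining basis vectors: each $\iota(s)$ with $s\in R_L$ fixes $L$ and hence permutes the hyperplanes through $L$ separately from the rest, preserving the splitting, while each $p_j$ with $j\in P_L$ has image in $\mathbb{C}v_j\subset V_L$. Evaluating the displayed relation on every basis vector $v_k$ and comparing the coefficient of each $v_i$ converts it into scalar identities among the $\alpha_{i,j}$, the $m_i$, and the multiplicities $\#\{s\in R_L:s(H_j)=H_i\}$. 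The geometric input I would isolate as a lemma is that a pseudo-reflection $s$ with $s(H_j)=H_i$ and $H_i\neq H_j$ necessarily has $H_i\cap H_j\subset H_s$ (if the non-fixed eigendirection of $s$ lay in $H_i$, then $s$ would fix $H_i$, contradicting $s(H_i)=H_j$); consequently $R(i,j)=\{s\in R_L:s(H_j)=H_i\}$ for $L=H_i\cap H_j$, which is what lets the local identities be rewritten in the global form $\alpha_{i,j}=\sum_{s\in R(i,j)}\mu_s$, i.e.\ condition $(2)$. Reading this analysis as an equivalence yields both directions of the theorem at once.

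I expect the \emph{main obstacle} to be precisely this codimension-$2$ coefficient matching. One must organise the reflections of the rank-$2$ parabolic $G_L$, track how $\iota(s)$ and the projectors $p_j$ interact on \emph{both} summands $V_L$ and $V_L'$ (note that $p_i(v_k)=\alpha_{i,k}v_i$ also involves indices $k\notin P_L$, so the local relation constrains $\alpha$ beyond $P_L$), and verify that all cross-terms reassemble into exactly the sum $\sum_{s\in R(i,j)}\mu_s$ and nothing more. The accompanying point of care is the geometric lemma above, since it is what guarantees that reflections outside the parabolic $G_L$ contribute nothing, so that the purely local computation genuinely recovers the global condition $(2)$.
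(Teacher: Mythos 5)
Your proposal is correct and follows essentially the same route as the paper's own proof: $G$-equivariance is reduced via Proposition 2.2 to conjugation-covariance of the projectors (giving condition $(1)$ plus the $G$-invariance of $\alpha$ that $(2)$ supplies by reindexing), flatness is reduced via Kohno's criterion and the flatness of the Brou\'e-Malle-Rouquier connection to a mixed commutator identity at each codimension-$2$ edge, and the coefficient matching there--globalized by exactly your geometric lemma, which is the paper's Lemma 5.4 (due to Marin) guaranteeing that every $s$ with $s(H_j)=H_i$ lies in the parabolic at $L=H_i\cap H_j$--yields condition $(2)$. The only differences are cosmetic: the paper first observes that $[\sum_{s:i(s)=i_1}\mu_s\iota(s),\,\sum_v p_{i_v}]=0$ outright before comparing coefficients, and it proves the geometric lemma by an inner-product computation rather than your eigendirection argument (where, for pseudo-reflections of order greater than two, one should pass to $s^{-1}$, since $s(H_j)=H_i$ does not literally give $s(H_i)=H_j$).
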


 \begin{proof} First we suppose $\Omega _{LK}$ is a flat, $G$-equivariant connection. By Proposition 2.2 we have,
 $$ \iota (w) ( \sum_{s:i(s)=i} \mu_s \iota (s) -p_i) \iota (w)^{-1} = \sum_{s:i(s)=w(i)} \mu_s \iota (s) -p_{w(i)}.$$
 By condition of the set $\{ \mu_s \}_{s\in R}$, above
 identity is equivalent to $\iota (w) p_i \iota (w)^{-1} =
 p_{w(i)},$ which implies $m_i = m_{w(i)} .$

Let $L$ be any codimension 2 edge of the arrangement $\mathscr{A}$.
Let $H_{i_1},\cdots , H_{i_{N}}$ be all the hyperplanes in
$\mathscr{A}$ containing $L$. By theorem 3.1,  flatness of
$\Omega_{LK}$ implies :
\begin{equation}
[ \sum _{s: i(s)=i_a } \mu _s \iota (s) -p_{i_a } , \sum _{v=1} ^{N}
( \sum _{s:i(s) =i_v } \mu _s \iota (s) -p_{i_v } ) ] =0.
\end{equation}

for $1\leq a\leq N $. Without losing generality suppose $a=1$.  It
is equivalent to the following identity because by Proposition 2.1,
the sum of those terms containing no $p_i$ is zero.

\begin{equation}
[ p_{i_1 } , \sum _{v=1} ^{N} ( \sum _{s:i(s) =i_v } \mu _s \iota
(s)- p_{i_v } ) ] + [ \sum _{s: i(s)=i_1 } \mu _s \iota (s)  , \sum
_{v=1} ^{N} p_{i_v } ) ] =0.
\end{equation}

Now for those $s$ such that $ i(s) =i_1$ we have $\{ s(i_1 ), \cdots
, s(i_N ) \} =\{ i_1 ,\cdots ,i_N \}. $  So we have:
\begin{equation}
\begin{split}
 [ \sum _{s: i(s)=i_1 } \mu _s \iota (s)  ,
\sum _{v=1} ^{N} p_{i_v } ) ] &=
\sum _{s: i(s)=i } \mu _s \sum _{v=1} ^{N} (\iota (s) p_{i_v }-p_{i_v }\iota (s) )\\
   &= \sum _{s: i(s)=i } \mu _s \sum _{v=1} ^{N} (  p_{s(i_v )} \iota (s) -p_{i_v }\iota (s)) \\
   &= 0. \\
\end{split}
\end{equation}
So the identity (7) is equivalent to:

\begin{equation}
\begin{split}
  [ p_{i_1 } , \sum _{v=1} ^{N} ( \sum _{s:i(s) =i_v } \mu _s \iota (s)
-p_{i_v } ) ] &= \sum _{v=2} ^{N} \sum _{s:i(s) =i_v } [ p_{i_1 } ,
( \sum _{s:i(s) =i_v } \mu _s \iota (s) -p_{i_v } ) ]\\
&=0.
\end{split}
\end{equation}

This is because $[p_{i_1 } , \iota(s)]=0 $ if $i(s) =i_1 $. After
splitting the Lie bracket in equation (9), the sum of all those
terms mapping to $\mathbb{C} v_{i_u }$ is $p_{i_u } p_{i_1} -\sum
_{s: s(v_{i_1 } ) =v_{i_u }}\mu_s \iota (s) p_{i_1 } $. It must be
0. Chose $s_0 $ such that $s_0 (v_{i_1 }) =v_{i_u }$ if there exist
one, then we have $ p_{i_u } p_{i_1 }= \alpha _{i_u ,i_1 } \iota
(s_0 ) p_{i_1 }$. More over, for any $s$
 such that $s(v_{i_1 } ) =v_{i_u }$  we have $\iota (s) p_{i_1 } = \iota (s_0 )
p_{i_1 }$. Put these identities in equation  $p_{i_u } p_{i_1} -\sum
_{s: s(v_{i_1 } ) =v_{i_u }}\mu_s \iota (s) p_{i_1 } =0 $ , we get
$$(\alpha _{i_u ,i_1 } -\sum _{s: s(v_{i_1 })=v_{i_u }} \mu _s ) \iota (s_0 ) p_{i_1 } =0 .$$

So we have $\alpha _{i_u ,i_1 } =\sum _{s: s(v_{i_1 })=v_{i_u }} \mu
_s .$ If there don't exist such $s_0$, we can prove $\alpha _{i_u
,i_1 } =0 $ similarly.

Now suppose conditions $(1)$ and $(2)$ are satisfied, by the same
arguments we only need to prove above equation $(9)$ to show $\Omega
_{LK}$ is flat. The conditions (2) implies
\begin{equation}
p_i p_j = \sum _{s: s(j)=i } \mu _s \iota (s) p_j , for\ any\ i\neq
j.
\end{equation}
It also implies
\begin{equation}
p_i p_j = \sum _{s: s(j)=i } \mu _s p_i \iota (s)  , for\ any\ i\neq
j.
\end{equation}
since $\iota (s) p_j = \iota (s)p_j \iota (s) ^{-1} \iota (s) = p_i
\iota (s)$ for those $s$ such that $s(j)=i$.  Now the right hand
side of equation $(9)$ can be written as
$$\sum _{v=2} ^N (p_{i_1 } p_{i_v } - \sum _{s: s(i_v ) =i_1 } \mu _s p_{i_1 } \iota (s))+
\sum _{v=2} ^{N} (p_{i_v } p_{i_1 } -\sum _{s: s(i_v )=i_1 } \mu _s
\iota (s) p_{i_v }).$$

So the equation $(6)$ is true and it implies flatness of
$\Omega_{LK}$ by Theorem 3.1.  $G$-equivariance of the connection is
easy to see.

\end{proof}

\textbf{Remark} In the connection $\Omega _{LK}$  if make $\mu _s
=\kappa $ for all $s$ and $m_i =m$ for all $i$ then we obtain
Marin's connection. Above theorem produces flat, $G$-equivariant
connections with more parameters. It also explains the number
$\alpha _{i,j}$ in Marin's construction.

\begin{defi}[Generalized LK representations for general complex braid groups]
Following notations introduced above. Since $\Omega _{LK}$ is
$G$-invariant, it induces a flat connection $\bar{\Omega } _{LK}$ on
the quotient bundle $M_G \times _{G} V_G $.  The generalized
Lawrence-Krammer representation of the braid group $A_G $ is defined
as the monodromy representation of $\bar{\Omega } _{LK} $.

\end{defi}

 Suppose $P_1 ,\ P_2 ,\ \cdots , \
P_{N_G } $ are all equivalent classes of $P$ under the equivalence
relation $'\sim '$. The following lemma is essentially from
$\cite{Ma2}$.

\begin{lem}
For any $1\leq N\leq N_G $, the subspace $V_N =\oplus _{i\in P_N }
\mathbb{C} v_i  \subset V_G $ is a subrepresentation.

\end{lem}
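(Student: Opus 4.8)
The plan is to lift the statement to the flat bundle upstairs. Since the generalized Lawrence--Krammer representation of $A_G$ is by definition the monodromy of $\bar{\Omega}_{LK}$ on the quotient bundle $M_G \times_G V_G$, it suffices to exhibit $M_G \times V_N$ as a $G$-invariant subbundle of the trivial bundle $M_G \times V_G$ that is stabilized by the connection $\Omega_{LK}$. Such a subbundle descends to a subbundle of $M_G \times_G V_G$ preserved by $\bar{\Omega}_{LK}$, and parallel transport then restricts to $V_N$, so the monodromy representation preserves the fiber $V_N$. Thus the whole problem reduces to two fiberwise checks.

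First I would record that $V_N$ is $G$-stable: because $P_N$ is, by definition, a single orbit of the $G$-action on $P$, the operator $\iota(w)$ merely permutes the basis vectors $\{v_i\}_{i\in P_N}$ among themselves, whence $\iota(w)(V_N)=V_N$ for every $w\in G$. Next I must show that every connection coefficient $X_i := \sum_{s:\,i(s)=i}\mu_s\iota(s)-p_i$, for all $i\in P$, carries $V_N$ into $V_N$. The reflection part is immediate by the same orbit argument: each $\iota(s)$ with $s\in R$ preserves $V_N$, hence so does the sum $\sum_{s:\,i(s)=i}\mu_s\iota(s)$.

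The key point lies in the projector $p_i$, and here I would invoke condition $(2)$ of Theorem 4.2, namely $\alpha_{i,j}=\sum_{s:\,\iota(s)(v_j)=v_i}\mu_s$. The indexing reflections $s$ satisfy $s(H_j)=H_i$, which can occur only when $H_i$ and $H_j$ lie in one $G$-orbit, i.e. $i\sim j$; hence $\alpha_{i,j}=0$ whenever $i$ and $j$ sit in different classes. This forces the desired block structure: if $i\in P_N$ then $p_i$ has image in $\mathbb{C}v_i\subseteq V_N$, so $p_i(V_N)\subseteq V_N$; while if $i\notin P_N$ then $p_i(v_j)=\alpha_{i,j}v_i=0$ for every $j\in P_N$, so $p_i(V_N)=0\subseteq V_N$. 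In either case $X_i$ stabilizes $V_N$, completing the fiberwise check.

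I do not anticipate a genuine obstacle here; the argument is essentially bookkeeping organized around the $G$-orbit decomposition of $P$. The one point that deserves emphasis is that the off-diagonal coefficients $\alpha_{i,j}$ of the projectors vanish across distinct orbits, which is precisely what condition $(2)$ guarantees. It is also worth remarking that the same computation already shows $V_N$ is a $P_G$-subrepresentation upstairs, and that it descends to a subrepresentation of $A_G$ exactly because of the $G$-invariance established in the first step.
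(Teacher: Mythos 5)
Your proof is correct and is essentially the paper's own argument, just written out in full: the paper's proof consists precisely of the two observations you make, namely that $w(i)\sim i$ for all $w\in G$ (so $\iota(w)$ and the reflection parts of the connection coefficients permute $\{v_i\}_{i\in P_N}$) and that $p_i(v_j)=0$ whenever $i\nsim j$ (which, as you note, follows from condition $(2)$ of Theorem 4.2 since no $s\in R$ can send $H_j$ to $H_i$ across distinct orbits). Your additional framing via the $G$-invariant subbundle $M_G\times V_N$ descending to the quotient and being preserved by parallel transport is exactly the implicit reduction behind the paper's phrase ``we only need to observe,'' so there is no substantive difference in approach.
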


\begin{pf}
We only need to observe that $w(i) \sim i $ for any $i\in P $ and
$w\in G$, and $p_i (v_j ) = 0 $ if $i \nsim j $.
\end{pf}

\section{Basic Properties about  $B_G (\Upsilon)$}

Suppose $G\subset U(V)$ is a finite pseudo reflection group. Define
$B_G (\Upsilon)$ as in Definition 1.1. When $G$ is a complex
reflection group, there is a bijection from $R$ to $\mathcal {A} $:
$s \mapsto H_s $. So we can use $R$ as the indices set $P$ of
reflection hyperplanes. In these cases, for $s_1 ,s_2 \in R $,
$R(s_1 ,s_2 )= \{ s\in R | s(H_{s_2}) =H_{s_1 } \} =\{s\in R | ss_2
s=s_1 \} $.

\begin{thm}

 When $G$ is finite then $B _G
(\Upsilon )$  is a finite dimensional algebra. Moreover, the map $w
\mapsto T_w $ for $w\in G $ induce an injection  $j: \mathbb{C} G
\rightarrow B _G(\Upsilon )$.

\end{thm}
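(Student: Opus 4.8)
The plan is to establish the two assertions separately, dealing with the injectivity of $j$ first since it is the shorter argument, and then producing an explicit finite spanning set for finite dimensionality.

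For the injectivity of $j\colon \mathbb{C}G \to B_G(\Upsilon)$, I would exhibit one representation of $B_G(\Upsilon)$ on which the $T_w$ act by linearly independent operators, the natural choice being to kill all the $e_i$. Concretely, define $\psi\colon B_G(\Upsilon)\to \mathrm{End}(\mathbb{C}G)$ on generators by $\psi(T_w)=L_w$, the left-regular action of $w$ on $\mathbb{C}G$, and $\psi(e_i)=0$ for every $i\in P$. Relation $(0)$ holds because $L_{w_1}L_{w_2}=L_{w_1 w_2}$, while each of the relations $(1),(1)',(2),(3),(4),(5),(6)$ carries at least one factor $e_i$ on each side and so collapses to $0=0$ once the $e_i$ go to $0$; in particular the right-hand side $(\sum_s\mu_s T_s)e_j$ of relation $(5)$ also maps to $0$. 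Hence $\psi$ is a well-defined algebra homomorphism. Since the $\{L_w\}_{w\in G}$ are distinct permutation matrices, they are linearly independent in $\mathrm{End}(\mathbb{C}G)$, so $\psi\circ j$ is injective on $\mathbb{C}G$, and therefore so is $j$.

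For finite dimensionality I would first use relations $(0)$ and $(3)$ to push every group element to the far left: since $e_j T_w=T_w e_{w^{-1}(j)}$ and $T_{w_1}T_{w_2}=T_{w_1 w_2}$, any word in the generators equals a single $T_w$ times a monomial $E=e_{i_1}e_{i_2}\cdots e_{i_k}$ in the $e$'s. Thus $B_G(\Upsilon)$ is spanned by elements $T_w E$, and it remains to bound the monomials $E$. The key claim is that one may always reduce to $E$ whose indices are pairwise distinct; of these there are only finitely many (at most $\sum_{k\le |P|}|P|!/(|P|-k)!$), with $w$ ranging over the finite group $G$, giving a finite spanning set. To prove the claim, note that for $i\neq j$ exactly one of the two alternatives holds: either $H_i\pitchfork H_j$, or $H_i\cap H_j$ is a codimension-two noncrossing edge (a third hyperplane through $H_i\cap H_j$ forces the latter). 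Given a repeated index $i_a=i_b$ with $a<b$, slide $e_{i_a}$ rightward one step at a time: at an adjacent pair $e_{i_a}e_{i_c}$ with $H_{i_a}\pitchfork H_{i_c}$, relation $(4)$ commutes the factors; with $H_{i_a}\cap H_{i_c}$ noncrossing, relation $(5)$ (or $(6)$, giving $0$) rewrites $e_{i_a}e_{i_c}$ as $(\sum_s\mu_s T_s)e_{i_c}$ and deletes one $e$-factor; and if the slide reaches $e_{i_b}$ with every intervening pair crossing, relation $(2)$ gives $e_{i_a}e_{i_b}=m_{i_a}e_{i_a}$, again deleting a factor. In all cases the total number of $e$-factors strictly drops, any newly introduced $T_s$ being re-absorbed on the left, so induction on the $e$-length terminates at monomials with distinct indices.

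The part I expect to be the main obstacle is the bookkeeping in this last reduction: one must verify that the rewriting really terminates and that the inserted scalars and group elements are tracked correctly, in particular that applying relation $(5)$ and then re-normalizing by relation $(3)$ never reintroduces a longer $e$-monomial. Once the finite spanning set $\{T_w E\}$ is secured, finite dimensionality of $B_G(\Upsilon)$ is immediate.
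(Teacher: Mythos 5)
Your proposal is correct and takes essentially the same route as the paper: injectivity via the map killing all $e_i$ (the paper uses the surjection $\pi\colon B_G(\Upsilon)\to\mathbb{C}G$ with $\pi\circ j=\mathrm{id}$, you merely compose with the faithful left regular representation), and finite dimensionality via the normal form $T_w e_{i_1}\cdots e_{i_k}$ together with induction on $e$-length using relations $(2),(4),(5),(6)$. The only cosmetic differences are that your spanning set (distinct indices) is slightly larger than the paper's (distinct indices \emph{and} pairwise commuting factors), and that you replace the paper's geometric Lemma 5.1 by the purely definitional observation that for $i\neq j$ either $H_i\pitchfork H_j$ or $H_i\cap H_j$ is a noncrossing edge; both versions yield a finite spanning set, so the argument goes through.
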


\begin{proof}
First by using relation (3), we can identify any word made from the
set $\{w\in G \} \coprod \{e_i \} _{i\in P } $ with a word of the
form $T_w e_{i_1} e_{i_2 }\cdots e_{i_k }$ where $w\in G $. We
define the e-length of such a word as $k$.  In this word if two
neighboring $e_{i_v }$ and $e_{i_{v+1 }}$ don't commute with each
other, then for $e_{i_v} e_{i_{v+1}}$, condition in $(5)$ of
Definition 1.1 is satisfied as shown by the next lemma.

\begin{lem}
If two pseudo reflection $s_1 $ and $s_2 $ don't commute with each
other, suppose the reflection hyperplane of $s_1 (s_2 )$ is $H_{i_1
} (H_{i_2 } ) $, then $\{i_1 ,i_2  \} \subsetneq \{k\in P | H_k
\supseteq H_{i_1 } \cap H_{i_2 } \} $.
\end{lem}

\begin{proof} We suppose $\{i_1 ,i_2  \} = \{k\in P | H_k \supseteq H_{i_1 }
\cap H_{i_2 } \} $. Let $L= H_{i_1 }\cap H_{i_2 }$, and $<\ ,\  >$
being a $G$-invariant inner product on $V$. Chose $v_k \in H_{i_k }$
such that $v_k \perp L $ according to $<\ ,\ >$ for $k=1,2$. Suppose
$\{ v_3 ,\cdots , v_N \}$ is a basis of $L$, then $\{ v_1 ,v_2
,\cdots ,v_N \}$ is a basis of $V$. Now since $s_1 (H_{i_2 })$ is
another reflection hyperplane containing $L$ and $s_1 (H_{i_2 })\neq
H_{i_1 }$, so we have $s_1 (H_{i_2 }) =H_{i_2 }$, which implies
$s_1$ can be presented as a diagonal matrix according to the basis
$\{v_1 ,\cdots ,v_N \}$. Similarly $s_2 $ can be presented by a
diagonal matrix according to the same basis. So $s_1 s_2 =s_2 s_1 $
which is a contradiction.
\end{proof}

The first statement of theorem 5.1 follows from the next lemma.

\begin{lem}
The algebra $B _G (\Upsilon )$ is spanned by the set
$$ \{T_w e_{i_1 }\cdots e_{i_ M}
| w\in G ; \ e_{i_u } e_{i_v } =e_{i_v } e_{i_u } and \ i_v \neq i_u
\ if\ u\neq v \ ; M\geq 0 \}$$

\end{lem}

\begin{proof} Let $A $ be the subspace in $B _G (\Upsilon )$ spanned
by elements listed in the lemma.  We only need to prove any word
$T_w e_{i_1} e_{i_2 }\cdots e_{i_k }$ represents an element in $A$.
We do it by induction on e-length of such words. First this is true
if $K=1$. Suppose it is true for $K\leq M $. Now consider a word
$x=we_{i_1 }\cdots e_{i_{M+1} }$. If there are two neighboring
$e_{i_v }$, $e_{i_{v+1 }}$ don't commute, then Lemma 5.1 enable us
to apply (5) or (6) in Definition 1.1 to identify $x$ with a linear
sum of words whose e-tail length are smaller than $M+1$. Suppose all
$e_{i_v }$'s in $x$ commute with each other, if there are $v_1 ,v_2
$ such that $i_{v_1 } =i_{v_2 } $, we use permutations between
$e_{i_v }$'s to identify $x$ with a word $y=we_{j_1 }\cdots
e_{j_{M+1 } }$ such that $j_1 =j_2 $. So $x=y=m_{j_1 }w e_{j_2
}\cdots e_{j_{M+1 } } $ by relation (2) of Definition 1.1. If all
$e_{i_v }$'s commute and all $i_v $'s are different then $x\in A $,
and induction is completed. For the second statement of theorem 5.1,
it isn't hard to see the following map
$$T_w \mapsto w,\ for\ w\in G;\ e_i \mapsto\ 0, \ for \ i\in P$$
extends to a surjection $\pi : B _G (\Upsilon ) \rightarrow
\mathbb{C} G $, and $\pi \circ j =id $. So $j$ is injective.
\end{proof}

This completes the proof of Theorem 5.1.
\end{proof}

By Theorem 5.1, $\mathbb{C} G$ is naturally embedded in $B_G
(\Upsilon )$. For saving notations from now on we always think
$\mathbb{C} G$ to be included in $B_G (\Upsilon )$, and denote $T_w
$ simply as $w$. The next lemma reduce one parameter in $B _G
(\Upsilon )$.

\begin{lem}
For $\lambda \in \mathbb{C} ^{\times }$, Let $\mu ^{\prime } _s
=\lambda \mu _s $ for $s \in R$, and Let $m^{\prime } _i =\lambda
m_i $ for $i\in P$. Let $\Upsilon ^{\prime }=\{\mu ^{\prime } _s ,
m^{\prime } _i \}_{s \in R ,i\in P}$, then $B _G (\Upsilon ^{\prime
})\cong B _G (\Upsilon )$.

\end{lem}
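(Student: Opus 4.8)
The plan is to exhibit an explicit isomorphism by rescaling the generators $e_i$, and then verify that this rescaling intertwines the two sets of defining relations. The key observation is that $\Upsilon'$ is obtained from $\Upsilon$ by multiplying every $\mu_s$ and every $m_i$ by the same nonzero scalar $\lambda$. Since the $e_i$ appear quadratically in relation $(2)$ (through $m_i$) and the $\mu_s$ appear linearly paired with one factor of $e_i$ in relation $(5)$, the natural guess is to send $e_i \mapsto \lambda e_i$ while keeping the group elements $T_w$ fixed.

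First I would define a candidate homomorphism $\Phi : B_G(\Upsilon') \to B_G(\Upsilon)$ on generators by $\Phi(T_w) = T_w$ for $w \in G$ and $\Phi(e_i) = \lambda e_i$ for $i \in P$. To see this is well-defined, I must check that $\Phi$ respects each of the relations $(0)$--$(6)$ of Definition 1.1, where the primed relations use the parameters $\mu'_s = \lambda\mu_s$ and $m'_i = \lambda m_i$. Relations $(0)$, $(3)$, and $(4)$ involve only $T_w$'s or are homogeneous of the same $e$-degree on both sides, so they are preserved trivially. Relation $(1)$ and $(1)'$ scale uniformly (degree one in $e_i$ on each side) and so hold. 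Relation $(2)$ becomes $\Phi(e_i)^2 = \lambda^2 e_i^2 = \lambda^2 m_i e_i = \lambda m'_i e_i = m'_i \,\Phi(e_i)$, which is exactly the primed relation $(2)$. For relation $(5)$, the left side $\Phi(e_i e_j) = \lambda^2 e_i e_j$ must equal $\Phi\bigl(\sum_{s\in R(i,j)} \mu'_s T_s\bigr)\Phi(e_j) = \lambda\sum_{s} \mu_s T_s \cdot \lambda e_j = \lambda^2 \sum_s \mu_s T_s e_j$, and the original relation $(5)$ gives $e_i e_j = \sum_s \mu_s T_s e_j$, so both sides match after cancelling $\lambda^2$; relation $(6)$ is the vacuous case $R(i,j)=\emptyset$ and is preserved.

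Next I would construct the inverse homomorphism $\Psi : B_G(\Upsilon) \to B_G(\Upsilon')$ by the symmetric rule $\Psi(T_w) = T_w$ and $\Psi(e_i) = \lambda^{-1} e_i$, using that $\Upsilon$ is recovered from $\Upsilon'$ by rescaling with $\lambda^{-1}$. The same relation-by-relation check (now with the roles of primed and unprimed parameters swapped) shows $\Psi$ is well-defined. Since $\Phi$ and $\Psi$ are defined on generators by mutually inverse scalar multiplications and fix the $T_w$, the compositions $\Psi\circ\Phi$ and $\Phi\circ\Psi$ act as the identity on all generators, hence are the identity on the whole algebra. Therefore $\Phi$ is an isomorphism and $B_G(\Upsilon') \cong B_G(\Upsilon)$.

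I do not expect a serious obstacle here, since the statement is essentially a normalization lemma. The only point requiring genuine care is relation $(5)$, where one must track the degree in $e$ on both sides correctly: the coefficient $\sum_s \mu_s T_s$ carries one factor of $\lambda$ while each explicit $e$ carries another, so the total scaling on each side of $(5)$ is $\lambda^2$, which is what makes the relation transform consistently. The remaining verifications are routine degree bookkeeping.
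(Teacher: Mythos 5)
Your proposal is correct and takes essentially the same approach as the paper: the paper's proof is a one-line assertion that rescaling the generators ($s_i \mapsto S_i$, $e_i \mapsto \lambda E_i$) extends to an isomorphism, while you carry out the relation-by-relation verification, correctly isolating $(2)$ and $(5)$ as the only relations where the scaling matters. In fact your bookkeeping is more reliable than the paper's: as literally written, the paper's map $e_i \mapsto \lambda E_i$ from $B_G(\Upsilon)$ to $B_G(\Upsilon')$ scales the wrong way (relation $(2)$ would force $\lambda^2 = 1$); the correct map in that direction is $e_i \mapsto \lambda^{-1} E_i$, which is exactly the inverse $\Psi$ you construct.
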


\begin{proof}
Denote the generators of  $B _G (\Upsilon ^{\prime })$ appeared in
Definition 1.1 as $S_i $'s and $E_i $'s . Then
$$s_i \mapsto S_i , \ e_i \mapsto \lambda E_i \ for \  i\in P $$
extend to an isomorphism from $B _G (\Upsilon )$ to $B _G (\Upsilon
^{'} )$.

\end{proof}

The following lemma can be found in \cite{Ma2}.
\begin{lem}[Marin]
For two different hyperplane $H_i ,H_j \in \mathcal {A}$, If $s\in R
$ satisfies $s(H_j ) =H_i $, then $s$ fix all points in $H_i \cap
H_j $. So, $R(i,j) =\{ s\in R | s(H_j )=H_i ; H_s \supseteq H_i \cap
H_j \}.$

\end{lem}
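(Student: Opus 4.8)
The plan is to exploit the unitary structure. Since $G\subset U(V)$, the pseudo reflection $s$ is unitary, so its eigenspaces are orthogonal for the $G$-invariant Hermitian form $\langle\,\cdot\,,\,\cdot\,\rangle$: the reflection hyperplane $H_s$ is the $1$-eigenspace, and its orthogonal complement $\ell_s=H_s^{\perp}$ is the line on which $s$ acts by the exceptional eigenvalue $\xi\neq 1$. Fixing a unit vector $e_s$ spanning $\ell_s$, every $s\in R$ is given by the explicit formula $s(x)=x+(\xi-1)\langle x,e_s\rangle e_s$ (with $\langle\,\cdot\,,\,\cdot\,\rangle$ linear in the first slot). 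The key observation is that the assertion ``$s$ fixes every point of $H_i\cap H_j$'' is equivalent to the inclusion $H_i\cap H_j\subseteq H_s$, since $H_s$ is precisely the fixed-point set of $s$; this is exactly what the extra condition $H_s\supseteq H_i\cap H_j$ records. So it suffices to prove $H_i\cap H_j\subseteq H_s$ whenever $s(H_j)=H_i$ and $H_i\neq H_j$.

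First I would pin down the normals. Let $a_i,a_j$ be nonzero normal vectors to $H_i,H_j$, so that $H_k=a_k^{\perp}$ and $(H_i\cap H_j)^{\perp}=\mathrm{span}(a_i,a_j)$; thus the desired inclusion $H_i\cap H_j\subseteq H_s$ is equivalent to $e_s\in\mathrm{span}(a_i,a_j)$. Because $s$ is unitary it carries orthogonal complements to orthogonal complements, so $H_i=s(H_j)=s(a_j^{\perp})=(s(a_j))^{\perp}$; hence $a_i$ is a scalar multiple of $s(a_j)$ and $\mathrm{span}(a_i,a_j)=\mathrm{span}(s(a_j),a_j)$.

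Next I would use the hypothesis $H_i\neq H_j$ to locate $e_s$. From the explicit formula, $s$ fixes $H_j$ setwise whenever $\ell_s\subseteq H_j$; since $s(H_j)=H_i\neq H_j$, we must have $\ell_s\not\subseteq H_j$, that is $\langle e_s,a_j\rangle\neq 0$. Applying the formula to $a_j$ gives $s(a_j)-a_j=(\xi-1)\langle a_j,e_s\rangle e_s$, which is a \emph{nonzero} multiple of $e_s$ because $\xi\neq 1$ and $\langle a_j,e_s\rangle\neq 0$. Therefore $e_s\in\mathrm{span}(a_j,s(a_j))=\mathrm{span}(a_i,a_j)$, whence $H_i\cap H_j\subseteq H_s$, proving the first assertion. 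The set equality is then immediate: $R(i,j)$ is by definition $\{s\in R: s(H_j)=H_i\}$, and the foregoing shows every such $s$ automatically satisfies $H_s\supseteq H_i\cap H_j$, so adjoining that condition changes nothing.

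This argument is short and essentially linear-algebraic; the only real content is the computation $s(a_j)-a_j=(\xi-1)\langle a_j,e_s\rangle e_s$, which simultaneously places $e_s$ in the plane $\mathrm{span}(a_i,a_j)$ and makes transparent why distinctness is indispensable: if one allowed $H_i=H_j$ with $\ell_s\subseteq H_j$, then $\langle a_j,e_s\rangle=0$, the displacement vanishes, and the conclusion genuinely fails. Thus the one step to get right is the reduction to normals together with the unitarity fact $s(a^{\perp})=(s(a))^{\perp}$; everything else is bookkeeping.
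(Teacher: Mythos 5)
Your proof is correct and takes essentially the same route as the paper's: both apply the rank-one formula for a unitary pseudo reflection to a normal vector of $H_j$, use $H_i \neq H_j$ to see that the coefficient $(\xi-1)\langle a_j, e_s\rangle$ is nonzero, and conclude that the exceptional eigenvector lies in $\mathrm{span}(a_i,a_j)$, hence is perpendicular to $H_i\cap H_j$, so that $H_i\cap H_j\subseteq H_s$. The only cosmetic difference is in justifying the nonvanishing: you note that $\ell_s\subseteq H_j$ would force $s(H_j)=H_j$, while the paper argues that a vanishing coefficient would make the normal of $H_j$ proportional to that of $H_i$ — the same contradiction with $H_i\neq H_j$.
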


\begin{proof}
Let $ <\ ,\ > $ be a $G$-invariant inner product on $V$.  Let
$\epsilon $ be the exceptional eigenvalue of $s$, and let $u $ be an
eigenvalue of $s$ with eigenvalue $\epsilon$. Let $u _i $, $ u _j $
be some nonzero vectors perpendicular to $H_i $, $H_j $
respectively. Then $u \perp H_s $. The action of $s$ on $V$ can be
written as $s(v) = v- (1-\epsilon ) \frac {<v, u
>}{<u ,u >} u $.  Now $s(H_j )=H_i $ implies
$s(u _j ) = u _j - (1-\epsilon ) \frac {<u _j , u
>}{<u ,u >} u  =\lambda u _i $ for some $\lambda
\neq 0$. Denote $(1-\epsilon ) \frac {<u _j , u
>}{<u ,u >}$ as $\kappa $. The condition that  $H_i $ is
different from $H_j $ implies $\kappa \neq 0$. So we have $u = \frac
{1}{\kappa} (u _j - \lambda u _i ) \perp H_i \cap H_j $, and $s$ fix
all points in $H_i \cap H_j $.

\end{proof}

There exists a natural anti-involution  on $B_G (\Upsilon )$ which
may be used to construct a cellular structure as follows.

\begin{lem}

The following map extends to an anti-involution $*$ of $B_G
(\Upsilon )$  $$ w\mapsto w^{-1}\   for\  w\in G \subset B_G
(\Upsilon ),\  e_i \mapsto e_i \ for\  all\  i\in P $$ if $\mu _s =
\mu _{s^{-1 }}$ for any $s\in R $.

\end{lem}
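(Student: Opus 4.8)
The plan is to build $*$ in two stages. First I would define an anti-homomorphism on the free associative $\mathbb{C}$-algebra $F$ on the alphabet $\{T_w\}_{w\in G}\cup\{e_i\}_{i\in P}$ by the rule $T_w\mapsto T_{w^{-1}}$, $e_i\mapsto e_i$ on generators and anti-multiplicative extension $(xy)^*=y^*x^*$. Since an anti-homomorphism reverses products, it descends to the quotient $B_G(\Upsilon)$ exactly when, for each defining relation of Definition 1.1 written as an equality $A=B$ of linear combinations of words, the image $*(A)=*(B)$ again holds in $B_G(\Upsilon)$. So the bulk of the work is to run through relations $(0)$--$(6)$ and verify each is $*$-stable. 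Once that is done, $*^2=\mathrm{id}$ is immediate on generators, as $(w^{-1})^{-1}=w$ and $e_i\mapsto e_i\mapsto e_i$, and extends by anti-multiplicativity, so $*$ is an anti-involution.

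The key elementary observations I would isolate first are: $s\mapsto s^{-1}$ is an involution of $R$ (a pseudo reflection has a pseudo reflection as inverse) fixing each reflecting hyperplane, $H_{s^{-1}}=H_s$; it restricts to a bijection $R(i,j)\to R(j,i)$ because $s(H_j)=H_i$ is equivalent to $s^{-1}(H_i)=H_j$; and the conditions ``$H_i\cap H_j$ is a noncrossing edge'' and ``$R(i,j)=\emptyset$'' are symmetric in $i,j$. With these, most relations are routine. For $(0)$, $*(T_{w_1}T_{w_2})=T_{w_2^{-1}}T_{w_1^{-1}}=T_{(w_1w_2)^{-1}}=*(T_{w_3})$. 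Relation $(2)$ is automatic by $\mathbb{C}$-linearity, while $(4)$ and $(6)$ follow from their own $i\leftrightarrow j$ symmetry. Applying $*$ to $(3)$, i.e.\ to $T_we_j=e_iT_w$, yields $e_jT_{w^{-1}}=T_{w^{-1}}e_i$, which is again an instance of $(3)$ for $w^{-1}$ since $w^{-1}(H_i)=H_j$; relation $(1')$ is handled identically, using $V_{w^{-1}}=V_w$ so that the noncrossing hypothesis transfers to $w^{-1}$.

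The two places needing a short argument are $(1)$ and $(5)$, the latter being the main obstacle and the only point where the hypothesis $\mu_s=\mu_{s^{-1}}$ is used. For $(1)$, applying $*$ to $T_{s_i}e_i=e_i$ produces the requirement $e_iT_{s_i^{-1}}=e_i$; I would derive this by iterating $e_iT_{s_i}=e_i$ to get $e_iT_{s_i}^{k}=e_i$ for all $k$, whence $e_iT_{s_i^{-1}}=e_iT_{s_i^{m_i-1}}=e_i$ using $s_i^{m_i}=1$ and relation $(0)$, and symmetrically for the other half. For $(5)$, applying $*$ to $e_ie_j=(\sum_{s\in R(i,j)}\mu_sT_s)e_j$ gives $e_je_i=e_j\sum_{s\in R(i,j)}\mu_sT_{s^{-1}}$; reindexing by $s'=s^{-1}$ turns the right-hand sum into $e_j\sum_{s'\in R(j,i)}\mu_{(s')^{-1}}T_{s'}=e_j\sum_{s'\in R(j,i)}\mu_{s'}T_{s'}$, the last equality being precisely where $\mu_{(s')^{-1}}=\mu_{s'}$ is invoked. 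Since $R(j,i)\neq\emptyset$ (the bijection $s\mapsto s^{-1}$ preserves nonemptiness), this is exactly relation $(5)$ with $i,j$ interchanged, and the symmetric computation handles the second equality of $(5)$. Collecting these checks shows every relation is $*$-stable, so $*$ descends to $B_G(\Upsilon)$ and is the desired anti-involution.
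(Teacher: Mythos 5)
Your proposal is correct and takes essentially the same approach as the paper: both verify that the anti-multiplicative extension preserves each defining relation of Definition 1.1, with relation $(5)$ as the crucial case, handled by the reindexing bijection $s\mapsto s^{-1}$ from $R(i,j)$ to $R(j,i)$ together with the hypothesis $\mu_s=\mu_{s^{-1}}$. Your write-up is more thorough than the paper's (which only displays the computation for relation $(5)$ and in particular does not spell out the small argument for $e_iT_{s_i^{-1}}=e_i$ needed when $s_i$ has order greater than $2$), but the underlying argument is identical.
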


\begin{proof}
We only need to certify $*$ keep all relations in Definition 1.1. As
an example for relation (5), on the one hand $* (e_i e_j ) = e_j e_i
$, on the other hand  $* [ (\sum _{s\in R(i,j)} \mu _s s ) e_j ] =
e_j (\sum _{s\in R(i,j)} \mu _s s^{-1 } ) = (\sum _{s\in R(i,j)} \mu
_s s^{-1}) e_i = (\sum _{s\in R(j,i) } \mu _{s^{-1} } s ) e_i =
(\sum _{s\in R(j,i) } \mu _{s } s ) e_i .$
\end{proof}

\nd{\bf Flat Connections   }

Define a formal connection $ \bar{ \Omega} _G = \kappa \sum _{i\in P
} (\sum _{s:i(s)=i }\mu _s s -e_i ) \omega _i .$

Suppose $\rho : B _G (\Upsilon ) \rightarrow End(E)$ is a finite
dimensional representation. On the vector bundle $M_G \times E $, we
define a connection $ \rho (\bar{\Omega } _G ) = \kappa \sum _{i\in
P } (\sum _{s:i(s)=i }\mu _s \rho (s) -\rho (e_i )) \omega _i$ where
$\kappa \in \mathbb{C}^{\times}$. Let $G$ acts on $M_G \times E $ as
$w\cdot (x,v)= (wx, \rho (w) v )$ for $w\in G$ and $(x,v)\in M_G
\times E$.

\begin{prop}
The connection $\rho (\bar{\Omega } _G ) $  and  $ \bar{ \Omega} _G
$ are flat and $G$-invariant.

\end{prop}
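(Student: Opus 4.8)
My plan is to verify directly that $\bar\Omega_G$ meets Kohno's flatness criterion (Theorem 3.1) and the invariance criterion of Proposition 2.2, carrying out the computation for the formal connection whose coefficients lie in $B_G(\Upsilon)$ itself. Once the needed identities are established in $B_G(\Upsilon)$, applying the algebra homomorphism $\rho$ transports them verbatim and yields flatness and $G$-invariance of $\rho(\bar\Omega_G)$, so it suffices to treat the formal connection. I abbreviate $A_i=\sum_{s:i(s)=i}\mu_s s$ and $X_i=A_i-e_i$, so that $\bar\Omega_G=\kappa\sum_{i\in P}X_i\omega_i$. The $G$-invariance condition $wX_iw^{-1}=X_{w(i)}$ is the easy half: on the reflection part $wA_iw^{-1}=A_{w(i)}$ exactly as in Proposition 2.1 (conjugation permutes the reflections with hyperplane $H_i$ onto those with hyperplane $H_{w(i)}$, and $\mu$ is a class function), while $we_iw^{-1}=e_{w(i)}$ is immediate from relation $(3)$.

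For flatness I fix a codimension $2$ edge $L$, let $H_{i_1},\dots,H_{i_N}$ be the hyperplanes through $L$ with $H_{i_1}$ an arbitrary one of them, and must check $[X_{i_1},\sum_{v=1}^N X_{i_v}]=0$. Expanding $X_{i_v}=A_{i_v}-e_{i_v}$ produces four brackets. The reflective bracket $[A_{i_1},\sum_v A_{i_v}]$ vanishes because $\Omega_G$ is flat (Proposition 2.1) and hence obeys the same Kohno condition. The mixed bracket $[A_{i_1},\sum_v e_{i_v}]$ vanishes because every $s$ with $i(s)=i_1$ fixes $L$ pointwise and therefore permutes $\{i_1,\dots,i_N\}$; relation $(3)$ then gives $\sum_v s\,e_{i_v}=\sum_v e_{s(i_v)}s=\sum_v e_{i_v}s$. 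Thus everything reduces to the single identity $[e_{i_1},A_L]=[e_{i_1},\sum_v e_{i_v}]$, where $A_L=\sum_v A_{i_v}$ runs over all reflections fixing $L$; establishing this is the core of the argument.

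I would prove this identity by the crossing/noncrossing dichotomy. If $L$ is a crossing edge then $N=2$; each $s$ with $i(s)=i_2$ fixes $H_{i_2}$ and, being a permutation of $\{i_1,i_2\}$, also fixes $H_{i_1}$, so relation $(3)$ gives $[e_{i_1},A_{i_2}]=0$ and hence $[e_{i_1},A_L]=0$, while $[e_{i_1},\sum_v e_{i_v}]=[e_{i_1},e_{i_2}]=0$ by relation $(4)$. If $L$ is noncrossing I would expand $A_L e_{i_1}$ and $e_{i_1}A_L$, pushing $e_{i_1}$ through by relation $(3)$ and grouping the reflections according to the index $i_u=s(i_1)$. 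By Lemma 5.3 the set $\{s:H_s\supseteq L,\ s(i_1)=i_u\}$ equals $R(i_u,i_1)$, so relation $(5)$ (with relation $(6)$ as its empty case) turns the block $u\ne 1$ into $e_{i_u}e_{i_1}$ on one side and $e_{i_1}e_{i_u}$ on the other, whereas the block $u=1$ collapses to one and the same scalar multiple of $e_{i_1}$ on both sides, since every reflection stabilizing $H_{i_1}$ and fixing $L$ is absorbed by $e_{i_1}$ through relation $(1)$ when its hyperplane is $H_{i_1}$ and through relation $(1)'$ otherwise (the noncrossing hypothesis of $(1)'$ holds precisely because $L$ is noncrossing). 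Subtracting, the scalar terms cancel and $[e_{i_1},A_L]=\sum_{u\ne1}(e_{i_1}e_{i_u}-e_{i_u}e_{i_1})=[e_{i_1},\sum_v e_{i_v}]$, as required.

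The main obstacle I anticipate is precisely this noncrossing bookkeeping: one must check that summing over all hyperplanes through $L$ reassembles the sets $R(i_u,i_1)$ correctly --- which is exactly what Lemma 5.3 guarantees --- and, more delicately, that the reflections stabilizing $H_{i_1}$ setwise while reflecting in a different hyperplane fall under relation $(1)'$. It is this relation, and not any feature of the Lawrence--Krammer model of Theorem 4.2, that forces the $u=1$ blocks to match: the analogous computation for the projectors $p_i$ succeeds only because such products happen to vanish there, whereas in $B_G(\Upsilon)$ the relations $(1)',(4),(5),(6)$ must carry the load. With the identity in hand the four-bracket expansion closes, and applying $\rho$ finishes the proposition.
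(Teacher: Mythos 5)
Your proposal is correct and follows essentially the same route as the paper's own proof: both reduce flatness, via Kohno's criterion and Proposition 2.1 plus the permutation identity coming from relation $(3)$, to the single commutator identity between $e_{i_1}$ and the reflections fixing $L$, and then dispose of it block by block using the fact that $\{ s : H_s \supseteq L,\ s(i_1)=i_u \} = R(i_u,i_1)$ together with relations $(1)$, $(1)'$, $(4)$, $(5)$, $(6)$ --- the paper merely organizes the blocks by its sets $I_1$, $I_2$ instead of your crossing/noncrossing dichotomy, and your explicit treatment of the crossing case via relation $(4)$ is if anything more careful than the paper's. One small slip: the lemma you invoke for that set identity is the paper's Lemma 5.4 (Marin's lemma), not Lemma 5.3, and note that relation $(3)$ alone already makes the $u=1$ blocks on the two sides coincide, so relation $(1)'$ is convenient but not strictly the load-bearing ingredient you suggest.
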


\begin{proof}
 It is enough to deal with the case $\kappa =1$. By Proposition 2.2,
  to show the $G$-invariance we only need to prove
\begin{equation}
\sum _{s:i(s)=i }\mu _s \rho (w) \rho (s) \rho (w)^{-1} - \rho
(w)\rho (e_i ) \rho (w)^{-1} = \sum _{s:i(s)=w(i) }\mu _s \rho (s)
-\rho (e_{w(i)} )
\end{equation}

for any $w\in G$.  By (3) of Definition 1.1, we have $\rho (w)\rho
(e_i ) \rho (w)^{-1}=\rho (we_i w^{-1})=\rho (e_{w(i)})$. We also
have $\{wsw^{-1} | i(s) = i \}= \{s| i(s)= w(i) \}$  and $\mu _s =
\mu _{wsw^{-1}}$, so identity (12) follows.

Let $L$ be any codimension 2 edge for the arrangement $\mathcal
{A}$, and let $H_{i_1 }, \cdots , H_{i_N } $ be all the hyperplanes
in $\mathcal {A}$ containing $L$. By Theorem 3.1, to prove $\rho
(\bar{\Omega } _G ) $ is flat we need to show for any $u$
\begin{equation}
[ \sum _{s: i(s)=i_u } \mu _s \rho (s) -\rho ( e_{i_u } ) , \sum
_{v=1} ^{N} ( \sum _{s:i(s) =i_v } \mu _s \rho (s) -\rho (e_{i_v }
)) ] =0.
\end{equation}

Now remember the connection $ \kappa \sum _{i\in P } (\sum
_{s:i(s)=i }\mu _s \rho (s)  \omega _i )$ is flat by proposition
2.1, so (13) is equivalent to

\begin{equation}
[ \sum _{s: i(s)=i_u } \mu _s \rho (s) , \sum _{v=1} ^{N} \rho
(e_{i_v } ) ]+[ \rho ( e_{i_u } ) , \sum _{v=1} ^{N} ( \sum _{s:i(s)
=i_v } \mu _s \rho (s) -\rho (e_{i_v } )) ]=0
\end{equation}

Because for any $s$ such that $i(s)= i_u $, there is $\{ s(H_{i_1
}), \cdots ,s(H_{i_N })\}= \{H_{i_1 }, \cdots ,H_{i_N } \}$, so
\begin{equation}
\begin{split}
\rho (s) \sum_{v=1 } ^N \rho (e_{iv }) - \sum_{v=1 } ^N \rho (e_{iv
})\rho (s) &=(\rho (s) \sum_{v=1 } ^N \rho (e_{iv }) \rho (s)^{-1} -
\sum_{v=1 } ^N \rho (e_{iv }) )\rho (s)\\
& = (\sum_{v=1 }^N \rho (e_{s(i_v) }) - \sum _{i=1} ^N \rho (e_{i_v
}))=0
\end{split}
\end{equation}
So (14) is equivalent to
\begin{equation}
[ \rho ( e_{i_u } ) , \sum _{v=1} ^{N} ( \sum _{s:i(s) =i_v } \mu _s
\rho (s) -\rho (e_{i_v } )) ]=0.
\end{equation}

We define $I_1 =\{ 1\leq v\leq N | s(i_v) =i_u , for\ some\ s\in
\mathcal {R}\}$, and $I_2 = \{ 1\leq v\leq N | s(i_v) \neq i_u ,
for\ any\ s\in \mathcal {R}\}$. There is $\{1,2,\cdots, N \}=I_1
\coprod I_2$.

\begin{equation}
\begin{split}
[ \rho ( e_{i_u } ) , \sum _{v=1} ^{N} ( \sum _{s:i(s) =i_v } \mu _s
\rho (s) -\rho (e_{i_v } )) ] &= -\sum _{v\in I_1  } (\rho(e_{i_u }
e_{i_v }) -\sum _{s: s(i_v) =i_u } \mu _s \rho(e_{i_u }) \rho (s) )\\
&+\sum_{v\in I_1 } (\rho(e_{i_v } e_{i_u }) -\sum _{s: s(i_u ) =i_v
} \mu _s \rho (s)
\rho(e_{i_u }) )\\
&+ \sum _{v\in I_2 } (\rho(e_{i_u } e_{i_v}) -\rho(e_{i_v } e_{i_u })) \\
&=0.
\end{split}
\end{equation}

Where we used relation (5),(6) in Definition 1.1.

Flatness of $ \bar{ \Omega} _G $ can be proved similarly.

\end{proof}

\begin{thm}
Using notations in section 4. The map $w\mapsto \iota (w) $, $e_i
\mapsto p_i $ extends to a representation $B_G (\Upsilon)
\rightarrow End(V_G )$. So from $B_G (\Upsilon )$ we can obtain the
generalized Lawrence-Krammer representation.
\end{thm}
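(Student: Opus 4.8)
The plan is to show that the assignment $\phi(T_w)=\iota(w)$ for $w\in G$ and $\phi(e_i)=p_i$ for $i\in P$ respects every defining relation $(0)$--$(6)$ of Definition 1.1; since those relations present $B_G(\Upsilon)$, verifying them forces $\phi$ to extend to an algebra homomorphism $B_G(\Upsilon)\to End(V_G)$. Throughout I assume, as in the statement, that the $p_i$ satisfy the two conditions of Theorem 4.2, so that in particular $\alpha_{i,j}=\sum_{s\in R(i,j)}\mu_s$ and $\Omega_{LK}$ is flat and $G$-equivariant. Relation $(0)$ is immediate because $\iota$ is a representation of $G$, and relation $(2)$ is the identity $p_i^2=m_ip_i$, which holds since $p_i$ is a projector onto $\mathbb{C}v_i$ normalized by $p_i(v_i)=m_iv_i$.

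For relations $(1)$, $(1)'$ and $(3)$ the key input is the $G$-equivariance already recorded in the proof of Theorem 4.2, namely $\iota(w)\,p_i\,\iota(w)^{-1}=p_{w(i)}$ for all $w\in G$ and $i\in P$. Relation $(3)$ is exactly this identity in the case $w(j)=i$, rewritten as $\iota(w)p_j=p_i\iota(w)$. For $(1)$ and $(1)'$, whenever $w(H_i)=H_i$ we have $w(i)=i$, hence $\iota(w)(v_i)=v_i$; since the image of $p_i$ lies in $\mathbb{C}v_i$ this gives $\iota(w)p_i=p_i$, and substituting into $\iota(w)p_i\iota(w)^{-1}=p_i$ yields $p_i\iota(w)=p_i$ as well. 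For $(1)$ one takes $w=s_i$. I note that this verifies $(1)'$ without even invoking the noncrossing hypothesis, which is harmless.

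Relations $(5)$ and $(6)$ are precisely the two identities $p_ip_j=\bigl(\sum_{s\in R(i,j)}\mu_s\iota(s)\bigr)p_j$ and $p_ip_j=p_i\bigl(\sum_{s\in R(i,j)}\mu_s\iota(s)\bigr)$ derived in the proof of Theorem 4.2, once one observes that by definition $\{\,s\in R:\iota(s)(v_j)=v_i\,\}=R(i,j)$. These two equalities are exactly the two-sided statement of relation $(5)$, and relation $(6)$ is the degenerate case $R(i,j)=\emptyset$, where the right-hand sides vanish and $p_ip_j=0$.

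The main obstacle is relation $(4)$: I must show $p_ip_j=p_jp_i$ when $H_i\pitchfork H_j$. Computing directly, $p_ip_j$ has image contained in $\mathbb{C}v_i$ while $p_jp_i$ has image in $\mathbb{C}v_j$, so for $i\neq j$ the two operators can agree only if both vanish; I therefore aim to prove $R(i,j)=R(j,i)=\emptyset$ under the transversality hypothesis. Here I invoke Lemma 5.3 (Marin): any $s\in R(i,j)$ satisfies $H_s\supseteq H_i\cap H_j$, so the pitchfork condition $\{\,k\in P:H_k\supseteq H_i\cap H_j\,\}=\{i,j\}$ forces $H_s\in\{H_i,H_j\}$. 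But a pseudo reflection with hyperplane $H_j$ fixes $H_j$ and so cannot send it to $H_i$, while one with hyperplane $H_i$ would give $H_j=s^{-1}(H_i)=H_i$; both are contradictions, so $R(i,j)=\emptyset$, and symmetrically $R(j,i)=\emptyset$. The identity from Theorem 4.2 then gives $p_ip_j=0=p_jp_i$, completing relation $(4)$. Having checked $(0)$--$(6)$, the map $\phi$ extends to the desired representation $B_G(\Upsilon)\to End(V_G)$, and its restriction to the subrepresentations $V_N$ of Lemma 4.1 recovers the generalized Lawrence--Krammer representation.
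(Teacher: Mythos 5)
Your proposal is correct and takes essentially the same route as the paper's own proof: a relation-by-relation verification of Definition 1.1, with relations $(5)$ and $(6)$ coming from the identities $p_i p_j = \sum_{s\in R(i,j)}\mu_s\,\iota(s)p_j = \sum_{s\in R(i,j)}\mu_s\,p_i\iota(s)$ already established in the proof of Theorem 4.2, and relation $(4)$ reducing to $R(i,j)=\emptyset$. Two small notes: the Marin lemma you cite is Lemma 5.4 (not 5.3) in the paper's numbering, and your explicit argument that $H_i \pitchfork H_j$ forces $R(i,j)=\emptyset$ supplies a step the paper's proof leaves implicit, which is a welcome addition rather than a deviation.
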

\begin{proof}
We only need to certify that $\iota (w)$'s and $p_i $'s satisfy
those relations in Definition 1.1. Relation $(0)$ is evident.
Relation $(1)$ and $(1)^{'}$ are because $p_i $ is a projector to
$\mathbb{C}v_i$. Relation $(3)$ is by definition of $p_i $ and the
fact $\alpha _{i,j}= \alpha _{w(i),w(j)}$. When $R(i,j)=\emptyset$,
by definition we have $p_i p_j =p_j p_i =0$ so relation $(4), (6)$
follows. For any $i,j,k$, we have

$p_i p_j (v_k )= \alpha _{j,k} p_i (v_j) = \alpha _{j,k} \alpha
_{i,j} v_i $ and

$(\sum _{s\in R(i,j)} \mu _s \iota (s)  )p_j (v_k )= \alpha _{j,k}
(\sum _{s\in R(i,j)} \mu _s \iota (s)  )(v_j)= \alpha _{j,k} (\sum
_{s\in R(i,j)} \mu _s ) v_i  =\alpha _{j,k} \alpha _{i,j} v_i , $

so relation $(5)$ is certified.
\end{proof}

Suppose $\Gamma$ is a finite type simply laced Dynkin diagram,
denote the associated Coxeter group and Artin group as $W_{\Gamma}$,
$A_{\Gamma}$ respectively. Suppose $W_{\Gamma}$ is realized as a
reflection group in $U(V)$. In this case the data $\Upsilon$
consists of two constants $m, \mu$ since all reflections of
$W_{\Gamma}$ lie in the same conjugacy class. Suppose $\mu\neq 0$,
so by Lemma 5.3 we can set $\mu =1$. Thus we denote the algebra of
Definition 1.1 for $W_{\Gamma}$ as $B_{W_{\Gamma}} (m)$. Set
$M_{\Gamma} = V\setminus \cup_{i\in P} H_i $. Let $(E,\rho)$ be a
finite dimensional representation of $B_{W_{\Gamma}} (m )$. By
proposition 5.1, the connection
$$ \rho (\bar{\Omega} _{\Gamma})= \kappa \sum _{i\in P}
( \rho (s_i ) -\rho (e_i ) ) \omega_i $$ induces a flat connection
on the bundle $M_{\Gamma} \times _{W_{\Gamma}} E $, which is a
linear bundle on $M_{\Gamma} /W_{\Gamma}$. Denote the resulted
monodromy representation of $\pi _1 ( M_{\Gamma} / W_{\Gamma} )
\cong A_{\Gamma} $ as $\bar{\rho}$. We have

\begin{thm}
If $m\notin \mathbb{Z}$ or $m\in \mathbb{Z}$ with $m>3$,  the
monodromy representations $\bar{\rho}$ of $A_{\Gamma}$ constructed
above factor through the simply laced BMW algebra $B_{\Gamma}(\tau ,
l)$, for $\tau = \frac{q^{1-m} - q^{m-1} + q^{-1} -q}{q^{-1} -q}$,
$l=q^{m-1} $. Where $q= \exp \kappa \pi \sqrt{-1}$.
\end{thm}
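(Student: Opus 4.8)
The plan is to show that the operators $X_i:=\bar\rho(\sigma_i)$, where $\sigma_i$ is the Artin generator of $A_\Gamma$ attached to the node $i\in P$, satisfy all the defining relations of $B_\Gamma(\tau,l)$ in Table 2, after fixing the image of $E_i$. Since every relation in that presentation involves the generators of at most two nodes and $A_\Gamma$ is generated by the $\sigma_i$, it suffices to verify each relation inside the parabolic subgroup of $A_\Gamma$ generated by the one or two nodes it involves. The localization tool is Theorem 2.3: for generic $\kappa$ the restriction of $\bar\rho$ to a parabolic subgroup $A_{G_0}$ is the monodromy of the restricted connection $\Omega_0=\kappa\sum_{i:\,I\subset H_i}(\rho(s_i)-\rho(e_i))\omega_i$ on $M_0$. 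Thus everything reduces to the rank-one and rank-two local pictures.

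First I would treat the single-node relations. The residue of $\rho(\bar\Omega_\Gamma)$ along $H_i$ is $A_i=\rho(s_i)-\rho(e_i)$. From $s_i^2=1$, $s_ie_i=e_is_i=e_i$ and $e_i^2=me_i$ in Definition 1.1 one checks that $s_i$ and $e_i$ commute and that $A_i$ has eigenvalues $1$, $-1$, $1-m$ (the value $1-m$ on the image of $e_i$, the value $1$ on the part of the $+1$-eigenspace of $s_i$ killed by $e_i$, and $-1$ on the $-1$-eigenspace of $s_i$). As $\sigma_i$ is realized by the half-loop exchanging a point with its $s_i$-image, the local monodromy is conjugate to $\rho(s_i)\exp(\pi\sqrt{-1}\,\kappa A_i)$, with eigenvalues $q$, $-q^{-1}$, $q^{1-m}=l^{-1}$. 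The hypothesis $m\notin\mathbb{Z}$ or $m>3$ makes these three values distinct, so $X_i$ satisfies exactly $(X_i-q)(X_i+q^{-1})(X_i-l^{-1})=0$. Setting $E_i:=\frac{l}{q^{-1}-q}(X_i^2+(q^{-1}-q)X_i-1)$ gives a scalar multiple of the spectral projector onto the $l^{-1}$-eigenspace; a spectral computation then yields $X_iE_i=l^{-1}E_i$ and $E_i^2=\tau E_i$ with $\tau=\frac{q^{1-m}-q^{m-1}+q^{-1}-q}{q^{-1}-q}$, noting that $q^{-1}-q$ is exactly the BMW quantity $\frac{l-l^{-1}}{1-\tau}$ under these parameters.

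Next come the two-node relations, where the two cases of Table 2 match the only two rank-two parabolics possible in a simply-laced type: for $i\nsim j$ the edge $L=H_i\cap H_j$ carries an $A_1\times A_1$ arrangement, and for $i\sim j$ an $A_2$ arrangement (no $B_2$ or $G_2$ can occur in a simply-laced root system). In the $A_1\times A_1$ case relation (4) gives $[A_i,A_j]=0$, the restricted connection splits over the two transverse hyperplanes, and $A_{A_1\times A_1}\cong\mathbb{Z}^2$ has commuting monodromies; hence $X_iX_j=X_jX_i$, and since $E_i,E_j$ are polynomials in $X_i,X_j$ respectively we obtain $X_iE_j=E_jX_i$ and $E_iE_j=E_jE_i$. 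In the $A_2$ case I would use Theorem 2.3 to identify the restriction of $\bar\rho$ to $A_{A_2}\cong B_3$ with the monodromy of $\Omega_0=\kappa\sum_{H\supset L}(\rho(s_H)-\rho(e_H))\omega_H$; because the three operators $\{\rho(s_H),\rho(e_H)\}_{H\supset L}$ satisfy the $3$-strand Brauer relations of Lemma 3.1 (these being relations (0)--(6) specialized to $L$), $\Omega_0$ is Marin's connection $\bar\Omega_3$ for a representation of $B_3(m)$, and Theorem 3.2 with $n=3$ shows the restricted monodromy factors through $B_3(\tau,l)=B_{A_2}(\tau,l)$ with the same $\tau$ and $l$. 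This gives the remaining relations $X_iX_jX_i=X_jX_iX_j$, $X_iX_jE_i=E_jX_iX_j$ and $E_iX_jE_i=lE_i$ for $i\sim j$, and the $E_i$ appearing there agrees with the one from the rank-one step, since in both it is the same spectral function of $X_i$.

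The main obstacle I expect is this $A_2$ step: one must check that $\Omega_0$ is genuinely (a representation of) $\bar\Omega_3$ with matching Brauer parameter, so that Theorem 3.2 applies verbatim and returns the identical $\tau,l$. This rests on the parabolic subalgebra of $B_{W_\Gamma}(m)$ generated by the reflections and semi-idempotents of the three hyperplanes through $L$ being a homomorphic image of $B_{A_2}(m)=B_3(m)$, for which the hypothesis $m\notin\mathbb{Z}$ or $m>3$ is exactly Wenzl's semisimplicity bound for $B_3(m)$; together with the genericity of $\kappa$ required by Theorems 2.3 and 3.2 this secures a clean factorization. Once all relations of Table 2 are checked, the result follows, since $B_\Gamma(\tau,l)$ is by definition the quotient of $\mathbb{C}A_\Gamma$ under $\sigma_i\mapsto X_i$ by precisely these relations, so $\bar\rho$ factors through $B_\Gamma(\tau,l)$.
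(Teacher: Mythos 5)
Your proposal is correct and takes essentially the same route as the paper: define $E_i$ as the same quadratic polynomial in $X_i=\bar\rho(\sigma_i)$, use Marin's parabolic restriction theorem (the paper's Theorem 2.1, which you cite as Theorem 2.3) to localize every relation of Table 2 at a rank-one or rank-two parabolic, and invoke Theorem 3.2 for $B_2$, $B_3$ there. The details you add beyond the paper's sketch --- the explicit spectral computation at a single node, and the check that the operators attached to the three hyperplanes through an $A_2$-edge satisfy the $B_3(m)$ relations of Lemma 3.1 so that Theorem 3.2 applies verbatim --- are precisely the steps the paper leaves implicit, and your coefficient $\frac{l}{q^{-1}-q}$ in the definition of $E_i$ is the one actually consistent with Table 2 (the paper's $\frac{q^{-1}-q}{l}$ appears to be a typo).
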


\begin{proof}
Suppose $\Sigma$ is the set of nodes of $\Gamma$, $\{ \sigma_i
\}_{i\in \Sigma}$ is the set of generators of $A_{\Gamma}$ in a
canonical presentation. For $i\in \Sigma$, set $$X_i =\bar{ \rho }
(\sigma_i ) ,\  E_i = \frac {q^{-1} -q}{l} (\bar{\rho } (\sigma_i )
^2 + (q^{-1} -q) \bar{\rho } (\sigma_i )-1).$$ We need to show $\{
X_i , E_i \}_{i\in \Sigma}$ satisfies relations of $B_{\Gamma} (\tau
,l)$ in table 2. The proof is completely similar to Theorem 3.2, so
we content with giving  a sketch. Denote the number of nodes in
$\Gamma$ as $n(\gamma )$. We only consider the cases when $\Gamma$
is irreducible. When $n(\Gamma) =1$ or $2$, the Artin group
$A_{\Gamma}$ is braid group $B_2$, $B_3$ respectively. So the
statement follows from Theorem 3.2.  Suppose $n(\Gamma) \geq 3$. The
fact that $\Gamma$ is simply laced enable us to reduce the statement
of these cases to cases when $n(\Gamma) =1$ or $2$, by using Theorem
2.1.  Suppose  $i,j \in \Sigma $ and $i\sim j$.  Then the parabolic
subgroup $W_{i,j}$ of $W_{\Gamma}$ generated by $s_i ,s_j$ is
isomorphic to the symmetric group $S_3$. By applying Theorem 2.1 to
$W_{i,j}$, we prove that $X_i ,X_j , E_i ,E_j$ satisfies relations
in table 2.  The cases when $i \nsim j$ can be proved similarly.

\end{proof}

\section{Cases of Dihedral Groups}

\nd{\bf  Dimension and Basis} Denote the dihedral group of type $I_2
(m)$ as $G_m $. The arrangement of its reflection hyperplanes can be
explained with the following Figure 2.
\begin{figure}[htbp]

  \centering
  \includegraphics[height=4.5cm]{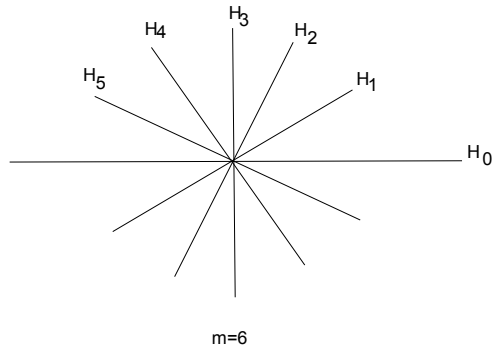}
  \caption{The arrangement of $G_6$ }
\end{figure}

There are $m$ lines(hyperplanes) passing the origin. The angle
between every two neighboring lines is $\pi /m$. Suppose the
$x$-axis is one of the reflection lines and denote it as $H_0 $, we
denote these lines by $H_0 , H_1 ,\cdots , H_{m-1} $ in
anticlockwise order as shown in above graph.  Denote the reflection
by $H_i $ as $s_i $. The set of reflections in $G_m$ is $R=\{ s_i \}
_{0\leq i\leq m-1 }$. Denote the rotation of $2j \pi /m $ in
anticlockwise order as $r_j $. It is well known that $G_m$ is
generated by $s_0 ,s_1 $ with the following presentation
$$< s_0\ ,s_1 , \ | ( s_0 s_1 )^m =1 , s_0 ^2 =s_1 ^2 =1 >.$$

As a set $G_m =\{ s_i ,r_i \}_{0\leq i\leq m-1}$.  Under this
presentation, $s_i $ can be determined inductively in the following
way.  $s_1 =s_1 $, $s_2 = s_1 s_0 s_1 $ and $s_i = s_{i-1} s_{i-2}
s_{i-1} $. By $[s_i s_j \cdots ]_{k}$ we denote the length $k$ word
starting with $s_i s_j $, in which $s_i $ and $s_j $ appear
alternatively. The word $[\cdots s_0 s_1]_k$ is defined similarly.
Then $s_i = [s_1 s_0 \cdots ]_{2i-1} $, where $s_m =[s_1 s_0 \cdots
]_{2m-1 } =s_0 $.

As for the algebra $B_{G_m} (\Upsilon )$ , we choose the index set
$P_{G_m }= \{0,1,2,\cdots , m-1 \}$. $B_{G_m } (\Upsilon )$ is
generated by $\{ s_i , e_i \} _{i\in P_{G_m} }$. The data $\Upsilon
$ is $\{ \mu _i , \tau _i \}_{i\in P_{G_m}}$.  For later
convenience, for $k \in \mathbb{Z}$, we define $s_k $, $H_k$, $e_k
$, $\mu _k $, $\tau _k $ as $s_{[k]}$, $H_{[k]}$, $e_{[k]}$, $\mu
_{[k]}$, $\tau _{[k]}$ respectively. Where  $[k]$ is the unique
number in $\{0,1,\cdots , m-1 \}$ such that $k\equiv [k] mod (m
\mathbb{Z})$.

The structure of $B_{G_m}(\Upsilon )$ when $m$ is odd is quite
different from cases when $m$ is even.

\begin{itemize}
  \item When $m=2k+1$ is odd, we have $i\sim j$ for any $i,j \in P_{G_{m}}$.
  Which implies $\mu _i =\mu _j $ and $\tau _i =\tau _j$ for any $i,j\in P_{G_{m}}
  $. So the data $\Upsilon$ consists of $\{ \tau _0 , \mu _0 \}$
  essentially. We always suppose $\mu_0 \neq 0$, and by Lemma 5.3 we
  can suppose $\mu_0 =1$. When $i+j\in 2\mathbb{Z}$, $R(i,j)= \{ s_{\frac{i+j}{2}} \}$; when
$i+j+1 \in
  2\mathbb{Z}$, $R(i,j) =\{ s_{\frac{i+j+m}{2}} \}$.
  \item When $m=2k$ is even, $i\sim j$ if and only if $i+j \in
  2\mathbb{Z}$. Which implies  $\mu _i =\mu _j $ and $\tau _i =\tau
  _j$  if $i+j \in 2\mathbb{Z}$. So the data $\Upsilon$ consists of $\{ \mu _0 , \mu _1 , \tau _0 ,\tau _1
  \}$ essentially. When  $i+j \in 2\mathbb{Z}$,  $R(i,j)=
  \{s_{\frac{i+j}{2}}, s_{\frac{i+j+m}{2}}
  \}$; when $i+j+1 \in 2\mathbb{Z}$, $R(i,j)= \emptyset$.
\end{itemize}

It is easy to see when $m=2k$ is even, the relation $(1)^{'}$ in
Definition 5.1 for $B_{G_m}(\Upsilon )$ is equivalent to: $ s_{i+k}
e_i =e_i s_{i+k} =e_i $ for any $i$.

\begin{thm}
$(1)$ When $m$ is odd, the algebra $B_{G_m}(\Upsilon)$ has dimension
$2m + m^2 $, and has the set $\{ s_i , r_i \} _{0\leq i\leq m-1 }
\cup \{ s_i e_j \} _{0\leq i,j\leq m-1}$ as a basis.

$(2)$ When $m=2k$ is even, the algebra $B_{G_m}(\Upsilon)$ has
dimension $2m+ \frac{m^2}{2}$,and has the set $\{ s_i , r_i \}
_{0\leq i\leq m-1 } \cup \{ s_i e_{2j} \}_{0\leq i,j \leq k-1} \cup
\{ s_i e_{2j+1} \}_{0\leq i,j\leq k-1 }$ as a basis.

\end{thm}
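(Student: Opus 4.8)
The plan is to squeeze $\dim B_{G_m}(\Upsilon)$ between matching upper and lower bounds: the upper bound by collapsing the spanning set of Lemma 5.2 with the dihedral relations, and the lower bound by realizing the proposed basis as an honest basis of a $B_{G_m}(\Upsilon)$-module built from left multiplication.

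For the upper bound I would start from the spanning set $\{T_w e_{i_1}\cdots e_{i_M}\}$ of Lemma 5.2, in which the $e$'s pairwise commute and carry distinct indices. The decisive dihedral fact is that for $m\ge 3$ any two distinct lines meet only at the origin, which lies on all $m$ lines, so $\{k\in P\mid H_i\cap H_j\subset H_k\}=P\ne\{i,j\}$; hence $H_i\pitchfork H_j$ fails, the origin is a noncrossing edge, and relation $(4)$ never applies. Thus no two distinct $e_i,e_j$ commute (relations $(5)$/$(6)$ give $e_ie_j=(\sum_{s\in R(i,j)}\mu_s s)e_j$, not symmetric in $i,j$), forcing $M\le 1$, so the algebra is spanned by $\{T_w\}_{w\in G_m}$ together with $\{T_w e_i\}$. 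I would then invoke $s_ie_i=e_i$ to get $T_we_i=T_{ws_i}e_i$, so $T_we_i$ depends only on the coset $w\langle s_i\rangle$; since each coset of this order-two group contains exactly one reflection, $T_we_i=s_{i'}e_i$, yielding at most $m$ elements for each of the $m$ indices, i.e. the bound $\dim\le 2m+m^2$ for odd $m$. For $m=2k$ even, relation $(1)'$ supplies the extra relation $s_{i+k}e_i=e_i$, and since $s_is_{i+k}=r_k$ is central, $T_we_i$ depends only on the coset of the order-four group $\langle s_i,s_{i+k}\rangle$; there are $m/2$ such cosets per index, giving $\dim\le 2m+m^2/2$, with $s_0,\dots,s_{k-1}$ serving as representatives because $s_{i+k}=s_ir_k$ lies in the same coset as $s_i$.

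For the lower bound I would let $A'$ be the free vector space on the proposed basis $\mathcal B$, with distinguished symbol $\mathbf 1=r_0$, and define a left operator for each generator by exactly the reduction rules used above: $s_\ell$ acts by group multiplication followed by coset normalization (rewriting $r_te_j$ as $(r_ts_j)e_j$ and, for even $m$, folding the reflection index into $\{0,\dots,k-1\}$ via $s_{i}e_j=s_{i-k}e_j$), while $e_\ell$ acts by first moving past the group factor with relation $(3)$, $e_\ell s_i=s_ie_{s_i(\ell)}$, and then collapsing the product $e_{s_i(\ell)}e_j$ through relation $(2)$ when the indices agree, or relations $(5)$/$(6)$ with the explicit description of $R(i,j)$ (one reflection for odd $m$; a two-element set or $\emptyset$ for even $m$) otherwise. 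Verifying that these operators satisfy all of $(0)$--$(6)$ makes $A'$ a $B_{G_m}(\Upsilon)$-module, and then $x\mapsto x\cdot\mathbf 1$ sends each $b\in\mathcal B$ to its own basis symbol, proving the $b$'s linearly independent; with the spanning bound this forces equality and identifies $\mathcal B$ as a basis in both parities.

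The main obstacle is precisely this relation-checking, which is equivalent to proving associativity of the normal-form multiplication on $A'$; it reduces to a finite but somewhat intricate case analysis organized by the parities of the relevant index sums and by whether each group factor is a rotation or a reflection, with relation $(5)$ and, for even $m$, relation $(1)'$ the most delicate. I would finally note that $m=2$ is genuinely exceptional, since then $H_0\pitchfork H_1$ does hold and the origin is a crossing edge; this reducible case $I_2(2)=A_1\times A_1$ is handled separately and does not affect the argument for $m\ge 3$.
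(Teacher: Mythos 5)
Your proposal is correct in outline, and its first half is essentially the paper's own argument: the upper bound comes from the spanning set of Lemma 5.2 plus the dihedral geometry (for $m\geq 3$ the origin lies on all $m$ lines, so every intersection $H_i\cap H_j$ is a noncrossing edge, relations $(5)$/$(6)$ collapse any product $e_ie_j$ with $i\neq j$, and $s_ie_i=e_i$, together with $(1)'$ in the even case, cuts the prefixes down to one reflection per coset). Where you genuinely diverge is the lower bound. You propose the regular-module route: impose the reduction rules on the free vector space over the candidate basis, verify relations $(0)$--$(6)$ for the resulting operators, and evaluate at $\mathbf{1}$. The paper instead writes the same normal-form multiplication table as an abstract algebra $A_m$ (its $T_{i,j}$ is your $s_ie_j$) and proves associativity \emph{indirectly} (Lemma 6.1): by Theorem 5.2 the $m$-dimensional generalized LK representation is an honest representation of $B_{G_m}(\Upsilon)$, it is irreducible for $\Upsilon$ in a dense open set, so Wedderburn--Artin gives $\dim B_{G_m}(\Upsilon)\geq 2m+m^2$ generically, forcing the spanning set to be a basis and the table to be associative there; since the structure constants are polynomial in $\tau$, associativity persists for all $\Upsilon$, and the mutually inverse maps $\phi,\psi$ finish the identification. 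So the relation-checking you defer is exactly the associativity statement the paper obtains for free from representation theory plus Zariski density: your route is more elementary, self-contained, and uniform in the parameters (no genericity step, no irreducibility criterion), but the ``finite but intricate case analysis'' you postpone is the entire content of the proof and must actually be carried out, the delicate interactions being relation $(5)$ against $(1)'$ in the even case, as you anticipate. Your side remark on $m=2$ is correct and absent from the paper: there $H_0\pitchfork H_1$ holds, relation $(1)'$ does not apply, the spanning set retains $e_0e_1$, and the algebra has dimension $9$ rather than $2m+m^2/2=6$, so the theorem should indeed be read with $m\geq 3$.
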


When $m$ is odd, let $A_m $ be the vector space spanned by some
generators $\{S_i ,R_i \}_{0\leq i\leq m-1} \cup \{ T_{i,j}\}_{0\leq
i,j \leq m-1}$. For convenience for any $i,j \in \mathbb{Z}$, let
$S_i = S_{[i]}$, $R_{i}= R_{[i]}$ and $T_{i,j} = T_{[i],[j]}$.
Define a product on $A_m $  by the following relations $(1), (2),
(3)$.

\begin{enumerate}
\item[(1)] $S_i S_j = R_{i-j}$,$R_i R_j = R_{i+j}$, $S_i R_j =
S_{i-j}$, $R_j S_i = S_{i+j}$.
\item[(2)] $S_l T_{i,j}= T_{l-i+j,
j}$,$ T_{i,j} S_l =T_{i-j+l, 2l-j}$, $R_l T_{i,j} =T_{i+l ,j }$,
$T_{i,j} R_l =T_{i-l, j-2l}$.
\item[(3)] \[
 T_{i,j} T_{p,q}= \left\{
\begin{array}{lll}
 \tau _q T_{i-p+q, q} & \ when\  2p-j-q \in m\mathbb{Z};\\
 T_{v  _{i,j,p,q},q} & \ when\   2p-j -q  \notin m\mathbb{Z},  and\   [2p-j]+q\in 2\mathbb{Z};\\
 T_{u _{i,j,p,q},q} & \ when\   2p-j -q \notin m\mathbb{Z}, and\
[2p-j]+q \notin  2\mathbb{Z} .
\end{array}
\right.
\]

\end{enumerate}

Where $v _{i,j,p,q}= (2i+[2p-j]+q-2p)/2$, $u _{i,j,p,q}=
(2i+[2p-j]+q+m-2p)/2.$

\begin{lem}
Above product makes $A_m$ into an associative algebra.
\end{lem}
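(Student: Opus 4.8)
The plan is to verify associativity $(xy)z = x(yz)$ directly on all triples of basis elements, organising the work by the type of each factor. Write $\mathcal{G} = \mathrm{span}\{S_i, R_i\}_{0\le i\le m-1}$ and $I = \mathrm{span}\{T_{i,j}\}_{0\le i,j\le m-1}$, so that $A_m = \mathcal{G}\oplus I$ as vector spaces. Relations $(1)$ are exactly the multiplication table of the dihedral group $G_m$ under the identification $R_i \leftrightarrow r_i$ and $S_i\leftrightarrow s_i$; one first checks that $(1)$ is closed and reproduces $s_i^2=1$, $r_ir_j=r_{i+j}$, $s_ir_j=s_{i-j}$, $r_js_i=s_{i+j}$, so that $(\mathcal{G},\cdot)\cong\mathbb{C}G_m$ is associative for free. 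Relations $(2)$ and $(3)$ show that $(\mathcal G)(I)$, $(I)(\mathcal G)$ and $(I)(I)$ all land in $I$, so $I$ is a two-sided ideal with $A_m/I\cong\mathbb{C}G_m$. This splits the remaining verification into the eight possible triples built from the two types $\mathcal G$ and $I$.

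First I would dispose of every triple containing at least one factor from $\mathcal G$. These are the bimodule axioms $(gg')T=g(g'T)$, $T(gg')=(Tg)g'$, $(gT)g'=g(Tg')$ and the compatibility identities $(gT)T'=g(TT')$, $(TT')g=T(T'g)$, $(Tg)T'=T(gT')$ between the $G_m$-actions and the product on $I$. Each is a short index computation from $(2)$ and $(3)$, and the main labour-saving device is $G_m$-equivariance: left multiplication by $R_\ell$ and $S_\ell$ acts on $I$ by the bijections $T_{i,j}\mapsto T_{i+\ell,j}$ and $T_{i,j}\mapsto T_{\ell-i+j,j}$, and since every rotation and reflection is invertible in $\mathcal G$, one may normalise indices freely. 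In particular $R_iT_{0,j}=T_{i,j}$ lets one replace each $T_{i,j}$ by $R_i\cdot T_{0,j}$, so once the compatibilities are in hand all remaining associativity among $T$'s may be tested modulo translation by $\mathcal G$.

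The heart of the proof, and the step I expect to be the main obstacle, is the triple $(T_{i,j}T_{p,q})T_{a,b}=T_{i,j}(T_{p,q}T_{a,b})$, where relation $(3)$ must be applied twice. Here one tracks two nested layers of case splits: for each inner product the trichotomy of $(3)$ (whether $2p-j-q\in m\mathbb{Z}$, and otherwise whether $[2p-j]+q$ is even or odd, with the two formulas $v_{i,j,p,q}$ and $u_{i,j,p,q}$), and then the same trichotomy for the outer product. Because $m$ is odd the congruence condition $2p-j-q\in m\mathbb{Z}$ and the parity condition $[2p-j]+q\in2\mathbb{Z}$ are genuinely independent, so the bookkeeping is delicate; one must also check that the scalar $\tau_q$ produced in the degenerate branch matches on both sides and attaches to the correct second index (every $T$-product inherits the rightmost second index). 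I would cut the cases down by first observing that $T_{i,j}T_{p,q}$ depends on its four indices only through $i-p$, $q$ and the residue $[2p-j]$, and that by the left-translation bijection $R_{-i}$ one may assume $i=0$; this reduces the whole step to a finite family of arithmetic identities in these residues, to be checked against the parity patterns.

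Finally, it is worth recording the conceptual source of the formulas, which both motivates and independently checks the computation: the product on $A_m$ is the multiplication of $B_{G_m}(\Upsilon)$ written in the spanning set $\{s_i,\ r_i,\ s_ie_j\}$ via $S_i\leftrightarrow s_i$, $R_i\leftrightarrow r_i$, $T_{i,j}\leftrightarrow s_ie_j$. Indeed $s_ie_j\,s_pe_q = s_is_p\,e_{2p-j}\,e_q$ after moving $s_p$ through $e_j$ by relation $(3)$ of Definition 1.1, and $e_{2p-j}e_q$ collapses by relation $(5)$ to the single middle reflection, producing the $v/u$ trichotomy, while the $\tau_q$ branch is $e_q^2=\tau_qe_q$ from relation $(2)$ combined with $s_qe_q=e_q$ from relation $(1)$. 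This dictionary cannot yet be invoked, since establishing the isomorphism (together with the spanning argument) is precisely what will yield Theorem 6.1 and so presupposes the present lemma; but it tells one in advance what each branch of $(3)$ must evaluate to, turning the combinatorial check of the $(T,T,T)$ case into a guided rather than blind computation.
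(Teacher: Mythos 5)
Your proposal is correct in outline but takes a genuinely different route from the paper, whose proof is indirect and never checks a single triple. The paper does exactly what your final paragraph describes and then declines to do: it invokes the dictionary $T_{i,j}\leftrightarrow s_ie_j$ non-circularly via a genericity argument. Namely, for $\Upsilon$ in a dense open subset of the parameter space the $m$-dimensional Lawrence--Krammer representation of Theorem 5.2 is irreducible, so Wedderburn--Artin gives $\dim B_{G_m}(\Upsilon)\geq 2m+m^2$; combined with the spanning fact (which comes from Lemma 5.2, not from Theorem 6.1, so no circularity arises) the spanning set is a basis for such $\Upsilon$, hence the product of $A_m$ coincides with that of $B_{G_m}(\Upsilon)$ and is associative there; finally, since the structure constants are polynomial in $\tau$, associativity is a polynomial identity valid on a dense set and therefore valid for all parameters. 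So your closing remark that the dictionary ``cannot yet be invoked'' is too pessimistic: making it rigorous by genericity \emph{is} the paper's whole proof, and it also feeds directly into Theorem 6.1. Your direct verification buys what the paper's argument does not: it is elementary, uniform in the parameters, and independent of Theorem 5.2 and of the generic-irreducibility claim (which the paper itself only sketches by reference to Proposition 7.1). Its cost is the case analysis you rightly flag as the main obstacle, which your proposal sketches but does not execute; it does go through, and more easily than your nested trichotomy suggests. Since $m$ is odd, $2$ is invertible modulo $m$, and all three branches of relation $(3)$ collapse to the single formula $T_{i,j}T_{p,q}=c\,T_{i+2^{-1}(q-j),\,q}$ (indices mod $m$), where $c=\tau$ if $2p\equiv j+q \pmod{m}$ and $c=1$ otherwise; here one uses that all $\tau_q$ coincide for odd $m$, which is also what resolves the scalar-matching worry you raise, since in a $(T,T,T)$ triple one side produces $\tau_q$ and the other $\tau_b$ in the corresponding degenerate branches. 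With this normal form both sides of $(T_{i,j}T_{p,q})T_{a,b}=T_{i,j}(T_{p,q}T_{a,b})$ evaluate to $\tau^{\epsilon_1+\epsilon_2}\,T_{i+2^{-1}(b-j),\,b}$, where $\epsilon_1=1$ exactly when $2p\equiv j+q\pmod m$ and $\epsilon_2=1$ exactly when $2a\equiv q+b \pmod m$, and the check is a few lines. Your preliminary reductions (the group part is $\mathbb{C}G_m$, the span of the $T_{i,j}$ is an ideal, the bimodule compatibilities, and the translation to $i=0$, which is legitimate once the compatibility $(gT)T'=g(TT')$ is established) are all correct and are exactly the right scaffolding; a complete proof along your lines would only need to carry out these finite computations, none of which fails.
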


\begin{proof}
In fact above identities are obtained by "looking $(S_i ,  R_i ,
T_{i,j})$ as $(s_i , r_i , s_i e_j )$". We have an indirect proof as
follows. Denote the $m$-dimensional representation of $B_G
(\Upsilon)$ defined in Theorem 5.2 as $\rho_{LK}$, the irreducible
representations of $B_G (\Upsilon)$ induced by the surjection $\pi :
B_G (\Upsilon)\rightarrow \mathbb{C}G$ as $\rho_1 ,\cdots ,\rho_l $.
Denote the parameter space of all $\Upsilon$'s as $\Lambda$. By
similar argument with the proof of $(2)$ of Proposition 7.1, we can
show there is a dense open subset $\mathcal {D}$ of $\Lambda$ such
that if $\Upsilon \in \mathcal {D}$ then the related representation
$\rho_{LK}$ is irreducible. In there cases by Wedderburn-Artin
Theorem we have $\dim B_{G}(\Upsilon) \geq \sum_{i} ( \dim \rho_i
)^2 + (\dim \rho_{LK})^2 = 2m+m^2$. Since the set $\{ s_i , r_i \}
_{0\leq i\leq m-1 } \cup \{ s_i e_j \} _{0\leq i,j\leq m-1}$ always
spans $B_G (\Upsilon)$, so we know when $\Upsilon \in \mathcal {D}$,
this set is a basis of $B_G (\Upsilon)$. Thus the product of $A_m$
is associative if $\Upsilon \in \mathcal {D}$.  So the produce is
associative for all $\Upsilon$.
\end{proof}
\nd{\bf Proof of theorem 6.1}
 When $m$ is odd, denote the algebra above as $A_m (\Upsilon)$. we define a map  $\phi $ as: $\phi (S_i )=s_i $, $\phi (R_i )=r_i
 $, for $0\leq i\leq m-1$; $\phi (T_{i,j}) = s_i e_j $ for $0\leq i,j\leq m-1
 $. It is easy to see $\phi$ extends to a morphism $\phi$ from $A_m (\Upsilon) $
 to  $B_{G_m }(\Upsilon)$. Inversely the map $\psi$: $\psi (s_i) =S_i ,\  \psi (e_i )= T_{i,i}
 \ for\ 0\leq i\leq m-1 $ extends to a morphism $\psi$ from  $B_{G_m
 }(\Upsilon)$ to  $A_m (\Upsilon)$. Since $\psi \phi =id$ and $\phi \psi =id$,
 we know $B_{G_m }(\Upsilon)$ is isomorphic to $A_m (\Upsilon)$ and statement
 $(1)$ follows. The statement $(2)$ can be proved similarly by
 constructing an actual algebra with dimension $2m + \frac{m^2}{2}$
 and prove it is isomorphic to $B_{G_m}(\Upsilon)$.\\

\nd{\bf  Cellular Structures When $m$ is Odd  }   Suppose $m=2k+1$.
let $(\Lambda , M , C ,* )$ be the cellular structure of $\mathbb{C}
G_m $. The algebra $B_{G_m }(\Upsilon)$ has a cellular structure
$(\bar {\Lambda },\bar { M}, \bar {C} ,*)$ as follows.

\begin{itemize}
  \item $\bar {\Lambda } = \Lambda \coprod \{\lambda _{LK } \}$.We
keep the original partial order in $ \Lambda$£» and for any $\lambda
\in \Lambda $, let $\lambda _{LK} \prec \lambda $.

  \item For $\lambda \in \Lambda $, set $ \bar {M} (\lambda )=
M(\lambda ) $ and  $\bar {M} (\lambda _{LK} ) = \{0,1,\cdots ,m-1
\}$.

  \item For $\lambda \in \Lambda $, and $S,T \in \bar {M}
(\lambda)$ set  $\bar {C} _{S,T} ^{\lambda }= C_{S,T} ^{\lambda }$.
For $i,j \in \bar {M} (\lambda _{LK} ) $, set  $\bar {C} _{i,j}
^{\lambda _{LK }} = s_{\frac{i+j}{2}} e_j $ if $i+j \in
2\mathbb{Z}$, and $\bar {C} _{i,j} ^{\lambda _{LK }} =
s_{\frac{i+j+m}{2}} e_j $ if $i+j \notin 2\mathbb{Z}$.

  \item Define $*$ to be the involution  in Lemma 5.5.
\end{itemize}

\begin{thm}
Above $(\bar {\Lambda },\bar { M}, \bar {C} ,*)$ defines a cellular
structure for $B_{G_m }(\Upsilon)$.
\end{thm}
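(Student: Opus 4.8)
The plan is to check the three Graham--Lehrer axioms (C1)--(C3) for $(\bar\Lambda,\bar M,\bar C,*)$, organizing everything around the vector-space splitting $B_{G_m}(\Upsilon)=\mathbb{C}G_m\oplus J$, where $J=\mathrm{span}\{s_ie_j\}_{0\le i,j\le m-1}$ is the kernel of the algebra surjection $\pi\colon B_{G_m}(\Upsilon)\to\mathbb{C}G_m$ of Theorem 5.1, hence a two-sided ideal. For (C1) I would show the cell elements exhaust the basis of Theorem 6.1: the cells indexed by $\Lambda$ reproduce the chosen cellular basis of $\mathbb{C}G_m$ (the $2m$ group elements $s_i,r_i$), while $\lambda_{LK}$ contributes $m^2$ elements $\bar C^{\lambda_{LK}}_{i,j}$. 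Since $m$ is odd, $2$ is invertible modulo $m$, so writing $\bar C^{\lambda_{LK}}_{i,j}=s_ce_j$ with $2c\equiv i+j\ (\mathrm{mod}\ m)$, for fixed $j$ the map $i\mapsto c$ is a bijection of $\{0,\dots,m-1\}$; thus $\{\bar C^{\lambda_{LK}}_{i,j}\}_{i}=\{s_le_j\}_{l}$, and ranging over $j$ recovers exactly the $m^2$ elements $\{s_le_j\}$. Hence $\bar C$ is injective with image the Theorem 6.1 basis.

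For (C2) I would fix the cellular structure of $\mathbb{C}G_m$ to be one whose anti-involution is $w\mapsto w^{-1}$ (dihedral groups are real reflection groups, so such a structure exists), which is precisely the restriction of the map $*$ of Lemma 5.5; then the axiom on the $\Lambda$-cells is inherited. For $\lambda_{LK}$ I compute directly: with $\bar C^{\lambda_{LK}}_{i,j}=s_ce_j$ and $2c\equiv i+j$, relation $(3)$ of Definition 1.1 together with $s_c^2=1$ gives $*(s_ce_j)=e_js_c=s_ce_{2c-j}$, and since $2c-j\equiv i\ (\mathrm{mod}\ m)$ this equals $s_ce_i$. As the $s$-index is unchanged (because $2c\equiv j+i$ as well), $s_ce_i=\bar C^{\lambda_{LK}}_{j,i}$, so $*(\bar C^{\lambda_{LK}}_{i,j})=\bar C^{\lambda_{LK}}_{j,i}$.

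Axiom (C3) is the substantive part. For $\lambda\in\Lambda$ one has $\lambda_{LK}\prec\lambda$, so $J\subseteq B_{G_m}(\Upsilon)(<\lambda)$; writing $a=\pi(a)+a_0$ with $a_0\in J$, the ideal property gives $a\,\bar C^{\lambda}_{S,T}\equiv\pi(a)\,C^{\lambda}_{S,T}\ (\mathrm{mod}\ J)$, and the cellular identity for $\pi(a)$ inside $\mathbb{C}G_m$ yields (C3) with $r_a:=r_{\pi(a)}$, independent of $T$. For the minimal cell $\lambda_{LK}$ one has $B_{G_m}(\Upsilon)(<\lambda_{LK})=0$, so an exact identity is needed. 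Here I would first observe that $J=\bigoplus_j\mathbb{C}G_m\,e_j$ as a left $B_{G_m}(\Upsilon)$-module, each summand having basis $\{\bar C^{\lambda_{LK}}_{i,j}\}_i$ (left invariance under $e_q$ follows by pushing $e_q$ past group elements with $(3)$ and reducing $e_ke_{j'}$ with $(2)$ and $(5)$). Then I compute the left action of the generators on the cellular basis: using relation $(1)$ of the algebra $A_m(\Upsilon)$ one gets $r_l\cdot\bar C^{\lambda_{LK}}_{i,j}=\bar C^{\lambda_{LK}}_{i+2l,\,j}$ and $s_l\cdot\bar C^{\lambda_{LK}}_{i,j}=\bar C^{\lambda_{LK}}_{2l-i,\,j}$, while factoring $s_pe_q=s_p\cdot e_q$ and computing $e_q\cdot(s_ce_j)=s_c\,e_{2c-q}\,e_j$ via $(3)$ and then $(2)$ or $(5)$ gives $e_q\cdot\bar C^{\lambda_{LK}}_{i,j}=\tau_0\,\bar C^{\lambda_{LK}}_{q,j}$ when $i\equiv q$ and $\bar C^{\lambda_{LK}}_{q,j}$ otherwise. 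In every case the structure constants are independent of the second index $j$, which is exactly (C3) for $\lambda_{LK}$.

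I expect the only real obstacle to be this $j$-independence of the structure constants for $\lambda_{LK}$: a priori the reduction of $e_{2c-q}e_j$ through relations $(5)$/$(6)$ could introduce scalars $\mu_s$ or $\tau_j$ that vary with $j$, and the index $c=c(i,j)$ fixed by $2c\equiv i+j$ must be tracked carefully through each reduction. The feature that makes it succeed is precisely the oddness of $m$: then $i\sim j$ for all $i,j$, so all $\mu_s$ coincide (normalized to $1$) and all $\tau_i=\tau_0$, and moreover each $R(k,j)$ is a single reflection, so $e_ke_j$ collapses to one term $s_de_j$ with trivial coefficient. Combined with the invertibility of $2$ modulo $m$ (keeping the conversion $2c\equiv i+j$ well-defined throughout), this removes all latent $j$-dependence and delivers the exact multiplication identity required.
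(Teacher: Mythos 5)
Your proposal is correct and follows essentially the same route as the paper's proof: (C1) from the basis theorem (Theorem 6.1), (C2) from the computation $*\bigl(s_{\frac{i+j}{2}}e_j\bigr)=e_js_{\frac{i+j}{2}}=s_{\frac{i+j}{2}}e_i$, and (C3) from the same left-multiplication formulas $s_l\,\bar{C}^{\lambda_{LK}}_{i,j}=\bar{C}^{\lambda_{LK}}_{2l-i,j}$, $e_l\,\bar{C}^{\lambda_{LK}}_{i,j}=\bar{C}^{\lambda_{LK}}_{l,j}$ for $l\neq i$, and $e_i\,\bar{C}^{\lambda_{LK}}_{i,j}=\tau_0\,\bar{C}^{\lambda_{LK}}_{i,j}$. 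The additional points you make — the decomposition $B_{G_m}(\Upsilon)=\mathbb{C}G_m\oplus J$ with $J=\ker\pi$ to settle (C3) for the cells $\lambda\in\Lambda$ modulo $B(<\lambda)$, and the requirement that the cell datum of $\mathbb{C}G_m$ have anti-involution $w\mapsto w^{-1}$ — are details the paper leaves implicit rather than a different method, so your write-up is a fleshed-out version of the same argument.
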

\begin{pf}
Recall the definition of cellular algebras in section 2. $(C1)$
follows by Theorem 6.1. $(C2)$ is because $*( s_{\frac{i+j}{2}} e_j)
= e_j s_{\frac{i+j}{2}}  =  s_{\frac{i+j}{2}} e_i $. $(C3)$ is by
the following computation. $s_l (\bar {C} _{i,j} ^{\lambda _{LK
}})=\bar {C} _{2l-i,j} ^{\lambda _{LK }} $ for any $l,i,j$;  $e_l
(\bar {C} _{i,j} ^{\lambda _{LK }})= \bar {C} _{l,j} ^{\lambda _{LK
}} $ if $l\neq i$ ; $e_i (\bar {C} _{i,j} ^{\lambda _{LK }})=\tau _0
\bar {C} _{i,j} ^{\lambda _{LK }}. $
\end{pf}

\begin{rem}
From above proof we see the cellular representation corresponding to
$\lambda _{LK }$ is the infinitesimal LK representation of Marin.
\end{rem}

\nd{\bf  Cellular Structures When $m$ is Even } Suppose $m=2k$.
Still denote the cellular structure of $\mathbb{C} G_m $ as
$(\Lambda , M , C ,* )$. The algebra $B_{G_m }(\Upsilon)$ has a
cellular structure $(\bar {\Lambda },\bar { M}, \bar {C} ,*)$ as
follows.

\begin{itemize}
  \item $\bar{\Lambda}=\Lambda \cup \{\lambda _{LK ^0 } , \lambda
_{LK ^1 } \}$. We keep the partial order in $\Lambda$, and let $LK^i
\prec \lambda $  for any $\lambda \in \Lambda$ and any $i$.
  \item For $\lambda \in \Lambda $, $\bar{M}(\lambda )
=M(\lambda)$.  $\bar{M}(\lambda _{LK ^0 })=\{0,2,\cdots ,2k-2
\}$.$\bar{M}(\lambda _{LK ^1 })=\{1,3,\cdots ,2k-1 \}$.

  \item  For $\lambda \in \Lambda $, $S,T
\in \bar{M} (\lambda )$, let $\bar{C} ^{\lambda } _{S,T} =C^{\lambda
} _{S,T}$.

  \item $\bar {C} ^{\lambda _{LK ^0 }} _{2i,2j } =\frac{1}{2}(s_{i+j}
  +s_{i+j+k})e_{2j}$, $\bar {C} ^{\lambda _{LK ^1 }} _{2i+1,2j+1 } =\frac{1}{2}(s_{i+j+1}
  +s_{i+j+k+1})e_{2j+1}$.

  \item Let $*$ be the involution in lemma 5.5.

\end{itemize}

\begin{thm}
Above $(\bar {\Lambda },\bar { M}, \bar {C} ,*)$ defines a cellular
structure for  $B_{G_m }(\Upsilon)$ .
\end{thm}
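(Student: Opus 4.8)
The plan is to verify the three axioms (C1), (C2), (C3) of the Graham--Lehrer definition recalled in Section 2, following verbatim the pattern of the odd case (Theorem 6.3). The two new features are that the Lawrence--Krammer part of $B_{G_m}(\Upsilon)$ now splits into two minimal cells $\lambda_{LK^0}$ and $\lambda_{LK^1}$, indexed by the even, respectively the odd, hyperplanes, and that these two cells must be shown to decouple. Throughout I would exploit the identifications coming from relations $(1)$ and $(1)'$: since $s_{2j}e_{2j}=e_{2j}$ and $s_{2j+k}e_{2j}=e_{2j}$, one obtains $r_k e_{2j}=e_{2j}$, hence $s_{a+k}e_{2j}=s_a e_{2j}$ and $r_a e_{2j}=s_{2j+k+a}e_{2j}$ for every $a$ (with the analogous odd statements). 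In particular the symmetrization collapses, $\bar C^{\lambda_{LK^0}}_{2i,2j}=s_{i+j}e_{2j}$, which is the convenient form for the computations below.

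For (C1), I would invoke Theorem 6.1(2): each of the two cells has an index set of size $k$, so the family $\{\bar C^{\lambda_{LK^0}}_{2i,2j}\}\cup\{\bar C^{\lambda_{LK^1}}_{2i+1,2j+1}\}$ has $2k^2=m^2/2$ members, matching the cardinality of the Lawrence--Krammer part $\{s_ie_{2j}\}\cup\{s_ie_{2j+1}\}$ of the basis in Theorem 6.1(2). Using $\bar C^{\lambda_{LK^0}}_{2i,2j}=s_{i+j}e_{2j}$ together with $s_{a+k}e_{2j}=s_ae_{2j}$, for each fixed $j$ the elements $\{\bar C^{\lambda_{LK^0}}_{2i,2j}\}_{i=0}^{k-1}$ run through $\{s_0e_{2j},\dots,s_{k-1}e_{2j}\}$, so the change of basis to the spanning set of Theorem 6.1(2) is a permutation and the family is a basis of the even part; likewise for the odd part. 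Since the group-algebra cells are unchanged, this yields a basis of $B_{G_m}(\Upsilon)$, so $\bar C$ is injective with image an $R$-basis.

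For (C2), I would compute $*$ on the cell elements using the anti-involution of Lemma 5.5 ($s_a\mapsto s_a$, $e_b\mapsto e_b$) together with relation $(3)$, which in the dihedral case reads $e_bs_a=s_ae_{2a-b}$. This gives $*(\bar C^{\lambda_{LK^0}}_{2i,2j})=e_{2j}s_{i+j}=s_{i+j}e_{2i}=\bar C^{\lambda_{LK^0}}_{2j,2i}$, and the same for $\lambda_{LK^1}$, which is exactly the required symmetry $*(\bar C^{\lambda}_{S,T})=\bar C^{\lambda}_{T,S}$.

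Condition (C3) is the substance of the proof. Since $\lambda_{LK^0}$ and $\lambda_{LK^1}$ are minimal in $\bar\Lambda$, there are no lower-order terms and one needs exact identities for left multiplication by the generators $s_l$ and $e_l$. For $s_l$, using $s_ls_{i+j}=r_{l-i-j}$ and $r_ae_{2j}=s_{2j+k+a}e_{2j}$, I expect $s_l\bar C^{\lambda_{LK^0}}_{2i,2j}=\bar C^{\lambda_{LK^0}}_{2(l-i),2j}$. For $e_l$, relation $(3)$ turns $e_l s_{i+j}e_{2j}$ into $s_{i+j}e_{2(i+j)-l}e_{2j}$, and then the parity dichotomy of relations $(5)$ and $(6)$ takes over: when $l$ is odd the two $e$-factors have opposite parity, $R=\emptyset$, and the product vanishes, so $e_{\mathrm{odd}}$ kills the $LK^0$ cell, which is what decouples the two cells; when $l=2p$ is even, the off-diagonal case $i\ne p$ uses relation $(5)$ while the diagonal case $i=p$ uses relation $(2)$ to produce the scalar $\tau_0$. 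The main obstacle, and the real role of the symmetrized representatives, is to check that in the off-diagonal case both reflections of $R$ feed back into the single vector $\bar C^{\lambda_{LK^0}}_{2p,2j}$ and that the resulting coefficient $\mu_{i-p}+\mu_{i-p+k}$ depends only on the parities of $i-p$ and $i-p+k$, hence is independent of the right index $j=T$, as the cellular axiom demands. The computations for $\lambda_{LK^1}$ are identical with $(\mu_1,\tau_1)$ replacing $(\mu_0,\tau_0)$, and the vanishings $e_{\mathrm{even}}\bar C^{\lambda_{LK^1}}=0=e_{\mathrm{odd}}\bar C^{\lambda_{LK^0}}$ confirm that the two Lawrence--Krammer cells do not interact.
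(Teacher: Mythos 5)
Your proposal is correct and follows essentially the same route as the paper: (C1) from Theorem 6.1(2), (C2) via the anti-involution of Lemma 5.5 together with relation $(3)$, and (C3) via the same four left-multiplication identities the paper records, your coefficient $\mu_{i-p}+\mu_{i-p+k}$ agreeing with the paper's $\mu_{p+i}+\mu_{p+i+k}$ because $\mu$ depends only on parity. The only cosmetic difference is your preliminary observation that relation $(1)'$ collapses the symmetrized representative, $\frac{1}{2}(s_{i+j}+s_{i+j+k})e_{2j}=s_{i+j}e_{2j}$, a simplification the paper leaves implicit in its computations.
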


\begin{proof}
$(C1)$ follows from Theorem 6.1. $(C2)$ is certified similarly.
$(C3)$ follows from the following computation. $s_l (\bar {C}
^{\lambda _{LK ^0 }} _{2i,2j })= \bar {C} ^{\lambda _{LK ^0 }}
_{2(l-i),2j }$; $e_l (\bar {C} ^{\lambda _{LK ^0 }} _{2i,2j })=0$ if
$l$ is odd; $e_{2i} (\bar {C} ^{\lambda _{LK ^0 }} _{2i,2j })= \tau
_0 \bar {C} ^{\lambda _{LK ^0 }} _{2i,2j }$; $e_l (\bar {C}
^{\lambda _{LK ^0 }} _{2i,2j })= (\mu _{p+i} +\mu _{p+i+k}) \bar {C}
^{\lambda _{LK ^0 }} _{l,2j }$ if $l=2p$ is even and $l\neq 2i $.

$s_l (\bar {C} ^{\lambda _{LK ^1 }} _{2i+1,2j+1 })= \bar {C}
^{\lambda _{LK ^1 }} _{2(l-i-1)+1,2j+1 }$; $e_l (\bar {C} ^{\lambda
_{LK ^1 }} _{2i+1,2j+1 })=0$ if $l$ is even; $e_{2i+1} (\bar {C}
^{\lambda _{LK ^1 }} _{2i+1,2j+1 })= \tau _1 \bar {C} ^{\lambda _{LK
^1 }} _{2i+1,2j+1 };$  $e_l (\bar {C} ^{\lambda _{LK ^1 }}
_{2i+1,2j+1 })= (\mu _{p+i+1} +\mu _{p+i+1+k}) \bar {C} ^{\lambda
_{LK ^1 }} _{2p+1,2j+1 }$ if $l=2p+1$ is odd and $l\neq 2i+1$.

\end{proof}

\begin{rem}
The two representations corresponding to $\lambda _{LK ^0 }$,
$\lambda _{LK ^1 }$ are components of the infinitesimal LK
representations of Marin as in Theorem5.2.
\end{rem}

\section{ $H_3$ Type}
 The Coxeter group $G_{H_3}$ of type $H_3$ is the symmetric group of a
regular dodecahedron(or a regular icosahedron). It is generated by
$s_0$, $s_1$, $s_2$ with relations:
\begin{align*}
&a)\  s_i ^2 =1\  for\  all\  i's.        &b)&\  s_0 s_1 s_0 s_1 s_1
= s_1 s_0
s_1 s_0 s_1.\\
&c)\  s_0 s_2 =s_2 s_0 .       &d)&\  s_1 s_2 s_1 =s_2 s_1 s_2.
\end{align*}

We have: $|G_{H_3}|= 120$, $|R|= 15$.

The group $G_{H_3}$ has a nontrival center element $c=(s_2 s_0 s_1
)^5 $ which is also the longest element. Denote the other $12$
reflections arbitrarily as $s_3 ,\ \cdots ,\ s_{14}$ so $R= \{s_i
\}_{0\leq i\leq 14}$. Denote the reflection hyperplane of $s_i$ as
$H_i$, naturally set the index set of reflection hyperplanes as
$P=\{ 0,1,\cdots ,14 \}$.

In the following Figure 3, the dotted lines show the intersection of
three reflection hyperplanes with the front surface of the
dodecahedron. For $s,\ s^{'} \in R$, we say $s$ is perpendicular to
$s^{'}$ and denote $s \bot s^{'}$ if $ss^{'} =s^{'}s$. From Figure 3
we see directly that $'\bot'$ is a equivalent relation in $R$ ( the
proof of this fact is only simple but lengthy computations ).
According to it $R$ is decomposed into 5 equivalent classes
$\mathscr{R} =\{ R_1 , \cdots ,R_5 \}$, each class consists of 3
elements. Let $w_0 = s_1 s_2 s_1 s_0 s_1 s_0 s_1 $.  A typical
equivalent class is $\{ s_0 , s_2 , s_{i_0 } \}$ where $s_{i_0}= w_0
^{-1} s_0 w_0 $. Any way we suppose $R_{\alpha} =\{ i_{\alpha}
,j_{\alpha} ,k_{\alpha } \}$ for $1\leq \alpha \leq 5$ and let $R_1
=\{ s_0,s_2, s_{i_0} \}$. The conjugating action of $G_{H_3}$ on $R$
induces an action of the same group on $\mathscr{R}$, because $s_i
\perp s_j $ implies $ws_i w^{-1} \perp ws_j w^{-1} $. Since
$G_{H_3}$ acts on $R$ transitively by conjugation, the action of
$G_{H_3}$ on $\mathscr{R}$ is also transitive.

\begin{figure}[htbp]

  \centering
  \includegraphics[height=4.5cm]{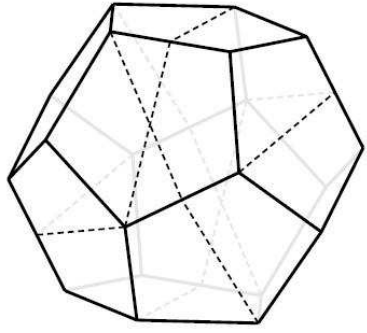}
  \caption{Regular dodecahedron}
 \end{figure}

 It isn't hard to see that,     $|R(i,j)|= 0$  if  $s_i \perp s_j
 $ and $|R(s_i ,s_j )|= 1$ otherwise.

Now consider the algebra $B_{G_{H_3}} (\Upsilon)$.  Since all
elements of $R$ lie in the same conjugacy class, so all $\tau_i $
equal and all $\mu_i$ equal. We denote them as $\tau$ and $\mu$
respectively. Suppose $\mu \neq 0$, in these cases we can set $\mu
=1$ by lemma 5.3.  We have

\begin{lem}
\begin{align*}
&(a) w_0 e_0 e_2 =e_0 e_2 ;   &(b)& w_0 s_0 w_0 ^{-1} = s_2 ;\\
&(c) w_0 ^3 =c ;    &(d)& \mbox{the group generated by}\ \{ s_0 ,
s_2
 , w_0 \}  \  \mbox{has order 24}.\\
&(e) e_{i_{\alpha}} e_{j_{\alpha}} = e_{j_{\alpha}} e_{k_{\alpha} }
= e_{k_{\alpha}} e_{i_{\alpha}}.
\end{align*}

\end{lem}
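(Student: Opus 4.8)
The plan is to split the five assertions into the purely group-theoretic ones (b), (c), (d) about $G_{H_3}$ and the algebraic ones (a), (e) about the generators $e_i$, and to reduce everything else to the single computational fact (a). First I would fix the geometric realization: take unit simple roots $\alpha_0,\alpha_1,\alpha_2$ with $\langle\alpha_0,\alpha_1\rangle=-\cos(\pi/5)=-\phi/2$, $\langle\alpha_1,\alpha_2\rangle=-1/2$, $\langle\alpha_0,\alpha_2\rangle=0$ (where $\phi=(1+\sqrt5)/2$), and write each $s_i(v)=v-2\langle v,\alpha_i\rangle\alpha_i$ as a $3\times3$ matrix. Multiplying the word $w_0=s_1s_2s_1s_0s_1s_0s_1$ out gives a single matrix whose first column is $-\alpha_2$; since $w_0(\alpha_0)=-\alpha_2$ forces $w_0s_0w_0^{-1}=s_{-\alpha_2}=s_2$, this is (b). That same matrix has trace $0$ and the sum of its principal $2\times2$ minors equal to $0$, so its characteristic polynomial is $\lambda^3+1$, and Cayley--Hamilton gives $w_0^3=-I$. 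As $c=(s_2s_0s_1)^5$ is the longest element of $H_3$ and hence equals $-I$ (equivalently the Coxeter element $s_2s_0s_1$ has exponents $1,5,9$, so its fifth power is $-I$), we obtain $w_0^3=c$, which is (c).

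For (d) I would argue structurally from (b) and (c). The three reflections $s_0,s_2,s_{i_0}$ form the $\perp$-class $R_1$, so they pairwise commute and generate $N:=\langle s_0,s_2,s_{i_0}\rangle\cong(\mathbb Z/2)^3$ of order $8$, with $s_0s_2s_{i_0}=-I=c$. By (b) and $s_{i_0}=w_0^{-1}s_0w_0$, conjugation by $w_0$ cycles $s_{i_0}\mapsto s_0\mapsto s_2\mapsto s_{i_0}$, so $w_0$ normalizes $N$; since $s_{i_0}\in\langle s_0,s_2,w_0\rangle$ we have $\langle s_0,s_2,w_0\rangle=N\langle w_0\rangle$ with $N$ normal. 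Now $w_0^3=c\in N$ while $w_0$ has order $6$ and every element of $N$ has order $\le2$, so $w_0\notin N$ and the index $[\,N\langle w_0\rangle:N\,]$ is exactly $3$; hence the order is $8\cdot3=24$.

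The hard part will be (a), $w_0e_0e_2=e_0e_2$, and I expect it to be the main obstacle. Relation (3) only yields $w_0e_0e_2=e_2e_{i_0}w_0$ by conjugation, and since $H_0\cap H_2$, $H_2\cap H_{i_0}$, $H_{i_0}\cap H_0$ are all crossing edges (the pairs being perpendicular) relation (4) merely makes the three products commute; thus relations (3),(4) alone can never absorb the trailing $w_0$. The genuine input must come from relation (5), which in $H_3$ reads $e_ae_{s(a)}=s\,e_{s(a)}=e_a s$ for the unique $s\in R(a,s(a))$ whenever $H_a\cap H_{s(a)}$ is noncrossing (i.e. $s_a,s_{s(a)}$ are non-perpendicular, their pencil being of type $I_2(3)$ or $I_2(5)$). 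The plan is to reduce $w_0e_0e_2$ by repeatedly applying relations (3), (4), (5) and (2) ($e_i^2=\tau e_i$), trading the reflection letters of $w_0$ against the adjacent $e_i$ via relation (5) whenever the relevant hyperplane pair is an $I_2(3)$- or $I_2(5)$-pencil, and collapsing the resulting $e$-word back to $e_0e_2$; the admissibility of each trade and the identity of the intermediate hyperplanes are read off from the realization above (Figure 3). This bookkeeping is finite but lengthy, and verifying that every pencil that occurs is of the asserted type, so that the word telescopes to $e_0e_2$, is the crux.

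Finally, (e) follows cleanly from (a). Applying the anti-involution $*$ of Lemma 5.5 (available since $\mu_s=\mu_{s^{-1}}$ for reflections) to (a) gives $e_2e_0w_0^{-1}=e_2e_0$, whence $e_0e_2w_0=e_0e_2$ by relation (4). Conjugating $e_0e_2$ by $w_0$ and using relation (3), $w_0e_0w_0^{-1}=e_2$ and $w_0e_2w_0^{-1}=e_{i_0}$, gives $e_2e_{i_0}=w_0(e_0e_2)w_0^{-1}=(e_0e_2)w_0^{-1}=e_0e_2$, the last two steps being (a) and its $*$-image; one further conjugation gives $e_{i_0}e_0=e_0e_2$, so $e_0e_2=e_2e_{i_0}=e_{i_0}e_0$, which is (e) for $R_1$. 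Since $G_{H_3}$ acts transitively on the five $\perp$-classes $\mathscr R$, I would then conjugate this identity by a group element carrying $R_1$ to $R_\alpha$ (again through relation (3)) to obtain $e_{i_\alpha}e_{j_\alpha}=e_{j_\alpha}e_{k_\alpha}=e_{k_\alpha}e_{i_\alpha}$ for every $\alpha$.
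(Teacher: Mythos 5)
Your handling of (b), (c), (d) and (e) is sound and runs parallel to the paper: the paper also does (b), (c) by direct computation in $G_{H_3}$, proves (d) by exhibiting the order-8 abelian subgroup $\langle s_0, s_2, c\rangle$ as normal with quotient $\{[1],[w_0],[w_0^2]\}$ (your $N\langle w_0\rangle$ argument is the same count), and derives (e) from (a) by conjugating with $w_0$ and then moving to the other classes $R_\alpha$ by transitivity. Your use of the anti-involution $*$ of Lemma 5.5 to get the right-sided identity $e_0e_2w_0^{-1}=e_0e_2$ is in fact a careful patch of a step the paper leaves implicit. The genuine gap is (a): you never prove it. You correctly isolate relation (5) as the necessary input, but what you give is only a plan --- ``trade the reflection letters of $w_0$ against the adjacent $e_i$, collapse the $e$-word back, the bookkeeping is finite but lengthy'' --- with no execution and no verification that the reduction terminates at $e_0e_2$ rather than at some other multiple or some longer $e$-word (note that each trade $s\,e_j=e_{s(j)}e_j$ \emph{lengthens} the $e$-word, so termination is not obvious). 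Since (a) is the foundation on which your own proof of (e) rests, as it is in the paper, the lemma is not established.

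The missing idea is short and makes (a) a two-line computation, not a bookkeeping exercise: introduce $e_1$ and evaluate $e_1e_0e_2$ in two ways. In the $I_2(5)$ pencil through $H_0\cap H_1$, relation (5) gives $e_1e_0=s_0s_1s_0s_1s_0\,e_0$, hence $(e_1e_0)e_2=s_0s_1s_0s_1\,e_0e_2$ after absorbing $s_0$ into $e_0$ by relation (1). On the other hand, since $H_0\pitchfork H_2$, relation (4) and then relation (5) in the $I_2(3)$ pencil through $H_1\cap H_2$ give $e_1(e_0e_2)=e_1e_2e_0=s_1s_2s_1\,e_2e_0=s_1s_2s_1\,e_0e_2$. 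Equating and multiplying on the left by $(s_1s_2s_1)^{-1}=s_1s_2s_1$ yields $s_1s_2s_1s_0s_1s_0s_1\,e_0e_2=e_0e_2$, which is exactly $w_0e_0e_2=e_0e_2$. This is the paper's proof of (a); everything else in your proposal can then be kept as written.
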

\begin{proof}
First $(a)$ follows from identities

 $(e_1 e_0 ) e_2 = s_0 s_1 s_0
s_1 s_0 e_0 e_2 =s_0 s_1 s_0 s_1
 e_0 e_2  $ and
$(e_1 e_0 )e_2 = (e_1 e_2 )e_0 =s_1 s_2 s_1 e_2 e_0
 $.

 $(b)$ and $(c)$ follow by direct computations. Denote the group in $(d)$ as $H$.
 Computation shows
 $w_0 s_2 w_0 ^{-1} = c s_0 s_2 $ which implies the order $8$
 abelian subgroup $H^{'} $ generated by $\{ s_0 ,\ s_2 ,\ c \}$ is
 normal in $H$, and the quotient group $H/H^{'}$ consists of $\{ [1] ,\ [w_0 ],\ [w_0 ^2 ]
 \}$. So $(d)$ follows. For $(e)$ we first prove the special case of $\alpha =1$ then the other cases follow by
 conjugating action of $G_{H_3}$. Now the first "=" in $(e)$ is certified by the
 following identity and the second one can be proved similarly. The
 first $'='$ below is by $(a)$.

 $e_0 e_2 = w_0 e_0 e_2 w_0 ^{-1} = w_0 e_0 w_0 ^{-1} w_0 e_2 w_0 ^{-1} = e_2 e_{i_0}.$

\end{proof}

\begin{rem} For $0\leq i\leq 14 $, we define

$G^i = \{ w\in G_{H_3}\  |\  ws_i w^{-1} =e_i \}$  and $H^i =\{ w\in
G_{H_3}\  |\  we_i =e_i \}$.

 For $1\leq \alpha
\leq 5$ define $G_{\alpha}=\{ w\in G_{H_3}\ |\ w (R_{\alpha})
=R_{\alpha} \}$ and
 $H_{\alpha} =\{ w\in G_{H_3}\ |\ we_{i_{\alpha} } e_{j_{\alpha}} =e_{i_{\alpha} } e_{j_{\alpha}}
 \}$.

 There is  $H^i \subset G^i $ and  $H_{\alpha} \subset G_{\alpha}$.
 Since $G_{H_3}$ acts on $R$ transitively and $|R|=15$,  we see if $s_j ,\ s_k$ are the other two
 reflections commuting with $s_i $, then $G^i $ is the order 8 group
 generated by $\{ s_i ,\ s_j ,\ s_k \}$. By $(2)$ of Theorem 7.1 below we
 know $H^i $ is the order 2 group generated by $s_i$.

 Since the action of $G_{H_3}$ on $\mathscr{R}$ is transitive
 we have $|G_{\alpha}|=24$. By $(d)$ of above lemma we have $H_{\alpha} =G_{\alpha
 }$,and that $G_1 $ is generated by $\{ s_0 ,\ s_2 \ , \ w_0 \}$.

\end{rem}

\begin{lem}
$\dim B_{G_{H_3}} (\Upsilon) \leq 1045$.
\end{lem}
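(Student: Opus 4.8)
The plan is to bound the dimension by producing a spanning set with at most $1045$ elements, starting from the spanning set of Lemma 5.2 and then collapsing it drastically using the special features of the $H_3$ arrangement recorded in Lemma 7.1.

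First I would recall from Lemma 5.2 that $B_{G_{H_3}}(\Upsilon)$ is spanned by the words $w\,e_{i_1}\cdots e_{i_M}$ with $w\in G_{H_3}$, the $e_{i_u}$ pairwise commuting, and the indices distinct. The key observation is that $e_i$ and $e_j$ commute only when $s_i\perp s_j$: if $s_i\not\perp s_j$ then $|R(i,j)|=1$, so relation $(5)$ forces $e_ie_j=\mu_s\,s\,e_j$ and $e_je_i=\mu_{s'}s'e_i$, which are distinct. Since $\perp$ has exactly the five classes $R_1,\dots,R_5$ of three mutually perpendicular reflections, every spanning word must have all of its $e$'s drawn from a single class $R_\alpha=\{i_\alpha,j_\alpha,k_\alpha\}$. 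Within such a class I would use Lemma 7.1$(e)$, namely $e_{i_\alpha}e_{j_\alpha}=e_{j_\alpha}e_{k_\alpha}=e_{k_\alpha}e_{i_\alpha}$, together with relation $(4)$, to see that all six length-two products coincide with one element $f_\alpha:=e_{i_\alpha}e_{j_\alpha}$, and that the length-three product collapses to a scalar multiple of $f_\alpha$ (for instance $e_{i_\alpha}e_{j_\alpha}e_{k_\alpha}=e_{j_\alpha}e_{k_\alpha}e_{k_\alpha}=\tau_{k_\alpha}f_\alpha$ by relation $(2)$). Hence, up to scalars, every spanning word is one of $w$, $w\,e_i$, or $w\,f_\alpha$.

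It then remains to count these three families. The set $\{w\}_{w\in G_{H_3}}$ has $120$ elements. For a fixed $i$, relation $(1)$ gives $w\,e_i=(ws_i)\,e_i$, so $w\,e_i$ depends only on the coset $w\langle s_i\rangle$; as $|\langle s_i\rangle|=2$ there are at most $|G_{H_3}|/2=60$ distinct elements $w\,e_i$, hence at most $15\cdot 60=900$ in total. For a fixed class I would show that every $w\in G_\alpha$ fixes $f_\alpha$ on the left: each of the three class reflections does so by relations $(3)$ and $(1)$ (e.g. $s_{k_\alpha}f_\alpha=s_{k_\alpha}e_{j_\alpha}e_{k_\alpha}=e_{j_\alpha}s_{k_\alpha}e_{k_\alpha}=f_\alpha$, after rewriting $f_\alpha=e_{j_\alpha}e_{k_\alpha}$ via Lemma 7.1$(e)$), and the extra generator $w_0$ of $G_1$ does so by Lemma 7.1$(a)$; transitivity of the conjugation action of $G_{H_3}$ on $\mathscr{R}$ then extends this to every $\alpha$. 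Since $|G_\alpha|=24$, there are at most $|G_{H_3}|/24=5$ distinct $w\,f_\alpha$ per class, hence at most $5\cdot 5=25$ in total. Adding up, $\dim B_{G_{H_3}}(\Upsilon)\le 120+900+25=1045$.

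The routine parts are the intraclass collapse and the two coset counts; the one step needing genuine care is verifying that the entire order-$24$ group $G_\alpha$ stabilizes $f_\alpha$ on the left, since this is what upgrades the crude factor $120$ to the sharp factor $5$ and is precisely where Lemma 7.1$(a),(d),(e)$ and the commutation relations $(1),(3)$ must be combined. Note that for the upper bound one needs only the containments $\langle s_i\rangle\subseteq H^i$ and $G_\alpha\subseteq H_\alpha$, not the reverse inclusions; any overlaps among the three families can only decrease the count, so the inequality $\le 1045$ survives regardless.
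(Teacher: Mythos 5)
Your proposal is correct and follows essentially the same route as the paper's own proof: the spanning set of Lemma 5.2, collapse of triple products via Lemma 7.1(e), and the count $120+900+25$ using cosets of $\langle s_i\rangle$ and of the order-$24$ stabilizers $G_\alpha$ (where the paper simply cites Remark 7.1 for the fact that all of $G_\alpha$ fixes $e_{i_\alpha}e_{j_\alpha}$, you verify it on generators). The one blemish is your parenthetical claim that $e_ie_j=\mu_s s e_j$ and $e_je_i=\mu_s s e_i$ are \emph{distinct} when $s_i\not\perp s_j$ --- nothing proved so far rules out a degenerate collapse --- but this is harmless, since any word containing a non-perpendicular pair is absorbed by relation $(5)$ into the family $\{w e_i\}$ anyway, so the bound stands exactly as you note at the end.
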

\begin{proof}
Recall the set spanning $B_{G_{H_3}}(\Upsilon)$ in lemma 5.2.
Consider an element $e= e_i e_j e_k$, such that $i,j,k$ are
different and every two elements in $\{ i,j,k\}$ satisfy condition
$(4)$ of Definition 1.1. Notice any two of $\{ e_i ,e_j ,e_k \}$
commutes. By above discussion $\{ s_i,s_j,s_k \}$ is some class
$R_{\alpha} $. Now we have $e_{i_{\alpha }} e_{j_{\alpha}}
e_{k_{\alpha}} =e_{j_{\alpha }} e_{k_{\alpha}} e_{k_{\alpha}}= \tau
e_{j_{\alpha }} e_{k_{\alpha}} $, which is by $(e)$ of Lemma 7.1.
  So we have proved that $B_{G_{H_3}}(\Upsilon )$ is spanned by the set

$$\Lambda = G_{H_3} \coprod \{ w e_i \} \coprod \{ w e_i e_j | s_i \perp s_j
\}.$$

The relation $s_i e_i = e_i $ implies $|\{ w e_i \}|\leq
\frac{120}{2}\times 15 =900$. By above discussion and $(e)$ of lemma
7.1, we know there are at most 5 kinds of $e_i e_j $ appearing in
$\{ w e_i e_j | s_i \perp s_j \}$. For every such $e_i e_j$ there is
a group $H_{i,j}$ of order 24 such that $w e_i e_j = e_i e_j $ for
any $w\in H_{i,j}$. So $|\{ w e_i e_j | s_i \perp s_j \}|\leq
\frac{120}{24} \times 5 =25$ and the lemma follows.

\end{proof}
\begin{rem}
The set $\Lambda$ can be presented explicitly as follows. For any
$i$ let $\{ w^i _j \} _{1\leq j\leq  60}$ be a set of
representatives of left cosets of the group $<1,\ s_i >$.  Let $\{
w^{\alpha} _{\beta} \}_{1\leq \beta \leq 5}$ be a set of
representatives of the left cosets of $H_{\alpha } $ (see Remark 7.1
) in $G_{H_3}$. Then $$\Lambda = G_{H_3} \coprod \{ w^i _j e_i
\}_{0\leq i\leq 14; 1\leq j\leq 60} \coprod \{ w^{\alpha} _{\beta}
e_{i_{\alpha}}e_{ j_{\alpha}}\}_{1\leq \alpha \leq 5; 1\leq \beta
\leq 5}.$$
\end{rem}

\nd{\bf Some Irreducible Representations. }

There are four 15 dimensional irreducible representations and one 5
dimensional irreducible representations of $B_{G_{H_3}}(\Upsilon )$
as follows.

The conjugating action of  $G_{H_3}$ on $R$ is transitive. Since
every element of the subgroup $G_0 =<s_0 ,\ s_2 , \ c >$ commutes
with $s_0 $ and $|G_0 |=8$, so $G_0 $ is the stablizing group of
this action at $s_0 $.  $G_0 $ has the following four one
dimensional representations $\{ \sigma _i \}_{0\leq i\leq 3}$ that
sending $s_0 $ to 1.
\begin{align*}
&(1) \sigma _0 (s_0 ,\  s_2 ,\ c)=(1,\ 1,\ 1);   &(2)& \sigma _1
(s_0 ,\
s_2 ,\ c)=(1,\ -1,\ 1);\\
&(3) \sigma _2 (s_0 ,\ s_2 ,\ c)= (1,\ 1,\ -1);  &(4)& \sigma _3
(s_0 ,\ s_2 ,\ c)= (1,\ -1,\ -1).
\end{align*}

For every $0\leq i\leq 3$, we have a left representation of
$G_{H_3}$: $Ind ^{G_{H_3}} _{G_0} (\sigma _i )$ .  They are all 15
dimensional representations whose representation spaces can be
identified with a space $V$ spanned by a basis $\{ v_i \} _{0\leq
i\leq 14}$ in bijection with the set of left cosets $\{ wG_0 \}
_{w\in G_{H_3}}$. The bijection $\phi$ from the second set to the
first one is defined by: if $ws_0 w^{-1} =s_i $ then $\phi (wG_0) =
v_i $.

Now for every $0\leq \alpha \leq 3$, we can extend every $Ind
^{G_{H_3}} _{G_0} (\sigma _{\alpha} )$ to a representation $\rho
_{\alpha}$ of $B_{G_{H_3}}(\Upsilon )$ as follows.
\begin{align*}
&(1) \rho _{\alpha}(e_i ) (v_i ) =\tau v_i . \ \ \ \ \ \ \ \ \ (2)
\rho _{\alpha}(e_i ) (v_j ) = 0 \ \mbox{ if}\   i\neq j \ \mbox{and}
\
s_i \perp s_j .\\
&(3) \rho _{\alpha}(e_i ) (v_j ) = \sigma _{\alpha} (s_k ) (v_j )\
\mbox{if}\ i\neq j ,\  and \  s_i \ \mbox{ isn't perpendicular to} \
s_j \ \mbox{
such that}\ s_k \ \mbox{is the}\\
 & \mbox{ unique reflection satisfying }\  s_k s_i s_k =s_j .
\end{align*}
By definition the operator $ \rho _{\alpha}(e_i )$ is a projector to
the line $\mathbb{C} v_i \subset V$ for all $\alpha$. For every
$0\leq \alpha \leq 3 $, define a $15\times 15$ matrix $M^{\alpha } =
( m^{\alpha} _{i,j} ) $ by the identities $\rho _{\alpha}(e_i ) (v_j
)=m^{\alpha} _{i,j} v_i $. All entries of $M^{\alpha } $ belong to
$\{ \pm 1,\ \tau , \ 0 \}$. By definition the diagonal elements of
all $M^{\alpha }$'s are all $\tau$, and non-diagonal elements are
all constants. So $\det M^{\alpha } $ is a nonzero polynomial of
$\tau$ for all $\alpha$.
\begin{prop}

(1) Above definition of $\rho _{\alpha}(e_i )$'s extends $Ind
^{G_{H_3}} _{G_0} (\sigma _{\alpha} )$ to a representation $\rho
_{\alpha}$ of $B_{G_{H_3}}(\Upsilon )$. (2) The representation $\rho
_{\alpha}$ is irreducible if and only if $\det M^{\alpha } \neq 0$.
(3) $\rho _{\alpha}\ncong \rho _{\beta}$ if $\alpha \neq \beta$.
\end{prop}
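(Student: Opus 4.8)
The plan is to check that the operators $\rho_\alpha(w)=\mathrm{Ind}^{G_{H_3}}_{G_0}(\sigma_\alpha)(w)$ and the $\rho_\alpha(e_i)$ satisfy every relation of Definition 1.1, working on the basis $\{v_l\}$. Relation $(0)$ is immediate since $\mathrm{Ind}(\sigma_\alpha)$ is a homomorphism, and because each $\rho_\alpha(e_i)$ is $\tau$ times a projector onto $\mathbb{C}v_i$, relation $(2)$ ($e_i^2=\tau e_i$) holds and every product of $e$'s has rank at most one. When $s_i\perp s_j$ one has $\rho_\alpha(e_i)(v_j)=0$, so $\rho_\alpha(e_i)\rho_\alpha(e_j)=\rho_\alpha(e_j)\rho_\alpha(e_i)=0$, which disposes of $(4)$ and $(6)$ at once. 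A helpful anchor is that for $\alpha=0$ we have $\sigma_0\equiv 1$, so $\rho_0$ is exactly the generalized Lawrence--Krammer representation $(\iota,p_i)$ of Theorem 5.2 (here $\alpha_{i,j}=|R(i,j)|\in\{0,1\}$ and $m_i=\tau$); hence for $\rho_0$ all relations hold for free, and for general $\alpha$ only the sign-sensitive relations remain.

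The structural step is relation $(3)$, $\rho_\alpha(w)\rho_\alpha(e_i)\rho_\alpha(w)^{-1}=\rho_\alpha(e_{w(i)})$. I would prove it by noting that the defining rule ``$\rho_\alpha(e_i)$ acts on $\mathbb{C}v_j$ as $\rho_\alpha(s_k)$, where $s_k$ is the unique reflection with $s_ks_is_k=s_j$'' (uniqueness because $|R(i,j)|=1$ for non-perpendicular $s_i,s_j$ in $H_3$, and Lemma 5.4 shows $s_k$ fixes $H_i\cap H_j$) is manifestly equivariant: conjugation carries the connecting reflection $s_k$ to $ws_kw^{-1}$, the connecting reflection of the pair $(w(i),w(j))$, while the $\pm1$ scalars coming from $\mathrm{Ind}(\sigma_\alpha)$ cancel in pairs. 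Once $(3)$ is in hand, relation $(5)$ follows, since its left equality $e_ie_j=s_ke_j$ is built into the definition and its right equality is obtained by conjugating by $s_k$; likewise relation $(1)$ reduces to $\rho_\alpha(s_i)v_i=v_i$, which holds because the coset computation produces $\sigma_\alpha(s_0)=1$. The main obstacle is relation $(1)'$: I would first identify, using the geometry of the $H_3$ arrangement, exactly which $w$ fix $H_i$ with $H_i\cap V_w$ noncrossing, and then verify $\rho_\alpha(w)v_i=v_i$, i.e. that the relevant conjugate of $w$ into $G_0$ lies in $\ker\sigma_\alpha$; the bookkeeping of these signs, governed by Lemma 7.1, Remark 7.1 and $\sigma_\alpha(s_0)=1$, is the delicate point.

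\textbf{Part (2).} I would use that $\rho_\alpha(e_i)$ sends $v=\sum_l c_lv_l$ to $\big(\sum_l m^\alpha_{i,l}c_l\big)v_i$, so that the $i$-th coordinate of $M^\alpha c$ measures the $e_i$-image of $v$. If $\det M^\alpha\neq 0$, then for any nonzero submodule $W$ and $0\neq v\in W$ we have $M^\alpha c\neq 0$, whence some $\rho_\alpha(e_i)(v)$ is a nonzero multiple of $v_i$, putting $v_i\in W$; since $\mathrm{Ind}(\sigma_\alpha)$ permutes the lines $\mathbb{C}v_l$ transitively, applying $G_{H_3}$ gives every $v_l\in W$, so $W=V$ and $\rho_\alpha$ is irreducible. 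Conversely, if $\det M^\alpha=0$ I would choose $0\neq v$ with $M^\alpha c=0$; then $\rho_\alpha(e_i)(v)=0$ for all $i$, and since $\bigcap_i\ker\rho_\alpha(e_i)$ is $G$-stable (by relation $(3)$, $\rho_\alpha(e_i)\rho_\alpha(w)=\rho_\alpha(w)\rho_\alpha(e_{w^{-1}(i)})$), the submodule $W=\mathbb{C}G_{H_3}\cdot v$ is annihilated by every $e_i$; it is proper because $\rho_\alpha(e_i)(v_i)=\tau v_i\neq 0$, so $\rho_\alpha$ is reducible.

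\textbf{Part (3).} Isomorphic modules agree on any fixed element of the algebra, so I would separate the four $\rho_\alpha$ by two scalar invariants. First, the central element $c\in G_{H_3}$ acts on all of $\mathrm{Ind}(\sigma_\alpha)$ by the scalar $\sigma_\alpha(c)$. Second, using $\rho_\alpha(s_2)v_0=\sigma_\alpha(s_2)v_0$, the operator $\rho_\alpha(s_2e_0)$ is the rank-one map $\sum_l u_lv_l\mapsto \sigma_\alpha(s_2)\big(\sum_l m^\alpha_{0,l}u_l\big)v_0$, whose trace is $\sigma_\alpha(s_2)\,m^\alpha_{0,0}=\sigma_\alpha(s_2)\tau$. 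Thus the pair $(\sigma_\alpha(c),\sigma_\alpha(s_2))$ is recovered from the $c$-eigenvalue and the trace of $s_2e_0$; since these pairs take the four distinct values $(\pm1,\pm1)$ as $\alpha$ ranges over $0,1,2,3$, the two invariants jointly distinguish the representations and $\rho_\alpha\ncong\rho_\beta$ for $\alpha\neq\beta$.
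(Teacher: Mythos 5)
Your part (2) is essentially the paper's own proof (a nonzero $\rho_\alpha(e_i)$-image of any nonzero vector lands in $\mathbb{C}v_i$, transitivity of conjugation makes $v_i$ a generator; conversely $\bigcap_i\ker\rho_\alpha(e_i)$ is a nonzero submodule), and your outline for part (1) — equivariance giving relation $(3)$, relation $(5)$ being built into the definition plus conjugation by $s_k$, vanishing products for perpendicular pairs, the $\alpha=0$ anchor via Theorem 5.2 — is sound. But part (1) is left with an explicit hole at relation $(1)'$, which you defer as "the delicate point." In fact, for $H_3$ relation $(1)'$ is vacuous, and executing the identification you propose would show it: $w(H_i)=H_i$ forces $ws_iw^{-1}=s_i$, so $w$ lies in the order-$8$ centralizer $G^i=\langle s_i,s_j,s_k\rangle$ of Remark 7.1 (with $s_j,s_k$ the two reflections perpendicular to $s_i$), and for its eight elements $H_i\cap V_w$ is either $H_i$ itself (for $w\in\{1,s_i\}$, not codimension $2$), or $H_i\cap H_j$ resp. $H_i\cap H_k$ (for $w\in\{s_j,s_k,s_is_j,s_is_k\}$), or $\{0\}$ (for $w\in\{s_js_k,c\}$, codimension $3$). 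The middle case is a \emph{crossing} edge: the pointwise stabilizer of $H_i\cap H_j$ is a rank-$2$ parabolic containing two commuting reflections, hence of type $A_1\times A_1$ (types $A_2$ and $I_2(5)$ contain no commuting reflections), so exactly the two hyperplanes $H_i,H_j$ contain that line. Thus no pair $(w,i)$ satisfies the hypothesis of $(1)'$; there is no sign bookkeeping to do, but as written your proof of (1) is incomplete precisely where you stopped.

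The genuine failure is in part (3). Your second invariant, $\mathrm{tr}\,\rho_\alpha(s_2e_0)=\sigma_\alpha(s_2)\,\tau$, recovers $\sigma_\alpha(s_2)$ only when $\tau\neq0$, and nothing in the proposition or in Section 7 excludes $\tau=0$ (only $\mu\neq0$ is assumed there; Theorem 7.1(2) is asserted for all $\tau$, and $\det M^\alpha$ need not vanish at $\tau=0$ — e.g.\ $\det M^4=(\tau-1)^4(\tau+4)$ equals $4$ there — so pairwise non-isomorphism is needed at $\tau=0$ as well). At $\tau=0$ your two invariants take identical values on $\rho_0$ and $\rho_1$ (and on $\rho_2$ and $\rho_3$), so they cannot separate them. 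The paper's argument is immune to this degeneration: an isomorphism $\psi$ satisfies $\psi\,\rho_\alpha(e_0)=\rho_\beta(e_0)\,\psi$, hence carries $\mathrm{Im}\,\rho_\alpha(e_0)=\mathbb{C}v_0$ to $\mathrm{Im}\,\rho_\beta(e_0)=\mathbb{C}v_0$ (these images equal $\mathbb{C}v_0$ for \emph{every} $\tau$ because of the $\pm1$ off-diagonal entries of $M^\alpha$), so $\psi(v_0)=\lambda_0v_0$ with $\lambda_0\neq0$; comparing $\psi(\rho_\alpha(w)v_0)=\sigma_\alpha(w)\lambda_0v_0$ with $\rho_\beta(w)\psi(v_0)=\lambda_0\sigma_\beta(w)v_0$ for $w\in G_0$ gives $\sigma_\alpha=\sigma_\beta$ on $G_0$ with no factor of $\tau$. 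Replacing your trace invariant by this eigenvalue argument (or adding the hypothesis $\tau\neq0$) closes the gap; the same remark patches the small point in part (2), where properness of $\bigcap_i\ker\rho_\alpha(e_i)$ should be justified by $\rho_\alpha(e_i)\neq0$ (again the $\pm1$ entries) rather than by $\tau v_i\neq0$.
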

\begin{proof}
  Direct computation shows $(1)$. For $(2)$, first we observe $v_i$
  is a generator for any $i$ since the conjugating action of $G_{H_3}$ on $R$
  is transitive. Suppose $ v\in V$ is a nonzero vector. If $\det M^{\alpha }\neq
  0$ then there is some $i$ such that $0\neq \rho _{\alpha} (e_i )(v) \in \mathbb{C} v_i$ by definition of $M^{\alpha }$. So $v$ is a generator and  $\rho
  _{\alpha}$ is irreducible. If  $\det M^{\alpha }= 0$ then the
  space $\cap ^{14} _{i=0} Ker \rho _{\alpha} (e_i ) $ is
  nonzero. It isn't hard to see this subspace is a submodule, thus  $\rho
  _{\alpha}$ is reducible.

  For $(3)$, suppose $\psi : V\rightarrow V$ is an $B_{G_{H_3}}(\Upsilon)$
  isomorphism from $\rho _{\alpha}$ to  $ \rho
  _{\beta}$. By $\psi ( \rho _{\alpha}(e_i )(v) )= \rho _{\beta }(e_i ) (\psi (v))
  $, so $\psi (Im  \rho _{\alpha}(e_i ) ) = Im \rho _{\beta }(e_i )
  $, which implies $\psi (v_i ) = \lambda _i v_i $ for some $ \lambda _i \neq
  0$. Now for $w\in G_0 $, on one hand we have

  $\psi ( \rho _{\alpha}(w )(v_0 ))= \psi (\sigma _{\alpha } (w) v_0)
  = \sigma _{\alpha} (w) \lambda _0 v_0 $,on the other hand

  $\psi ( \rho _{\alpha}(w )(v_0 )) =
\rho _{\beta }(w )(\psi (v_0 ))= \lambda _0 \sigma _{\beta } (w) v_0
. $ So we have $\sigma _{\beta } (w) = \sigma _{\alpha} (w)$ for any
$w\in G_0$, which implies $\alpha =\beta $.

\end{proof}

There is annother irreducible representation related to the action
of $G_{H_3 }$ on $\mathscr{R}$ defined as follows.

Let $U= \mathbb{C} <u_1 ,\cdots ,u_5 >$ be a 5 dimensional vector
space. Define a representation  of $G_{H_3 }$ on $U$ as: $ w(u_i )
=u_j $ if $w (R_i )=R_j $, for $w\in G_{H_3 }$ and $1\leq i\leq 5$.

For $0\leq i\leq 14$, define $[i]\in \{ 1,\cdots ,5 \}$ by the
relation $s_i \in R_{[i]}$. For $0\leq i\leq 14$, $1\leq p\leq 5$,
we set
\[ e_i (v_p)= \left\{
\begin{array}{ll}
\tau v_p , \ &if\ s_i \in R_p ; \\
v_{[i]} ,\ &if \ s_i  \notin  R_p .
\end{array}
\right.
\]

\begin{lem}
Above action of $G_{H_3 }$ and $e_i $'s on $U$ extends to a
representation  $\rho _4$ of $B_{G_{H_3}}(\Upsilon)$. This
representation is irreducible if and only if $(\tau -1)^4 (\tau
+4)\neq 0$.
\end{lem}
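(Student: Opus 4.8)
The plan is to prove the two assertions in turn: first that the prescribed operators respect every relation of Definition 1.1, so that $\rho_4$ is a genuine representation, and then to reduce irreducibility to the non-vanishing of one $5\times 5$ determinant. The structural observation I would record at the outset is that $e_i$ depends only on the class $[i]$: the defining formula gives $e_i(u_p)=\tau u_{[i]}$ for $p=[i]$ and $e_i(u_p)=u_{[i]}$ otherwise, so there are really only five distinct operators $e^{(\alpha)}$ ($1\le\alpha\le 5$), each of rank one with image $\mathbb{C}u_\alpha$. Writing $f_\alpha$ for the covector with $f_\alpha(u_\beta)=\tau$ if $\beta=\alpha$ and $f_\alpha(u_\beta)=1$ otherwise, we have $e^{(\alpha)}(v)=f_\alpha(v)\,u_\alpha$, which is the form I would use throughout.

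For the first assertion I would verify the relations of Definition 1.1 one by one. Relation $(0)$ is just linearity of the $G_{H_3}$-action, and $(2)$ is the immediate check $e_i^2=\tau e_i$. For $(3)$, if $w(H_j)=H_i$ then conjugation by $w$ carries $R_{[j]}$ to $R_{[i]}$, so $w(u_{[j]})=u_{[i]}$, and comparing $we_j$ with $e_iw$ on each $u_p$ gives equality. Relation $(1)$ holds because the three mutually perpendicular reflections of the class $R_{[i]}$ commute, so $s_i$ fixes $R_{[i]}$ and hence $u_{[i]}$. Since $\perp$ is exactly the class relation and, by the stated count $|R(i,j)|=0\iff s_i\perp s_j$, two planes give a crossing ($2$-fold) edge precisely when $s_i\perp s_j$, relation $(4)$ concerns only same-class pairs, for which $e_i=e_j$ and commutativity is trivial; relation $(5)$ concerns distinct-class pairs $[i]\ne[j]$, where (with $\mu=1$) the unique $s\in R(i,j)$ conjugates $R_{[j]}$ to $R_{[i]}$, so $s(u_{[j]})=u_{[i]}$, making both $se_j$ and $e_is$ agree with $e_ie_j$. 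Relation $(6)$ is vacuous, since $R(i,j)=\emptyset$ forces $s_i\perp s_j$, i.e. a crossing edge.

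The one relation requiring genuine geometry is $(1)'$, and I expect this to be the main obstacle. Here I would use that the stabilizer of $H_i$ is the order-$8$ group $G^i=\langle s_i,s_j,s_k\rangle$ generated by the three reflections of the class of $s_i$ (Remark 7.1). Enumerating its eight elements in coordinates where these reflections are $\mathrm{diag}(-1,1,1)$, $\mathrm{diag}(1,-1,1)$, $\mathrm{diag}(1,1,-1)$, one checks that $H_i\cap V_w$ has codimension $1$ (for $w\in\{1,s_i\}$), codimension $3$ (for $w\in\{s_js_k,-I\}$), or is a $2$-fold, hence crossing, axis (for the remaining four elements). It is never a noncrossing codimension-$2$ edge, so relation $(1)'$ is vacuous for $B_{G_{H_3}}(\Upsilon)$, completing the verification that $\rho_4$ is well defined.

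For the irreducibility criterion I would introduce $M=(m_{\alpha\beta})$ with $m_{\alpha\beta}=f_\alpha(u_\beta)$, that is $M=(\tau-1)I_5+J$ with $J$ the all-ones matrix; its eigenvalues are $\tau+4$ (once) and $\tau-1$ (four times), so $\det M=(\tau-1)^4(\tau+4)$. If $\det M\ne0$, then for any nonzero $v$ some $f_\alpha(v)\ne0$, whence $e^{(\alpha)}(v)$ is a nonzero multiple of $u_\alpha$ lying in the submodule generated by $v$; transitivity of the $G_{H_3}$-action on the five classes then produces every $u_\beta$, so that submodule is all of $U$ and $\rho_4$ is irreducible. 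Conversely, if $\det M=0$ then $K=\bigcap_{\alpha}\ker e^{(\alpha)}=\ker M$ is nonzero and proper; relation $(3)$ gives $e_i(wv)=w\,e_{w^{-1}(i)}(v)$, so $K$ is $G_{H_3}$-stable, and it is trivially annihilated by every $e^{(\alpha)}$, hence is a proper nonzero submodule and $\rho_4$ is reducible. Therefore $\rho_4$ is irreducible if and only if $(\tau-1)^4(\tau+4)\ne0$, as claimed.
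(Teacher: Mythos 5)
Your proposal is correct and is essentially the paper's own argument: the paper likewise reduces irreducibility to the nonvanishing of the $5\times 5$ matrix $M^4$ with diagonal entries $\tau$ and off-diagonal entries $1$ (so $\det M^4=(\tau-1)^4(\tau+4)$), proving both directions by exactly your generator/common-kernel argument (the one it already used for Proposition 7.1), while the first claim is dismissed there as ``direct computations.'' Your relation-by-relation verification --- in particular the observations that crossing edges of $H_3$ are precisely the perpendicular pairs, so that relations $(1)^{\prime}$ and $(6)$ are vacuous --- is exactly the computation the paper leaves implicit, and it is sound.
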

\begin{proof}
The first claim can be proved by direct computations. For the second
one we first observe $\rho _4 (e_i )$ is a projector to $\mathbb{C}
u_{[i]}$ and $\rho _4 (e_i ) = \rho _4 (e_j )$ if $i,\ j$ lie in the
same equivalent class. So for every $1\leq p\leq 5$ we have a well
defined projector $J_p $(onto $\mathbb{C} u_p$) by setting $J_p =
\rho _4 (e_i )$ for any $i\in R_p$. Define a $5\times 5$ matrix $M^4
= (m^4 _{p,q})$ by setting $J_p (u_q ) = m^4 _{p,q} u_p $. This
matrix is clear: diagonal entries are all $\tau$ and non-diagonal
entries are all $1$. So $\det M^4 =(\tau -1)^4 (\tau +4)$.
 An argument similar to Proposition 7.1 shows $\rho _4$ is
 irreducible if and only if $\det  M^4 \neq 0$.

\end{proof}

\begin{thm}
(1)  If  $ \Pi ^4 _{p=0}\det M^p \neq 0$, $B_{G_{H_3}}(\Upsilon )$
is a 1045 dimensional semisimple algebra and have $\Lambda $ (remark
7.2 ) as a basis. Notice  $ \Pi ^4 _{p=0}\det M^p $ is a polynomial
of $\tau $.

(2) For all $\tau $, $B_{G_{H_3}}(\Upsilon )$ is a 1045 dimensional
algebra having $\Lambda $ as a basis.
\end{thm}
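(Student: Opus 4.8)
The plan is to prove the two parts in sequence: first pin down the dimension on the generic (semisimple) locus using the explicit irreducible modules already constructed, and then propagate this to every parameter by a flatness argument over a one‑variable polynomial ring, with Lemma 7.2 supplying the essential uniform upper bound.

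For part (1) I would argue as follows. Assume $\prod_{p=0}^{4}\det M^{p}\neq 0$. Then Proposition 7.1(2) and Lemma 7.3 make $\rho_{0},\dots,\rho_{4}$ all irreducible, and they are pairwise non-isomorphic: $\rho_{4}$ has dimension $5$ while $\rho_{0},\dots,\rho_{3}$ have dimension $15$, and Proposition 7.1(3) separates the latter four. In each $\rho_{p}$ some $e_{i}$ acts as a nonzero projector, so none factors through the surjection $\pi\colon B_{G_{H_3}}(\Upsilon)\to\mathbb{C}G_{H_3}$ of Theorem 5.1; hence they are distinct from every irreducible of $\mathbb{C}G_{H_3}$ pulled back along $\pi$, in which all $e_{i}$ act as $0$. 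Taking all irreducibles of $\mathbb{C}G_{H_3}$ together with $\rho_{0},\dots,\rho_{4}$ gives a family of pairwise non-isomorphic irreducible modules, and by the Jacobson density theorem over $\mathbb{C}$ the algebra surjects onto the product of their endomorphism algebras, of total dimension $\dim_{\mathbb{C}}\mathbb{C}G_{H_3}+\sum_{p=0}^{4}(\dim\rho_{p})^{2}=120+(4\cdot 15^{2}+5^{2})=1045$. Thus $\dim B_{G_{H_3}}(\Upsilon)\ge 1045$, while Lemma 7.2 gives the reverse inequality; so the dimension is exactly $1045$, the surjection is an isomorphism onto a product of matrix algebras, the algebra is semisimple, and since $\Lambda$ spans (Remark 7.2) with $|\Lambda|=120+15\cdot 60+5\cdot 5=1045$, it is a basis.

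For part (2) I would fix $\mu=1$ so that $\tau$ is the only parameter, set $R=\mathbb{C}[\tau]$ (a PID), and form the $R$-algebra $B_{R}$ given by the generators and relations of Definition 1.1 with parameters in $R$. The reductions of Lemma 5.2 and Lemma 7.2 use only relations with coefficients in $R$, so $B_{R}$ is generated as an $R$-module by $\Lambda$, hence is finitely generated; and right-exactness of $-\otimes_{R}\mathbb{C}_{\tau_{0}}$, where $\mathbb{C}_{\tau_{0}}=R/(\tau-\tau_{0})$, yields $B_{R}\otimes_{R}\mathbb{C}_{\tau_{0}}\cong B_{G_{H_3}}(\Upsilon)$ at each $\tau_{0}$. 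Writing $B_{R}\cong R^{r}\oplus\bigoplus_{j}R/(f_{j})$ with non-constant $f_{j}$, the fiber dimension is $\dim_{\mathbb{C}}B_{R}\otimes_{R}\mathbb{C}_{\tau_{0}}=r+\#\{j:f_{j}(\tau_{0})=0\}$. Now $\prod_{p=0}^{4}\det M^{p}$ is a nonzero polynomial in $\tau$ (the factor $\det M^{4}=(\tau-1)^{4}(\tau+4)$ is nonzero and each $\det M^{p}$ has leading term $\tau^{15}$), so the semisimple locus of part (1) is dense open; it meets the cofinite set where all $f_{j}$ are nonzero, and there the fiber dimension equals both $r$ and $1045$, forcing $r=1045$. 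For arbitrary $\tau_{0}$ we then get $\dim_{\mathbb{C}}B_{G_{H_3}}(\Upsilon)=1045+\#\{j:f_{j}(\tau_{0})=0\}\ge 1045$, while Lemma 7.2 gives $\le 1045$; hence equality holds, no $f_{j}$ vanishes anywhere, the torsion part is absent, and $B_{R}\cong R^{1045}$ is free with $R$-basis $\Lambda$. Specializing, $\Lambda$ is a $\mathbb{C}$-basis for every $\tau$, which is (2).

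I expect the main obstacle to be the flatness bookkeeping in part (2): checking that the relations of Definition 1.1 specialize correctly so that $B_{R}\otimes_{R}\mathbb{C}_{\tau_{0}}$ genuinely is the algebra at $\tau_{0}$, and recognizing that Lemma 7.2 is exactly what forbids jumps in the fiber dimension. The conceptual point to emphasize is that linear independence of $\Lambda$ is \emph{not} a closed condition and could a priori fail at special $\tau$; it is the combination of the generic semisimple computation with the uniform bound $\dim\le 1045$ that forces $B_{R}$ to be free and hence keeps $\Lambda$ a basis for all parameters.
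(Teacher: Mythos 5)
Your proof of part (1) is essentially the paper's own argument: irreducibility of $\rho_0,\dots,\rho_4$ from Proposition 7.1 and Lemma 7.3, distinctness from the representations pulled back along $\pi:B_{G_{H_3}}(\Upsilon)\to\mathbb{C}G_{H_3}$ because the $e_i$ act nontrivially, the Wedderburn--Artin/density lower bound $120+4\cdot 15^2+5^2=1045$, and Lemma 7.2 for the matching upper bound. For part (2), however, you take a genuinely different route. The paper proceeds concretely: on the generic locus it expands each product $x_ix_j=\sum_k f_{i,j}^k(\tau)x_k$ in the basis $\Lambda$, observes that the structure constants $f_{i,j}^k$ are polynomials in $\tau$ and that these expansion identities persist for all $\tau$, then defines an abstract $1045$-dimensional algebra $A(\Upsilon)$ with these structure constants, argues that associativity (a polynomial identity in $\tau$) extends from the dense generic locus to every $\tau$, produces a surjection $B_{G_{H_3}}(\Upsilon)\to A(\Upsilon)$, and concludes by the upper bound of Lemma 7.2 that it is an isomorphism. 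You instead work over $R=\mathbb{C}[\tau]$, observe that the presentation and the spanning arguments of Lemmas 5.2 and 7.2 are defined over $R$, use right-exactness of specialization to identify the fibers of $B_R$ with the algebras $B_{G_{H_3}}(\Upsilon)$, and then invoke the structure theorem for finitely generated modules over a PID: the generic fiber dimension $1045$ (part (1)) together with the uniform bound $\le 1045$ kills all torsion summands (a non-constant $f_j$ would have a root, where the fiber dimension would jump above $1045$), so $B_R\cong R^{1045}$ is free and $\Lambda$ specializes to a basis at every $\tau$. Both arguments rest on the same two pillars --- the generic computation and the uniform spanning bound --- but yours replaces the paper's somewhat delicate model-algebra construction (in particular its claim that the expansion identities and associativity hold for all $\tau$, justified only by Zariski density of polynomial identities) with the standard flatness/semicontinuity mechanism, which is cleaner and makes the "no dimension jumps" logic explicit; the paper's version, in exchange, exhibits the structure constants of the algebra explicitly as polynomials in $\tau$. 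Your closing remark is exactly the right conceptual point: linear independence of $\Lambda$ is not a closed condition, and only the combination of the two bounds forces freeness.
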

\begin{proof}
Suppose $\rho _5 $, $\rho _6$, $\cdots$, $\rho _N $ are all
irreducible representations of $B_{G_{H_3}}(\Upsilon )$ induced by
the quotient map $\pi : B_{G_{H_3}}(\Upsilon )\rightarrow \mathbb{C}
G_{H_3}$. They are different from $\rho _0 ,\rho _1 , \cdots $, and
 $\rho _4$ because $e_i $'s act as zero on them. We have $\sum _{i=5} ^N \dim \rho _i ^2 =|G_{H_3}|=120.$

 If $ \Pi ^4 _{p=0}\det M^p \neq 0$ then $\rho _1 ,\rho _2 , \cdots $, and
 $\rho _4$ are all irreducible.  In these cases by  Wedderburn-Artin theorem we
have

 $\dim B_{G_{H_3}}(\Upsilon ) \geq \sum _{i=5} ^N \dim \rho _i ^2 + \sum _{i=0} ^3 \dim \rho _i ^2 + \dim \rho _4 ^2
 = 120 + 900+ 25 =1045.$

Combining  with lemma 7.2 we have proved (1).

For later convenience , in the set $\Lambda$ we denote elements of
$G_{H_3}$ as $x_1 , \cdots ,x_{120} $, denote $e_1 , \cdots ,e_{15}
$ as $x_{121} , \cdots ,x_{135}$. Denote other elements of $\Lambda$
as $x_{136} ,\cdots ,x_{1045}$. Suppose first $ \Pi ^4 _{p=0}\det
M^p \neq 0$. In these case since $\{ x_i \}_{1\leq i\leq 1045 }$ is
a basis of $B_{G_{H_3}}(\Upsilon )$ , every product $x_i x_j$ can be
expanded uniquely as a linear sum of $x_k $'s: $(*)$  $x_i x_j =
\sum _{k=1} ^{1045} f_{i,j} ^k (\tau ) x_k $.

We observe the following facts.

(a)  $f_{i,j} ^k (\tau )$'s are all polynomial functions of $\tau $;

(b)  The identity $(*)$ actually holds in $B_{G_{H_3}}(\Upsilon )$
for all $\tau$'s.

Let $A = \mathbb{C} < X_1 , \cdots ,X_{1045}>$ be a vector space
with a basis $\{ X_1 , \cdots X_{1045} \}$. Define a product on $A$
by setting $X_i X_j =\sum _{k=1} ^{1045} f_{i,j} ^k (\tau ) X_k $.
This product make $A$ into an associative algebra when  $ \Pi ^4
_{p=0}\det M^p \neq 0$. Combining with (a) it follows that this
product is well-defined and making $A$ into an associative algebra
for all $\tau$'s. Denote this algebra as $A(\Upsilon )$. Recall we
have argued that in case of $H_3$ the data $\Upsilon$ consists of
one term $\tau $ essentially. A simple check of this product shows:

 (c) $A(\Upsilon )$ is generated by $\{ X_1 ,\cdots ,X_{135} \}$
 for all $\tau$'s.

 (d) The map $[ x_i \mapsto X_i \ for \ 1\leq i\leq 135 ]$ extends
 to a morphism $\phi :B_{G_{H_3}}(\Upsilon )\rightarrow A(\Upsilon
 )$ for all $\tau$'s.

 By (c) the morphism $\phi$ is surjective. Then by lemma 7.2 $\phi$ is an
 isomorphism and (2) follows.

\end{proof}

\nd{\bf Cellular structures.} let $(\Lambda , M , C ,* )$ be the
cellular structure of $\mathbb{C} B_{G_{H_3}}(\Upsilon ) $. The
algebra $B_{G_{H_3}}(\Upsilon)$ has a cellular structure $(\bar
{\Lambda },\bar { M}, \bar {C} ,*)$ as follows.

\begin{itemize}
  \item $\bar{\Lambda}=\Lambda \cup \{ \lambda _0 , \lambda _1 ,\cdots , \lambda _4 \} .
  $ Extend the partial order in $\Lambda$ by setting: $\lambda _i \prec \lambda
  $ for any $0\leq i\leq 4$ and any $\lambda \in \Lambda$; $\lambda _4 \prec \lambda _i
  $ for $0\leq i\leq 3$.
  \item Let $*$ be the involution defined in lemma 5.5.
  \item For $\lambda \in \Lambda , \bar{M}(\lambda ) = M(\lambda )$;
  for $0\leq i\leq 3$, $\bar{M}(\lambda _i )=\{ 0,1,\cdots ,14 \}$;
  $\bar{M}(\lambda _4 ) =\{ 1,2,\cdots ,5 \}$.
  \item For $\lambda \in \Lambda $, $S,T
\in \bar{M} (\lambda )$, let $\bar{C} ^{\lambda } _{S,T}
=j(C^{\lambda } _{S,T})$. Where $j$ is the naturally injection from
$\mathbb{C} G_{H_3}$ to $B_{G_{H_3}}(\Upsilon)$.
  \item For every $0\leq i\leq 14$, choose $w_i \in G_{H_3}$ such that
  $w_i e_0 w_i ^{-1} = e_i $ and $w_0 = id  $.   Set $J_0 =(1+s_2 + s_2 c + c) ,\  J_1 = (1-s_2 +c-cs_2 ),\  J_2
  =(1+s_2  - cs_2 -c ),\ J_3 =(1-s_2 +cs_2 -c)$, which are
  idempotents of the group algebra $\mathbb{C}<s_2 , c>$
  corresponding to $\sigma _0 , \cdots ,\sigma _3 $.
Then for $0\leq \alpha \leq 3$ and  $0\leq i ,j \leq 14$, set
$\bar{C}^{\lambda_{\alpha}} _{i,j} = w_i J_{\alpha} e_0 w_j ^{-1}.$
  \item As before suppose $R_{\alpha} =\{ i_{\alpha} ,j_{\alpha}, k_{\alpha}
  \}$. For $1\leq \alpha ,\beta \leq 5$ choose $w^{\alpha} _{\beta}\in W_{H_3}$
  such that $ w^{\alpha } _{\beta} (R_{\alpha} )= R_{\beta}$. Where
  the conjugating action is defined in the beginning of this
  section.Then
  set $\bar{C} ^{\lambda _4 } _{\beta ,\alpha } = w^{\alpha} _{\beta} e_{i_{\alpha}}e_{j_{\alpha }}. $
\end{itemize}

\begin{thm}
Above data define a cellular structure on $B_{G_{H_3}}(\Upsilon)$.
\end{thm}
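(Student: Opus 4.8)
The plan is to verify the three Graham--Lehrer axioms (C1), (C2), (C3) directly against the vector space decomposition $B_{G_{H_3}}(\Upsilon) = j(\mathbb{C}G_{H_3}) \oplus V_1 \oplus V_2$, where $V_1 = \mathrm{span}\{we_i\}$ and $V_2 = \mathrm{span}\{we_ie_j : s_i\perp s_j\}$ coming from the basis $\Lambda$ of Theorem 7.1 and Remark 7.2, of dimensions $120$, $900$, $25$. The poset $\bar\Lambda$ is built precisely so that these summands are the successive ``layers'': $V_2$ at the bottom ($\lambda_4$), $V_1$ in the middle ($\lambda_0,\dots,\lambda_3$), and $j(\mathbb{C}G_{H_3})$ at the top ($\lambda\in\Lambda$). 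Throughout I take the cellular datum of $\mathbb{C}G_{H_3}$ to be the standard one whose anti-involution is $w\mapsto w^{-1}$, which agrees with the $*$ of Lemma 5.5 on $G_{H_3}$.

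For (C1), since the three families have total cardinality $120+900+25 = 1045 = \dim B_{G_{H_3}}(\Upsilon)$ and lie in the three summands, it suffices to check each is a basis of its summand. The top family $\{j(C^\lambda_{S,T})\}$ is a basis of $j(\mathbb{C}G_{H_3})$ because $\mathbb{C}G_{H_3}$ is cellular and $j$ is injective (Theorem 5.1). For the middle family I would use that $J_0,\dots,J_3$ are, up to a scalar, the four primitive idempotents of $\mathbb{C}\langle s_2,c\rangle$, so that $1 = \frac14\sum_\alpha J_\alpha$ and hence $we_i = w w_i e_0 w_i^{-1} = \frac14\sum_\alpha \sigma_\alpha(h)\,w_{i'}J_\alpha e_0 w_i^{-1}$ after writing $ww_i = w_{i'}h$ with $h\in G_0$; this exhibits every $we_i$ in the span of $\{w_iJ_\alpha e_0 w_j^{-1}\}$, which is then a basis by the dimension count. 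The bottom family $\{w^\alpha_\beta e_{i_\alpha}e_{j_\alpha}\}$ is exactly the $V_2$-part of the basis $\Lambda$ recorded in Remark 7.2.

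For (C2) I would compute $*$ on each family. On the top it is the compatibility of $w\mapsto w^{-1}$ with the cellular $*$ of $\mathbb{C}G_{H_3}$. For $\lambda_\alpha$, since $s_2,c,cs_2$ are self-inverse we have $J_\alpha^* = J_\alpha$, and since every element of $G_0$ fixes $H_0$ it commutes with $e_0$ by relation (3), giving $e_0J_\alpha = J_\alpha e_0$; hence $*(w_iJ_\alpha e_0 w_j^{-1}) = w_j e_0 J_\alpha w_i^{-1} = w_j J_\alpha e_0 w_i^{-1} = \bar C^{\lambda_\alpha}_{j,i}$. For $\lambda_4$, relation (4) together with Lemma 7.1(e) gives $e_{i_\alpha}e_{j_\alpha} = e_{j_\alpha}e_{i_\alpha}$, so this product is $*$-invariant, and relation (3) rewrites $*(w^\alpha_\beta e_{i_\alpha}e_{j_\alpha}) = (w^\alpha_\beta)^{-1}e_{i_\beta}e_{j_\beta}$; choosing the coset representatives so that $w^\beta_\alpha = (w^\alpha_\beta)^{-1}$ (permissible since $H_\alpha = G_\alpha$ stabilizes $e_{i_\alpha}e_{j_\alpha}$, Remark 7.1) identifies this with $\bar C^{\lambda_4}_{\alpha,\beta}$.

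The heart of the proof is (C3), and here the main obstacle lies. First I would note each $B(<\lambda)$ is a left ideal---$B(<\lambda_4)=0$, $B(<\lambda_\alpha)=V_2$, and $B(<\lambda)\supseteq V_1\oplus V_2$ for $\lambda\in\Lambda$---using that left multiplication by $e_l$ raises $e$-length while $e$-length $3$ products collapse into $V_2$ by Lemma 7.1(e); this reduces (C3) to the generators $a=w$ and $a=e_l$. The content is then that, fixing the column index, the cell basis realizes the representations of Proposition 7.1 and Lemma 7.3 as left modules on the subquotients: the assignment $v_i\mapsto [w_iJ_\alpha e_0 w_j^{-1}]$ intertwines $\rho_\alpha$ with left multiplication modulo $V_2$, and the $\lambda_4$-basis intertwines $\rho_4$ with left multiplication. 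For $a=w$ this is immediate from $ww_iJ_\alpha e_0 = \sigma_\alpha(h)\,w_{i'}J_\alpha e_0$. The genuinely delicate step is $a=e_l$ on the $\lambda_\alpha$-layer: writing $e_l w_i = w_i e_m$ with $m=w_i^{-1}(l)$ and expanding $e_m J_\alpha e_0 = \sum_g c^\alpha_g\, g\, e_{g^{-1}(m)}e_0$, I must show that the perpendicular terms $e_{g^{-1}(m)}e_0$ land in $V_2$ and vanish modulo $B(<\lambda_\alpha)$, while the non-perpendicular terms reduce by relation (5) to $s_re_0$ and recombine with $J_\alpha$ to reproduce exactly the entry $m^\alpha_{l,i}$ (equivalently $\sigma_\alpha(s_k)$ with $s_ks_ls_k=s_i$) times $\bar C^{\lambda_\alpha}_{l,j}$. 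Since these structure constants are entries of $\rho_\alpha$ (resp. $\rho_4$), they are manifestly independent of the fixed column index, which is the independence required by (C3); the analogous computation on the $\lambda_4$-layer is simpler, staying within $V_2$ via Lemma 7.1(e).
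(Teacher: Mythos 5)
Your proof is correct and follows essentially the same route as the paper's: (C1) from the basis/dimension count of Theorem 7.1, (C2) by direct computation with the anti-involution of Lemma 5.5 (using $e_0 J_\alpha = J_\alpha e_0$ and Remark 7.1 to handle the coset representatives), and (C3) by computing left multiplication by $w$ and by $e_l$ on each cell layer modulo the lower layers, using relations $(4)$--$(6)$ and Lemma 7.1(e) exactly as the paper does. Your write-up is in places more explicit than the paper's --- notably the spanning argument via $1=\frac{1}{4}\sum_\alpha J_\alpha$ for the $\lambda_\alpha$-cells (the paper just cites Theorem 7.1 for (C1), although the cell basis is not literally the set $\Lambda$), and the observation that the cell subquotients realize $\rho_\alpha$ and $\rho_4$, which makes the column-independence demanded by (C3) transparent --- but these are refinements of the same argument rather than a different approach.
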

\begin{proof}
$(C1)$ follows from Theorem 7.1. $(C2)$ is proved by the following
identities.

$*(\bar{C}^{\lambda_{\alpha}} _{i,j}) = *(w_j
^{-1})*(e_0)*(J^{\alpha})
*(w_i ) = w_j e_0 J^{\alpha} w_i ^{-1} =\bar{C}^{\lambda_{\alpha}} _{j,i} $;

$*(\bar{C}^{\lambda _4} _{\beta ,\alpha })=
*(e_{j_{\alpha}})*(e_{i_{\alpha}})*(w^{\alpha} _{\beta})= e_{j_{\alpha}} e_{i_{\alpha}} (w^{\alpha} _{\beta}) ^{-1} =
 (w^{\alpha} _{\beta}) ^{-1} e_{i_{\beta} } e_{j_{\beta}} = w^{\beta } _{\alpha} h e_{i_{\beta}}
 e_{j_{\beta}}=\bar{C}^{\lambda _4} _{\alpha ,\beta }$. Where the
 third "=" is because $w^{\alpha} _{\beta} (R_{\alpha} )=R_{\beta}$
 and $e_{i_{\beta}}e_{j_{\beta}}=e_{j_{\beta}}e_{k_{\beta}}=e_{k_{\beta}}e_{i_{\beta}}.
 $ By Remark 7.1 , there is some $h\in G_{\beta}$ such that the fourth
 "=" holds. Also by Remark 7.1 we get the fifth "=". $(C3)$ in cases
 of $\lambda _i (0\leq i\leq 3)$ are proved by the following
 identities.

 $(1)$ $w \bar{C}^{\alpha} _{i,j} = ww_i J^{\alpha}e_0 w^{-1} _j = w_k (w^{-1} _k w w_i)J^{\alpha}e_0 w^{-1} _j
 =\sigma _{\alpha} (w^{-1} _k w w_i) \bar{C}^{\alpha} _{k,j}$. Where
 $k$ is determined by $w^{-1} _k w w_i \in G_0 $. The last "=" is
 because $v J^{\alpha} = \sigma _{\alpha} (v) J^{\alpha}$ for $v\in G_0
 $.

 $(2)$ $e_l \bar{C}^{\alpha} _{i,j} = e_l w_i J^{\alpha}e_0 w^{-1} _j =
e_l e_i w_i J^{\alpha} w^{-1} _j $
\[
  = \left\{
 \begin{array}{ll}
    0 \ mod(I ^{< \lambda _{\alpha}})  &\mbox{ when $s_l \perp s_i$ ;}\\
    s_{l,i} e_i w_i J^{\alpha} w^{-1} _j =s_{l,i} w_i J^{\alpha} e_0
    w^{-1} _j = \\
    \sigma _{\alpha} (w^{-1} _k s_{l,i} w_i ) \bar{C}^{\alpha} _{k,j} mod(I ^{< \lambda
    _{\alpha}})
    &\mbox{when $s_l$ isn't perpendicular to $s_i$ .}
\end{array}
\right.
\]

where $s_{l,i}$ is the unique reflection such that $s_{l,i }s_i
    s_{l,i}=s_l$
,  $k$ is determined by $w^{-1} _k s_{l,i} w_i \in G_0$, and $I ^{<
\lambda _{\alpha}}$ is the ideal generated by $\{ \bar{C}^{\lambda }
_{i,j} \} _{\lambda < \lambda _{\alpha}}.$

In the case of $(C4)$ we have

$(3)$ $w \bar{C}^{\lambda _4} _{\beta ,\alpha} = w w^{\alpha}
_{\beta} e_{i_{\alpha}} e_{j_{\alpha}}= \bar{C}^{\lambda _4}
_{\gamma ,\alpha} .$ Where $\gamma$ is determined by $ww^{\alpha}
_{\beta} (R_{\alpha}) =R_{\gamma}$. The last "=" is by using remark
7.1.

$(4)$ \[ e_i \bar{C}^{\lambda _4} _{\beta ,\alpha} = e_i
e_{i_{\beta}} e_{j_{\beta}} w^{\alpha} _{\beta} = \left\{
\begin{array}{ll} \tau \bar{C}^{\lambda _4} _{\beta ,\alpha} &\mbox{
when $i\in \{ i_{\beta} ,j_{\beta} ,k_{\beta}
\} $. }\\
s_{i,i_{\beta}} e_{i_{\beta}} e_{j_{\beta}} w^{\alpha} _{\beta}
=\bar{C}^{\lambda _4} _{\gamma ,\alpha} &\mbox{ otherwise. }
\end{array}
\right.
\]
Where $\gamma$ is determined  by $i\in R_{\gamma}$.
\end{proof}

\section{Canonical Presentations  }

\subsection{Real cases }

We define an algebra $B^{'} _G  (\Upsilon )$ with certain canonical
presentation when $G$ is a Coxeter group or a cyclotomic reflection
group of type $G(m,1,n)$, then prove $B^{'} _G (\Upsilon )$ is
isomorphic to $B _G (\Upsilon )$. First we do it in cases of
dihedral groups.

\begin{defi}

The algebra $B^{'} _{G } (\Upsilon)$ have the following presentation
when $G$ is $G_n$,  the dihedral group of type $I_2 (n )$.
$$TABLE\ 3.\ Presentation\  for\  B _{G_{n}} (\Upsilon ).$$

\begin{tabular}{|l|l|l|}

\hline

&  $B_{G_{2k+1}}(\Upsilon) $       &   $B_{G_{2k}}(\Upsilon)$ \\

\hline

generators &  $S_0 ,S_1 , E_0 ,E_1 $   &  $S_0 ,S_1 , E_0 ,E_1 $ \\

\hline

relations  &  $1)$ $[S_0 S_1 \cdots ]_{2k+1} =[S_1 S_0 \cdots
]_{2k+1} $ ; & $1)$ $[S_0 S_1 \cdots ]_{2k} =[S_1 S_0 \cdots ]_{2k}
; $ \\

    & $2)$ $S_0 ^2 =S_1 ^2 =1 $;  & $2)$ $S_0 ^2 =S_1 ^2 =1 $;   \\
    & $3)$ $S_i E_i = E_i = E_i S_i $ for $i=0,1$;  & $3)$ $S_i E_i = E_i = E_i S_i $ for $i=0,1$; \\
     & $4)$ $E_i ^2 = \tau E_i $ for $i=0,1 $;  & $4)$ $E_i ^2 = \tau_i E_i $ for $i=0,1 $; \\
    &  $5)$ $E_0 [S_1 S_0 \cdots ]_{2i-1 } E_0$ & $5)$ $E_{0} [S_1 S_0 \cdots ]_{2i-1 } E_{0}$ \\
    &$=\mu E_0 $ for $1\leq i\leq
k$; & $= (\mu _i +\mu _{i+k}
 ) E_0 $ for $1\leq i\leq k$; \\
    & $6)$ $E_1 [S_0 S_1 \cdots ]_{2i-1 } E_1 $ & $6)$ $E_1 [S_1 S_0 \cdots ]_{2i-1} E_1 $   \\
     & $=\mu E_1 $ for $1\leq i\leq
k$;  & $= (\mu _i +\mu
_{i+k}  )E_1 $ for $1\leq i\leq k$;\\
    & $7)$ $[S_0 S_1 \cdots ]_{2k } E_0 $& $7)$ $[S_1 S_0 \cdots ]_{2k-1} E_0 $  \\
    & $= E_1 [S_0 S_1 \cdots ]_{2k } $ ; & $ = E_0 [S_1 S_0 \cdots ]_{2k-1}=
E_0$; \\
 & $8)$ $[S_1 S_0 \cdots ]_{2k } E_1 $ & $8)$ $[S_0 S_1 \cdots ]_{2k-1} E_1 $ \\
  & $= E_0 [S_1 S_0 \cdots ]_{2k }.$ &$= E_1 [S_0 S_1 \cdots
]_{2k-1}=E_1$ ;  \\
 & & $9)$ $E_1 W E_0 = E_0 W E_1 =0 $. \\

 \hline

 \end{tabular}\\

 Where in $9)$ $W$ is any element composed by $\{ S_0 , S_1 \}$.

\end{defi}

\begin{thm}
If $G$ is a dihedral group, then $B_G (\Upsilon )$ is isomorphic to
$B^{'} _G (\Upsilon )$.

\end{thm}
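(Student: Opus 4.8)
The plan is to construct an explicit surjection $\phi\colon B'_{G_m}(\Upsilon)\to B_{G_m}(\Upsilon)$ and then match dimensions using Theorem~6.1. First I would define $\phi$ on generators by $S_i\mapsto s_i$ and $E_i\mapsto e_i$ for $i=0,1$, and check that the elements $s_0,s_1,e_0,e_1\in B_{G_m}(\Upsilon)$ satisfy all the relations of Table~3. Relations $1)$ and $2)$ are the Coxeter presentation of $G_m=\langle s_0,s_1\rangle$ combined with relation $(0)$ of Definition~1.1; relation $3)$ is relation $(1)$; and relation $4)$ is relation $(2)$ (with $m_i=\tau_i$). The genuine computations are relations $5)$--$8)$ (and $9)$ when $m$ is even), which I would verify with the dihedral bookkeeping of Section~6. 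For instance, since $[s_1 s_0\cdots]_{2i-1}=s_i$ and $s_i(H_0)=H_{2i}$, relation $(3)$ gives $s_i e_0=e_{2i}s_i$ and $e_{2i}s_i=s_i e_0$, while relation $(5)$ together with $R(0,2i)=\{s_i\}$ gives $e_0 e_{2i}=\mu\,s_i e_{2i}$; chaining these yields $e_0[s_1 s_0\cdots]_{2i-1}e_0=e_0 e_{2i}s_i=\mu\,s_i e_{2i}s_i=\mu\,e_0$, which is relation $5)$. Relations $6)$--$8)$ are analogous conjugation identities, and relation $9)$ in the even case is immediate from relation $(6)$, since $R(i,j)=\emptyset$ whenever $i+j$ is odd.

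Granting that $\phi$ is a homomorphism, it is surjective: $s_0$ and $s_1$ generate every $T_w$, and every $e_i$ is a $G_m$-conjugate of $e_0$ or $e_1$ --- one $G_m$-orbit of hyperplanes when $m$ is odd, two orbits (even- and odd-indexed) when $m$ is even --- so all generators of $B_{G_m}(\Upsilon)$ lie in the image.

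It remains to show $\dim B'_{G_m}(\Upsilon)\le\dim B_{G_m}(\Upsilon)$. Here I would use relations $5)$--$9)$ to put an arbitrary monomial in $S_0,S_1,E_0,E_1$ into a normal form. Relations $5)$ and $6)$ collapse any product $E_0(\cdots)E_0$ or $E_1(\cdots)E_1$ to a scalar times a single $E$, and (for $m$ even) relation $9)$ kills the mixed products $E_1 W E_0$ and $E_0 W E_1$; together with relations $3),7),8)$ this reduces every monomial either to a reduced word $W\in G_m$ or to $W\widehat E_j$, where $\widehat E_j$ is the word conjugating $E_{[j]}$ into position $j$ and $W$ ranges over a transversal. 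Counting these normal forms gives a spanning set of cardinality at most $2m+m^2$ when $m$ is odd and $2m+\tfrac{m^2}{2}$ when $m$ is even, matching the dimensions in Theorem~6.1. Since $\phi$ is a surjection of algebras of equal finite dimension, it is an isomorphism.

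The step I expect to be the main obstacle is the normal-form reduction in the last paragraph, hand in hand with the relation checks $5)$--$8)$: both require careful index arithmetic modulo $m$ (the shifts of the type $v_{i,j,p,q}$, $u_{i,j,p,q}$ appearing in Section~6) and a separate treatment of the parity of $i+j$ and of $m$, since $R(i,j)$ is a singleton, empty, or a pair depending on these parities. The even case is the more delicate one, because the splitting of $\{e_i\}$ into two orbits is precisely what forces the extra relation $9)$ and the $\mu_i+\mu_{i+k}$ coefficients in relations $5),6)$.
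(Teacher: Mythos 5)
Your proposal is correct, but it replaces half of the paper's argument with a genuinely different mechanism. The paper also starts from the homomorphism $S_i\mapsto s_i$, $E_i\mapsto e_i$ (its $\psi$), with exactly the relation-to-relation matching you give: $1),2)\to(0)$; $3)\to(1)$; $4)\to(2)$; $5),6)\to(5)$; $7),8)\to(1)'$ (even case) or $(3)$ (odd case); $9)\to(6)$. Where you diverge is in proving bijectivity: the paper constructs an explicit inverse morphism $\phi\colon B_{G_m}(\Upsilon)\to B'_{G_m}(\Upsilon)$ by defining, inside $B'$, the conjugates $E_{2i}=[S_1S_0\cdots]_{2i-1}E_0[S_1S_0\cdots]_{2i-1}$ and $E_{2i-1}=[S_1S_0\cdots]_{2i-2}E_1[S_1S_0\cdots]_{2i-2}$, verifying that these together with the $S_i$ satisfy all relations of Definition~1.1 (the well-definedness of the conjugates being a special case of Lemma~8.1, and relation $(5)$ being the computation $s_iE_0s_iE_0=(\mu_is_i+\mu_{i+k}s_{i+k})E_0$), and then observing that $\phi$ and $\psi$ are mutually inverse. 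You instead prove surjectivity of your $\phi$ and bound $\dim B'_{G_m}(\Upsilon)$ from above by a normal-form spanning argument, closing the loop with the dimension formula of Theorem~6.1. Your route is logically sound and non-circular (Theorem~6.1 is proved independently of Section~8), and it spares you from verifying every Definition-1.1 relation among the conjugated $E_j$'s inside $B'$; but the in-$B'$ work is comparable, since the normal-form collapse of $E_0WE_0$ for arbitrary $W$ requires deriving essentially the same identities (e.g.\ $E_0s_aE_0=\mu E_0$ for \emph{all} $a\neq 0$ needs relations $1),3),7),8)$ in addition to $5),6)$, as you anticipate). The real trade-off is portability: the paper's two-sided construction is what generalizes to Theorem~8.4 (arbitrary finite Coxeter groups) and Theorem~8.5, where no dimension formula analogous to Theorem~6.1 is available, whereas your dimension-counting shortcut is confined to the dihedral case where the dimension is known in advance.
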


\begin{proof}

We consider the cases when $G$ is of type $I_2 (2k) $. The cases for
$G$ of type $I_2 (2k+1)$ are similar and easier.

Denote the algebra $B_{G } (\Upsilon)$ ,$B^{'} _G (\Upsilon )$  as
$B$, $B ^{'}$ respectively.  Let $j$ be the morphism from
$\mathbb{C} G $ to $B^{'} $ by mapping $s_i \in G $ to $S_i \in
B^{'}$ for $i= 0,1$. Let $\pi $ be the morphism from $B^{'}$ to
$\mathbb{C} G $ by mapping $S_i \in B^{'}$ to $s_i \in G $, $E_i $
to 0. There is $\pi \circ j =id _{\mathbb{C} G }$, which implies
that $j$ is injective. For saving notations we denote $j(w)$ as $w$
for $w\in G$.

For $2\leq 2i \leq 2k-2 $, choose any $w \in G $ such that $s_{2i} =
ws_0 w^{-1}$ and let $E_{2i} = w E_0 w^{-1} $. $E_{2i} $ is well
defined with no dependence on choice of $w$ (a special case of Lemma
8.1 later ). For example, choose $w=[S_1 S_0 \cdots ]_{2i-1} $ so
$E_{2i} = [S_1 S_0 \cdots ] _{2i-1} E_0 [S_1 S_0 \cdots ] _{2i-1}$.
Similarly for $3\leq 2i-1 \leq 2k-1 $, define $E_{2i-1} = [S_1 S_0
\cdots ]_{2i-2} E_1 [S_1 S_0 \cdots ]_{2i-2} = [S_1 S_0 \cdots
]_{2i-1} E_1 [S_1 S_0 \cdots ]_{2i-1} $. Define a map $\phi $ from
the set of generators of $B$ to $B^{'}$ as : $\phi (T_w ) =w
(=j(w))$; $\phi (e_i) = E_i $ for $0\leq i \leq 2k-1 $. Then $\phi $
extends to a morphism from $B$ to $B^{'}$.  To prove it we only need
to certify that $\phi $ keep all the relations in Definition 1.1.
The case of relation $(0)$ is straightforward. Relation $(1)$ is by
$3)$ in Definition 8.1 of $B_{G_{2k}}(\Upsilon)$ ; $(1)^{'}$ is by
$7) ,8)$; $(2)$ is by $4)$; $3)$ is by later Lemma 8.1; case of $4)$
doesn't arise here; $(6)$ is by $9)$; $5)$ is by the following
computations. First consider the relation for $e_{2i} e_0$. We have

$\phi (e_{2i}) \phi ( e_0 ) = E_{2i} E_0 = [s_1 s_0 \cdots ]_{2i-1}
E_0 [s_1 s_0 \cdots ]_{2i-1} E_0 =s_i E_0 s_i E_0 = s_i (\mu _i +
\mu _{[i+k]} c) E_0  \\
= (\mu _i s_i + \mu _{[i+k]} s_{[i+k]})E_{0} = \phi ( (\mu
_{k(2i,0)} s_{k(2i,0)} + \mu _{k(2i,0)^{'} } s_{k(2i,0) ^{'}} ) e_0
)=\phi (e_{2i} e_0 ).$

Relations for other $e_{2i} e_{2j}$ can be obtained by suitable
conjugating action of $G$ on above equation. The relations for
$e_{2i+1} e_{2j+1} $ are similar.

There is a natural morphism $\psi : B^{'} \rightarrow B$ by
extending the correspondence $S_0 \mapsto s_0 $, $S_1 \mapsto s_1 $,
$E_0 \mapsto e_0 $, $E_1 \mapsto e_1 $.  The fact that $\psi$ keep
relations $1), 2)$ of Definition 8.1 is by $(1)$ of Definition 1.1;
$3)$ is by $(1)$; $4)$ is by $(2)$; $5), 6)$ are by $(5)$; $7),8)$
are by $(1)^{'}$; $9)$ is by $(6)$.

\end{proof}

Suppose $G_M $ is a finite Coxeter group with Coxeter matrix $M=
(m_{i,j})_{n\times n} $. The group $G_M $ has the following
presentation:
$$< s_1 ,s_2 ,\cdots ,s_n \ |[s_i s_j \cdots ]_{m_{i,j}} =[s_j s_i\cdots]_{m_{i,j}}\  for\  i\neq
j;  s_i ^2 =1 \ for\  any\  i> .$$

It is well-known that $G_M $ can be realized as a group generated by
reflections in some $n$ dimensional linear space through cetain
geometric representation  $\rho : G\rightarrow GL(V)$. We identify
$G_M$ with its image in $GL(V)$, denote $\rho (s_i )$ as $s_i$.
Since $G_M$ is real, the index set of reflection hyperplanes $P$ are
in one to one correspondence with the set of reflections $R$. So it
is convenient to denote the reflection hyperplane of $s\in R $ as
$H_s $ and write $e_i $ in the Definition 1.1 as $e_s $. In the
following we denote $G_M $ as $G$. For $w\in G$, any expression $w=
s_{i_1} s_{i_2 }\cdots s_{i_r }$ with minimal length is called a
reduced form of $w$, and define the length of $w$ as $l(w) =r$.
Above definition of $B^{'}_G (\Upsilon)$ when $G$ is a dihedral
group invoke the following definition of $B^{'}_{G_M} (\Upsilon)$.

\begin{defi}
For any Coxeter matrix $M= (m_{i,j})_{n\times n}$, the algebra
$B^{'}_{G_M} (\Upsilon)$ is defined as follows. Denote $\tau_{s_i}$
in $\Upsilon$ as $\tau_i$. If we don't give range for an index then
it means "for all". The generators are $S_1 ,\cdots ,S_n ,E_1
,\cdots ,E_n $. The relations are \begin{align*}
&1) S_i ^2 =1 ; &8)&  E_i w E_j =0  \mbox{ for any word } w  \\
& 2) [S_i S_j \cdots ]_{m_{i,j }} =[S_j S_i \cdots ]_{m_{i,j}}; & &
\mbox{composed from } \{S_i ,S_j
\} \mbox{If } m_{i,j}= 2k>2 ; \\
& 3)  S_i E_i =E_i =E_i S_i ;  & 9)&  E_i [S_j S_i \cdots ]_{2l-1 }
E_i = (\mu _s +\mu _{s^{'}} )
E_i  \\
& 4)  E_i ^2 = \tau _i E_i ;  & &\mbox{for }  1\leq l\leq k, \mbox{If } m_{i,j}= 2k>2 .  \\
&  5)   S_i E_j =E_j S_i  \mbox{if } m_{i,j } = 2 ; &10)& E_i [S_j S_i \cdots ]_{2l-1 } E_i =\mu_{s_{\epsilon }} E_i  \\
& 6)   E_i E_j =E_j E_i  \mbox{if } m_{i,j } = 2 ; & &\mbox{for } 1\leq l\leq k, \mbox{if } m_{i,j} = 2k+1 ;  \\
& 7)  [S_j S_i \cdots ]_{2k -1} E_i = E_i [S_j S_i \cdots ]_{2k
-1} =E_i ,   & &\mbox{Where } \epsilon=i(j) \mbox{if } l \mbox{is odd (even) }.  \\
& \mbox{if } m_{i,j}=2k>2 ;  &11)& [S_i S_j \cdots ]_{2k } E_i = E_j
[S_i S_j \cdots ]_{2k }
\mbox{If } m_{i,j} = 2k+1 .  \\
\end{align*}

\end{defi}

\begin{rem} If $M$ is irreducible and of simply laced type , i.e, $m_{i,j} \in \{ 1,2,3
\}$, we can set all $\mu_s =1$ by Lemma 5.3, so above definition
coincide with the definition of simply laced Brauer algebras in
\cite{CFW}.  Above definition includes the case $m_{i,j}=\infty$: in
that case there are no other relations between $S_i ,S_j ,E_i ,E_j$
except $(1),(3),(4)$.
\end{rem}

Let $M$ be as above.  The Artin group $A_M $ has the following
presentation.
$$<\sigma _1 , \sigma _2 ,\cdots , \sigma _n |\ [\sigma _i \sigma _j \cdots  ]_{m_{i,j}} =[\sigma _j \sigma _i
\cdots ]_{m_{i,j}}\ for\  i\neq j >.$$ Here we denote $A_M $ as $A$.
Let $A^{+} $ be the monoid generated with the same set of generators
and relations. Let $J : A^{+} \rightarrow A $ be the natural
morphism of of monoids. It is proved that $J$ is injective for all
Artin groups Garside$\cite{Ga}$ Brieskorn-Saito $\cite{BS}$ Paris
$\cite{Pa}$ . The following theorem is well known.
\begin{thm}
For any $w\in G $, suppose $l(w) =r $ and let $s_{i_1 } \cdots
s_{i_r } $ and $s_{j_1 } \cdots s_{j_r }$ be two reduced forms of
$w$, then in $A^{+}$ we have $\sigma _{i_1} \cdots \sigma _{i_r } =
\sigma _{j_1} \cdots  \sigma _{j_r } $.

\end{thm}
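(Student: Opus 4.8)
The plan is to recognize this as the monoid form of the classical Tits--Matsumoto word property for Coxeter groups and to reduce the claim to the combinatorial fact that any two reduced words for $w$ are linked by braid moves alone. First I would invoke Tits' solution of the word problem: any two reduced expressions $s_{i_1}\cdots s_{i_r}$ and $s_{j_1}\cdots s_{j_r}$ of a fixed $w\in G$ can be transformed into one another by a finite sequence of elementary braid moves, where a single move replaces an alternating factor $[s_a s_b \cdots]_{m_{a,b}}$ occurring inside the word by the reversed alternating factor $[s_b s_a \cdots]_{m_{a,b}}$. The essential point is that, since both words are reduced and share the common minimal length $r$, no shortening move of the form $s_a s_a \mapsto 1$ is ever required; only the braid relations of $G$ are used.

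Second, I would observe that these braid moves are precisely the defining relations of the monoid $A^{+}$. By construction $A^{+}$ is presented by generators $\sigma_1,\dots,\sigma_n$ subject only to $[\sigma_i \sigma_j \cdots]_{m_{i,j}} = [\sigma_j \sigma_i \cdots]_{m_{i,j}}$ for $i\neq j$. Hence every braid move on a word in the $s_i$ lifts verbatim to an equality between the corresponding words in the $\sigma_i$ inside $A^{+}$. Chaining the finite sequence of moves supplied by Tits' theorem then yields $\sigma_{i_1}\cdots\sigma_{i_r} = \sigma_{j_1}\cdots\sigma_{j_r}$ in $A^{+}$, which is the assertion.

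If one prefers a self-contained argument rather than citing Tits, I would proceed by induction on $r=l(w)$, the base cases $r\leq 1$ being trivial. For the inductive step, write the two reduced words as $s_a u$ and $s_b u'$ with $u,u'$ reduced of length $r-1$. If $a=b$, cancel $s_a$, apply the inductive hypothesis to $u$ and $u'$, and prepend $\sigma_a$. If $a\neq b$, set $m=m_{a,b}$; repeated use of the exchange condition for Coxeter groups shows that each of the two reduced words admits a reduced expression beginning with the alternating word $[s_a s_b \cdots]_m$, and that the passage to this common prefix is effected using only braid relations, which hold in $A^{+}$. One then strips the common prefix $[\sigma_a \sigma_b \cdots]_m$ and invokes the inductive hypothesis on the shorter reduced remainders.

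The main obstacle is exactly this inductive step with $a\neq b$: one must verify that the manipulations bringing both words to a common form $[s_a s_b \cdots]_m \cdot (\text{reduced remainder})$ can be realized purely through braid relations, so that they remain valid in $A^{+}$, with no hidden cancellation $s_a^2=1$ creeping in. This is precisely the content that makes Tits' theorem nontrivial, so in practice I would adopt the first approach and cite the Tits--Matsumoto word property directly, keeping the induction only as a fallback.
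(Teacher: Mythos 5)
Your proposal is correct and takes essentially the same approach as the paper: the paper offers no proof at all, simply declaring the theorem well known, and the result it is invoking is precisely the Tits--Matsumoto word property that you cite and then transport to $A^{+}$ via the observation that braid moves are exactly the defining relations of the monoid. Your inductive fallback sketch is not needed (and, as you yourself note, its step with $a\neq b$ is the nontrivial content of the word property itself), but your primary argument is the standard, complete justification.
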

So there is a well defined injective map $\tau : G \rightarrow
A^{+}$ as follows. For $w\in G$, let $s_{i_1 } \cdots s_{i_k } $ be
a reduced form of $w$ and let $\tau (w) = \sigma _{i_1 } \cdots
\sigma _{i_k }$. Denote the natural map from $A^{+}$ to $G$
extending $\sigma _i \mapsto s_i $ as $\pi $. In $A^{+}$ we denote
$b \prec c $ if there is $a \in A^{+}$ such that $ab=c $. This
define a partial order for $A^{+}$. Here is an important result in
Artin group theory.

\begin{thm}[\cite{BS}, \cite{Ga}]
For $a\in A^{+}$, if $\sigma _i \prec a$, $\sigma _j \prec a$, then
$[\cdots \sigma _j \sigma _i ]_{m_{i,j}} \prec a$.

\end{thm}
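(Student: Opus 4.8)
The plan is to deduce the statement from the fundamental lattice property of the Artin monoid $A^{+}$: any two elements admitting a common left-multiple admit a \emph{least} one. Recall that in the paper's convention $b \prec c$ means $c = ab$ for some $a\in A^{+}$, i.e. $b$ is a suffix (right divisor) of $c$ and $c$ a left-multiple of $b$; so the hypothesis says $a$ is a common left-multiple of $\sigma_i$ and $\sigma_j$, and I must show that the least common left-multiple of these two generators is already a suffix of $a$. Observe first that the two alternating words $[\sigma_i\sigma_j\cdots]_{m_{i,j}}$ and $[\sigma_j\sigma_i\cdots]_{m_{i,j}}$ coincide in $A^{+}$ by the defining braid relation; call this common element $\Delta_{i,j}$. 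Since its two spellings end in $\sigma_j$ and in $\sigma_i$ respectively, $\Delta_{i,j}$ has both $\sigma_i$ and $\sigma_j$ as suffixes, and one checks it is the shortest element with this property. Granting that $\Delta_{i,j}$ is the \emph{least} common left-multiple of $\sigma_i$ and $\sigma_j$, the theorem is immediate: $a$ is a common left-multiple, hence $\Delta_{i,j}\prec a$, and $\Delta_{i,j}=[\cdots\sigma_j\sigma_i]_{m_{i,j}}$.

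The two ingredients I would establish are (i) cancellativity of $A^{+}$ and (ii) existence of the least common left-multiple of a pair of generators. For (i) I would use that a positive word is determined modulo the relations $[\sigma_i\sigma_j\cdots]_{m_{i,j}}=[\sigma_j\sigma_i\cdots]_{m_{i,j}}$, viewed as length-preserving rewriting moves; since these moves preserve length and the abelianisation records the multiplicity of each generator, an induction on length shows that $xc=yc$ forces $x=y$, and symmetrically on the other side, giving right and left cancellativity. The self-duality of $A^{+}$ (its opposite monoid is isomorphic to $A^{+}$ via word reversal, the relations being reversal-stable up to the swap $i\leftrightarrow j$) then lets me pass freely between left and right versions.

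For (ii) I would argue by induction on $\ell(a)$ that if $\sigma_i\prec a$ and $\sigma_j\prec a$ with $i\neq j$, then $m_{i,j}<\infty$ and $\Delta_{i,j}\prec a$ (when $m_{i,j}=\infty$ the symbol $[\cdots]_{m_{i,j}}$ is undefined, and the argument below in fact shows no common left-multiple can then exist). Writing $a=a'\sigma_i$ and invoking $\sigma_j\prec a$, the crux is an exchange step: stripping the suffix $\sigma_i$ from $a$ and tracking how divisibility by $\sigma_j$ descends to $a'$, one produces a strictly shorter common left-multiple of $\sigma_i$ and $\sigma_j$ sitting inside $a'$, applies the induction hypothesis there, and then reassembles the full alternating word one letter at a time using cancellativity.

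The main obstacle is precisely this exchange step, namely controlling the interaction between ``$\sigma_j$ is a suffix of $a$'' and the removal of the suffix $\sigma_i$. This forces a case analysis according to whether the newly exposed letter prolongs the alternating word $[\cdots\sigma_j\sigma_i]$ or not; equivalently, it is the confluence of the two rewriting moves applicable at a given site. This case analysis is the technical heart of the Brieskorn--Saito argument. Once it is in place, cancellativity converts the length reduction into a clean induction and the theorem follows.
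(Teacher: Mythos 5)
The paper does not actually prove this statement: Theorem 8.3 is imported as a known result, with the proof delegated to the cited sources Brieskorn--Saito and Garside. So there is no internal argument to compare yours against, and the road map you sketch (cancellativity of $A^{+}$ plus the identification of the alternating word $\Delta_{i,j}$ as the least common left-multiple of $\sigma_i$ and $\sigma_j$) is precisely the strategy of those sources. Judged on its own terms, however, your write-up has two genuine gaps, and they sit exactly at the load-bearing points.

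First, your argument for cancellativity is invalid as stated. Homogeneity of the defining relations does not yield cancellativity by any soft induction on length: the monoid $\langle a,b \mid aba=baa \rangle$ is presented by a single length-preserving (indeed letter-multiplicity-preserving) relation, yet $aba=baa$ while $ab\neq ba$ (no relation can be applied to a word of length $2$), so right cancellation fails. Your appeal to the abelianisation is also off the mark: for odd $m_{i,j}$ the relation $\sigma_i\sigma_j\sigma_i\cdots=\sigma_j\sigma_i\sigma_j\cdots$ does not preserve the multiplicity of each generator, only the total length. Cancellativity of Artin monoids is itself a hard theorem, and its known proofs run through the very reduction/exchange machinery you are deferring. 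Second, your ingredient (ii) --- that $\Delta_{i,j}$ is the \emph{least} common left-multiple of $\sigma_i$ and $\sigma_j$ --- is not a lemma from which the theorem ``is immediate''; it is a verbatim restatement of the theorem (together with the easy observation that $\Delta_{i,j}$ is a common left-multiple). Its proof is exactly the exchange step that you yourself describe as ``the technical heart of the Brieskorn--Saito argument'' and never carry out. What you have written is therefore a correct road map of the argument in the cited references, in which both of the essential lemmas remain unproved.
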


Now we can prove the following lemma.

\begin{lem}
Suppose $G$ acts on a set $S$. Suppose a subset $\{v_1 ,\cdots , v_n
\} \subset S$ satisfy \begin{align*} &(1) If\ m_{i,j}=2k+1 ,\ then\
[s_i s_j \cdots ]_{2k} (v_i )=v_j ;
&(3)& If\  m_{i,j} =2 ,\  then\  s_i (v_j ) =v_j ; \\
&(2) If\  m_{i,j} =2k ,\  then\  [s_i s_j \cdots ]_{2k-1} (v_j )
=v_j ; &(4)& s_i (v_i ) = v_i .
\end{align*}
Then an identity $w s_i w^{-1 } = s_j $ in $G$ implies $w (v_i ) =
v_j $.

\end{lem}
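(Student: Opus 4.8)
The plan is to argue by induction on the length $l(w)$ of $w$ in the Coxeter group $G=G_M$, using the geometric representation to organise the induction while reading off the action on $S$ from the four hypotheses. Write $\alpha_i$ for the simple root of the reflection $s_i$, so that $w s_i w^{-1}=s_{w(\alpha_i)}$; thus $w s_i w^{-1}=s_j$ is equivalent to $w(\alpha_i)=\pm\alpha_j$. If $l(w)=0$ then $w=e$, $s_i=s_j$, hence $i=j$ and there is nothing to prove. If $w(\alpha_i)=-\alpha_j$, then $s_i$ is a right descent of $w$, so $w=w's_i$ with $l(w')<l(w)$ and $w's_iw'^{-1}=s_j$; by induction $w'(v_i)=v_j$, and then $w(v_i)=w'\bigl(s_i(v_i)\bigr)=w'(v_i)=v_j$ by hypothesis $(4)$. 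This disposes of every case except $w(\alpha_i)=+\alpha_j$ with $w\ne e$.

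For that remaining case I choose a right descent $s_b$ of $w$ (one exists since $w\ne e$, and necessarily $b\ne i$ because $s_i$ is not a right descent here). Set $J=\{i,b\}$, let $W_J=\langle s_i,s_b\rangle$ be the dihedral parabolic and $m=m_{i,b}$. Using the minimal-coset-representative decomposition I write $w=w_0c$ with $c\in W_J$, where $w_0$ is the minimal-length element of $wW_J$ (so $w_0(\alpha_i),w_0(\alpha_b)>0$ and $l(w)=l(w_0)+l(c)$); since $s_b$ is a right descent of $w$ lying in $W_J$, it is a right descent of $c$, so $c\ne e$ and $l(w_0)<l(w)$. The key geometric step is that, for such a minimal $w_0$, the conjugated reflection subgroup $w_0W_Jw_0^{-1}$ has simple system $\{w_0(\alpha_i),w_0(\alpha_b)\}$ and positive system $w_0(\Phi_J^+)$. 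Now $c(\alpha_i)$ is a root of $W_J$ and must be positive, for otherwise $s_i$ would be a right descent of $w$. Hence $\alpha_j=w(\alpha_i)=w_0\bigl(c(\alpha_i)\bigr)\in w_0(\Phi_J^+)$; being simple in $\Phi^+$ it is indecomposable there, so it is simple for the subgroup, giving $\alpha_j\in\{w_0(\alpha_i),w_0(\alpha_b)\}$.

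I then split into the two resulting cases. If $w_0(\alpha_b)=\alpha_j$, then $c(\alpha_i)=w_0^{-1}(\alpha_j)=\alpha_b$; as $\alpha_b$ lies in the $W_J$-orbit of $\alpha_i$ only when $m$ is odd, this forces $m=2k+1$, and (the stabiliser of $\alpha_i$ in $W_J$ being trivial) the unique element carrying $\alpha_i$ to $\alpha_b$ is $c=[s_is_b\cdots]_{2k}$. Applying induction to $w_0$, which conjugates the simple reflection $s_b$ to $s_j$, gives $w_0(v_b)=v_j$, while hypothesis $(1)$ gives $c(v_i)=[s_is_b\cdots]_{2k}(v_i)=v_b$; hence $w(v_i)=w_0(v_b)=v_j$. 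If instead $w_0(\alpha_i)=\alpha_j$, then $c(\alpha_i)=\alpha_i$, so $c$ is the nontrivial element of the stabiliser of $\alpha_i$ in $W_J$, which is nontrivial only for $m=2k$ even, where it is the reflection $c=[s_bs_i\cdots]_{2k-1}=w_Js_i$ fixing $\alpha_i$. Induction applied to $w_0$ (conjugating $s_i$ to $s_j$) gives $w_0(v_i)=v_j$, hypothesis $(2)$ with the roles of $i,b$ interchanged gives $c(v_i)=[s_bs_i\cdots]_{2k-1}(v_i)=v_i$, and again $w(v_i)=w_0(v_i)=v_j$; the subcase $m=2$ is exactly hypothesis $(3)$.

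The heart of the argument, and the step I expect to require the most care, is the geometric claim that $\alpha_j$ is a \emph{simple} root of the conjugated parabolic $w_0W_Jw_0^{-1}$ (via the theory of canonical simple systems of reflection subgroups), together with the dihedral bookkeeping that pins down $c$ from its action on $\alpha_i$; it is precisely the dichotomy $m$ odd versus $m$ even that routes the two halves of the argument into hypotheses $(1)$ and $(2)$. Everything else is the standard parabolic length combinatorics, and the reductions enter the $S$-action only through the identities $s_i(v_i)=v_i$, $c(v_i)=v_b$, or $c(v_i)=v_i$ supplied by $(1)$--$(4)$.
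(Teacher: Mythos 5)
Your proof is correct, and although it shares the paper's inductive skeleton, it justifies the crucial step by a genuinely different mechanism. Both arguments induct on $l(w)$, dispose of the case $l(ws_i)<l(w)$ exactly as you do via hypothesis $(4)$, and in the remaining case factor $w$ as a shorter element times an explicit alternating dihedral word of length $m-1$, finishing with hypotheses $(1)$--$(3)$ on that word and induction on the shorter factor. The difference is how the factorization is obtained: the paper works in the Artin monoid, lifting the two reduced expressions $s_{i_1}\cdots s_{i_k}s_i=s_js_{i_1}\cdots s_{i_k}$ of $ws_i$ to $A^{+}$ (its Theorem 8.2) and invoking the Brieskorn--Saito common right-divisor property (its Theorem 8.3) to extract $u=[\cdots s_is_{i_k}]_{m_{i_k,i}-1}$ as a right factor of $w$; you instead pick a right descent $s_b$, split $w=w_0c$ along the standard parabolic $W_{\{i,b\}}$, and use root geometry plus dihedral orbit/stabilizer bookkeeping to identify $c$ --- which is in fact the very same alternating word $u$ the paper extracts, since the last letter of a reduced word is a right descent. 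Your route buys two things: it stays entirely inside classical Coxeter/root-system theory, avoiding Artin monoids, and it makes explicit the odd/even dichotomy that the paper elides --- indeed the paper asserts $us_iu^{-1}=s_{i_k}$ uniformly, which is literally true only for $m_{i_k,i}$ odd (for even $m$ one has $us_iu^{-1}=s_i$ and $u(v_i)=v_i$ by $(2)$, and the induction closes the same way), whereas your two cases feed cleanly into hypothesis $(1)$ versus hypotheses $(2)$--$(3)$; your argument also handles $m_{i,b}=\infty$ by showing that configuration is vacuous. The only cost is the appeal to canonical simple systems of reflection subgroups, but in this conjugated-parabolic situation the needed claim admits a three-line direct check worth including: since $c(\alpha_i)\in\Phi_J^{+}$, write $\alpha_j=x\,w_0(\alpha_i)+y\,w_0(\alpha_b)$ with $x,y\geq 0$, expand the two positive roots $w_0(\alpha_i),w_0(\alpha_b)$ in simple coordinates and compare coefficients of each simple root; this forces $x=0$ or $y=0$ (otherwise both roots would be positive multiples of $\alpha_j$, hence equal, contradicting $\alpha_i\neq\alpha_b$), and a root that is a positive multiple of $\alpha_j$ equals $\alpha_j$.
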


\begin{proof}
We prove it by induction on $l(w)$. When $l(w) =0 $ it is evident.
Suppose the lemma is true when $l(w) < k $ and suppose we have an
identity $w s_i w^{-1} =s_j $ where $l(w) =k$.  If $l(w s_i ) = l(w)
-1$, let $w^{'} = ws_i $. Since $w^{'} s_i (w^{'} )^{-1}  =w s_i
w^{-1} = s_j $, by induction we have $w^{'} (v_i ) = v_j $. Which
implies $w(v_i ) =  w^{'} (v_i ) = v_j $ by (4). Now suppose $l(ws_i
) = l(w) +1 $. Let $s_{i_1 } \cdots s_{i_k }$ be a reduced form of
$w$. We have $s_{i_1 } \cdots s_{i_k } s_i = s_j s_{i_1 } \cdots
s_{i_k }$. Because both sides of the identity are reduced forms, by
Theorem 8.2 we have $\sigma _{i_1 } \cdots \sigma _{i_k } \sigma _i
= \sigma _j \sigma _{i_1 } \cdots \sigma _{i_k } =\tau (ws_i)$. From
the condition $l(ws_i ) = l(w) +1$ we know $i_k \neq i$, so by
Theorem 8.3 we have $[\cdots \sigma _{i_k } \sigma _i ] _{m_{i_k
,i}} \prec \tau (ws_i )$. So $\tau (ws_i ) = a [\cdots \sigma _{i_k
} \sigma _i ] _{m_{i_k ,i}} $ for some $a \in A^{+}$. Denote $\pi
(a) $ as $w^{'}$ ,and $\pi ([\cdots  \sigma _i \sigma _{i_k } ]
_{m_{i_k ,i} -1})$ as $u$. So $w= w^{'} u $. An argument of length
shows $l(w^{'}) = l(w) -l(u) $. There is $u s_i u^{-1} = s_{i_k }$,
and by (1), (2) , (3) we have $u(v_i ) = v_{i_k }$.  So $w^{'}
s_{i_k } (w^{'})^{-1} = w s_i w^{-1} = s_j $. By induction we have
$w^{'} (v_{i_k } ) = v_j $ which implies $w (v_i ) = w^{'} u (v_i )
= v_j $.

\end{proof}
\begin{thm} When $G_M$ is finite then $B_{G_M } (\Upsilon)\cong B_{G_M }^{'}
(\Upsilon)$.

\end{thm}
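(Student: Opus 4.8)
The plan is to construct two mutually inverse algebra morphisms $\phi\colon B_{G_M}(\Upsilon)\to B^{'}_{G_M}(\Upsilon)$ and $\psi\colon B^{'}_{G_M}(\Upsilon)\to B_{G_M}(\Upsilon)$, exactly as in the dihedral case of Theorem 8.1, to which all of the genuine rank-2 content will be reduced. Write $B=B_{G_M}(\Upsilon)$ and $B^{'}=B^{'}_{G_M}(\Upsilon)$. The direction $\psi$ is the easy one: send $S_i\mapsto s_i$ and $E_i\mapsto e_i$ and check that relations $1)$--$11)$ of Definition 8.2 follow from relations $(0)$--$(6)$ of Definition 1.1. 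Each of these relations involves only two indices $i,j$, so it lives inside the rank-2 parabolic subgroup $\langle s_i,s_j\rangle\cong G_{m_{i,j}}$, and every such check is literally the verification already carried out in the proof of Theorem 8.1 (relations $3),4)$ from $(1),(2)$; $5),6)$ from $(4)$ and $(1)^{'}$; $7),8),11)$ from $(1)^{'}$; and $9),10)$ from $(5),(6)$).

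The substance lies in the opposite direction. In $B^{'}$ only the generators $E_i$ attached to the simple reflections $s_i$ are present, whereas Definition 1.1 carries a generator $e_s$ for every reflection $s\in R\cong P$. So first I would extend the family $\{E_i\}$ to all reflections: for a reflection $s$ write $s=w s_k w^{-1}$ with $s_k$ simple and set $E_s:=j(w)E_k j(w)^{-1}$, where $j\colon\mathbb{C}G\hookrightarrow B^{'}$ is the embedding fixed before Theorem 8.1. The crucial point is that $E_s$ must be independent of the choice of $w$ and $k$, and this is exactly what Lemma 8.1 delivers: taking $S=B^{'}$ with the $G$-action by conjugation through $j$ and taking $v_i=E_i$, the four hypotheses of Lemma 8.1 become precisely relations $4),3)/5),7),11)$ of Definition 8.2 (namely $s_i(v_i)=v_i$; $s_i(v_j)=v_j$ when $m_{i,j}=2$; $[s_is_j\cdots]_{2k-1}(v_j)=v_j$ when $m_{i,j}=2k$; and $[s_is_j\cdots]_{2k}(v_i)=v_j$ when $m_{i,j}=2k+1$). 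Consequently $w s_k w^{-1}=u s_l u^{-1}$ forces $j(w)E_k j(w)^{-1}=j(u)E_l j(u)^{-1}$ by applying Lemma 8.1 to $u^{-1}w$, so $E_s$ is well defined.

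With $E_s$ in hand I would define $\phi$ by $T_w\mapsto j(w)$ for $w\in G$ and $e_s\mapsto E_s$ for $s\in R$, and verify that $\phi$ respects relations $(0)$--$(6)$ of Definition 1.1. Relation $(0)$ holds because $j$ is a homomorphism; relation $(3)$, $T_w e_j=e_i T_w$ when $w(H_j)=H_i$, is immediate from the conjugation definition of $E_s$ together with Lemma 8.1; relations $(1)$ and $(2)$ for a general reflection are obtained by conjugating relations $3)$ and $4)$. The remaining relations $(1)^{'},(4),(5),(6)$ each concern two hyperplanes $H_i,H_j$ meeting in a codimension-2 edge $L=H_i\cap H_j$; the reflections fixing $L$ form the rank-2 parabolic subgroup stabilizing $L$, which is dihedral, and the crossing/noncrossing dichotomy corresponds exactly to $m_{i,j}=2$ versus $m_{i,j}\geq 3$. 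Restricting to this parabolic and transporting back to the simple case by $j(w)$, each relation becomes an instance of a dihedral identity already established in Theorem 8.1 (relation $(4)$ from $6)$; relations $(5),(6)$ from $9),10)$ and $8)$; relation $(1)^{'}$ from $7),8),11)$).

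Finally, since $\phi$ and $\psi$ act as mutually inverse bijections on generators ($\phi\psi(S_i)=S_i$, $\phi\psi(E_i)=E_i$ and $\psi\phi(T_w)=T_w$, $\psi\phi(e_s)=e_s$), they are mutually inverse isomorphisms, giving $B_{G_M}(\Upsilon)\cong B^{'}_{G_M}(\Upsilon)$. I expect the real obstacle to be the systematic reduction of relations $(1)^{'},(5),(6)$ to the rank-2 parabolic subgroups: one must confirm that the data $R(i,j)$, the noncrossing condition, and the parameters $\mu_s$ of Definition 1.1 match the dihedral parameters $\mu,\tau_i$ inside each parabolic—this is precisely where Lemma 5.4, identifying $R(i,j)$ with the reflections fixing $H_i\cap H_j$, is needed—so that Theorem 8.1 applies verbatim.
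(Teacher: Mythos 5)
Your proposal is correct and follows essentially the same route as the paper: the paper likewise builds the easy morphism $B^{'}_{G_M}(\Upsilon)\to B_{G_M}(\Upsilon)$ by reducing the two-index relations (via Lemma 5.4 and Lemma 8.2) to the dihedral case of Theorem 8.1, and builds the reverse morphism by first extending $\{E_i\}$ to all reflections through the conjugation action, with well-definedness supplied exactly by Lemma 8.1, before checking the relations of Definition 1.1 on rank-2 parabolics. The two maps are then observed to be mutually inverse on generators, just as you conclude.
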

The proof is still by constructing  a morphism from $B_{G }^{\prime
 }
(\Upsilon)$ to $B_{G } (\Upsilon)$ and a morphism back, then show
they are the inverse of each other. The following lemma is
well-known Humphreys$\cite{Hu}$.
\begin{lem} Suppose $G$ is a finite Coxeter group. Then

$(1)$ For any $i,j$, if $w\in G$ fix $H_{s_i} \cap H_{s_j }$
point-wise, then $w$ lies in the subgroup generated by $s_i , s_j$.

$(2)$ For any two different $s, s^{'} \in R$, there are $w\in G,
1\leq i<j\leq n$ such that $w(H_s \cap H_{s^{'}}) = H_{s_i } \cap
H_{s_j }$. So $wsw^{-1} $
 and $ws^{'}w^{-1} $ lie in the subgroup generated by $s_i
 $ and $s_j$.
\end{lem}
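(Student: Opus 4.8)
The plan is to derive both parts from Steinberg's fixed-point theorem, already used in Section 2.2 in the form ``the pointwise stabilizer is generated by the reflections whose hyperplane contains the fixed set'', together with the standard chamber geometry of the geometric representation $\rho:G\to GL(V)$. Throughout I would use that the assignment $s\mapsto H_s$ is $G$-equivariant, i.e. $H_{wsw^{-1}}=w(H_s)$ for $w\in G$ and $s\in R$, so that conjugation of reflections corresponds to the $G$-action on hyperplanes. For a codimension $2$ edge $L$ I write $G_L=\{w\in G\mid w|_L=\mathrm{id}\}$ for its pointwise stabilizer.

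For part $(1)$, set $L=H_{s_i}\cap H_{s_j}$. Since $s_i,s_j$ are simple reflections, their roots $\alpha_i,\alpha_j$ are linearly independent, so $L$ has codimension $2$ and $L^{\perp}=\mathrm{span}(\alpha_i,\alpha_j)$. First I would note that $s_i$ and $s_j$ each fix $L$ pointwise, since $L\subseteq H_{s_i}$ and $L\subseteq H_{s_j}$; hence $\langle s_i,s_j\rangle\subseteq G_L$. For the reverse inclusion I would apply Steinberg's theorem to $G_L$: it is generated by those $s\in R$ with $H_s\supseteq L$, which is to say those whose root lies in $L^{\perp}=\mathrm{span}(\alpha_i,\alpha_j)$. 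Thus it suffices to show that every reflection whose root lies in $\mathrm{span}(\alpha_i,\alpha_j)$ already belongs to $\langle s_i,s_j\rangle$. This is the statement that the rank-two root subsystem $\Phi\cap\mathrm{span}(\alpha_i,\alpha_j)$ is exactly the root system of the standard parabolic subgroup $\langle s_i,s_j\rangle$, which gives $G_L=\langle s_i,s_j\rangle$ and hence $(1)$.

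For part $(2)$, let $L=H_s\cap H_{s'}$, a codimension $2$ edge since $s\neq s'$ forces $H_s\neq H_{s'}$. I would choose a point $x_0$ in the relative interior of $L$ that lies on no hyperplane other than those containing $L$; such $x_0$ exists because the finitely many hyperplanes not containing $L$ meet $L$ in proper subspaces. A small ball around $x_0$ then meets only hyperplanes through $L$, so its closure contains a codimension $2$ face $F\subseteq L$ of some chamber $C$. Using that $G$ acts simply transitively on chambers, pick $w\in G$ with $w(C)=C_0$, the fundamental chamber; then $w(F)$ is a codimension $2$ face of $C_0$, hence lies in $H_{s_i}\cap H_{s_j}$ for the two walls $H_{s_i},H_{s_j}$ of $C_0$ containing it, with $1\leq i<j\leq n$. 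Since the unique codimension $2$ edge containing a codimension $2$ face is its affine span, $w(L)$ is the edge through $w(F)$, giving $w(H_s\cap H_{s'})=w(L)=H_{s_i}\cap H_{s_j}$. For the final clause, $w(H_s)\supseteq w(L)=H_{s_i}\cap H_{s_j}$, and the reflection with hyperplane $w(H_s)$ is $wsw^{-1}$; by part $(1)$ it lies in $\langle s_i,s_j\rangle$, and likewise $ws'w^{-1}\in\langle s_i,s_j\rangle$.

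The main obstacle I expect is the crux of part $(1)$: showing that no reflection outside $\langle s_i,s_j\rangle$ has its hyperplane through $L$, equivalently $\Phi\cap\mathrm{span}(\alpha_i,\alpha_j)=\Phi_{\{i,j\}}$. The cleanest route is to cite the standard theory of parabolic subgroups of finite Coxeter groups (Humphreys), where this identification of root subsystems is proved. If one prefers a self-contained argument, I would instead project to the two-dimensional space $L^{\perp}$: the hyperplanes through $L$ become lines through the origin, $G_L$ acts faithfully as a finite dihedral reflection group, and the image of the chamber $C_0$ is an open convex sector meeting no such line, whose two bounding rays lie on the images of $H_{s_i}$ and $H_{s_j}$. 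These two lines are therefore adjacent in the rank-two arrangement, so the reflections across them generate the whole dihedral group $G_L$, which is exactly $\langle s_i,s_j\rangle$. Everything else in the argument is routine once this rank-two reduction is in place.
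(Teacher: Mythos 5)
Your proof is correct: the paper gives no argument for this lemma at all, simply citing it as well-known from Humphreys, and your sketch reconstructs exactly the standard proof behind that citation --- Steinberg's fixed-point theorem plus the identification $\Phi\cap\mathrm{span}(\alpha_i,\alpha_j)=\Phi_{\{i,j\}}$ of rank-two parabolic root subsystems for part $(1)$, and simple transitivity of $G$ on chambers together with the face structure of the fundamental chamber for part $(2)$. The one step you gloss over --- that the codimension-$2$ face $w(F)$ of $C_0$ lies on exactly two walls, both of them simple walls $H_{s_i},H_{s_j}$ --- is itself the standard fact (Humphreys, Theorem 1.12: every root vanishing at a point of $\overline{C_0}$ lies in the span of the simple roots vanishing there), so it is covered by the same reference you invoke, and your dihedral projection to $L^{\perp}$ gives a valid self-contained alternative for the crux of part $(1)$.
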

\begin{lem} Define a map $\phi$ by setting $\phi (S_i ) =s_i \in G $,$\phi (E_i ) = e_{s_i }$ for $1\leq i\leq
n$, then $\phi$ extends to a morphism from $B_{G }^{\prime }
(\Upsilon)$ to $B_{G } (\Upsilon)$.
\end{lem}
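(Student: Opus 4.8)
The plan is to invoke the universal property of the presentation in Definition 8.2: the map $\phi$ extends to an algebra homomorphism $B'_{G}(\Upsilon)\to B_{G}(\Upsilon)$ precisely when the images $\phi(S_i)=s_i\in\mathbb{C}G\subset B_G(\Upsilon)$ and $\phi(E_i)=e_{s_i}$ satisfy each of the defining relations 1)--11). So the whole argument is a relation-by-relation verification inside $B_G(\Upsilon)$. Relations 1) and 2) are the Coxeter relations, which hold among the $s_i$ in $\mathbb{C}G$, embedded in $B_G(\Upsilon)$ by Theorem 5.1. Relations 3) and 4) are verbatim relations $(1)$ and $(2)$ of Definition 1.1 (identifying $\tau_i$ with the $e_i^2$-coefficient $m_{s_i}$). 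This disposes of all one-index relations immediately.

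All remaining relations 5)--11) involve only two indices $i,j$, hence live inside the rank-2 parabolic $W_{i,j}=\langle s_i,s_j\rangle\cong I_2(m_{i,j})$ and among the $e_t$ whose hyperplanes pass through the codimension-2 edge $L=H_{s_i}\cap H_{s_j}$. First I would record the geometric dichotomy that controls everything: by Lemma 8.3(1) a hyperplane $H_k$ contains $L$ iff $s_k\in W_{i,j}$, so there are exactly $m_{i,j}$ such hyperplanes; thus $H_i\pitchfork H_j$ (i.e. $L$ is a crossing edge) iff $m_{i,j}=2$, and $L$ is noncrossing iff $m_{i,j}>2$. When $m_{i,j}=2$ this yields relations 5) and 6) at once: $s_i,s_j$ commute so $s_i(H_{s_j})=H_{s_j}$ and relation $(3)$ gives $s_i e_{s_j}=e_{s_j}s_i$, while $H_i\pitchfork H_j$ and relation $(4)$ give $e_{s_i}e_{s_j}=e_{s_j}e_{s_i}$. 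When $m_{i,j}>2$, $L$ is noncrossing and I would read off 7), 8), 11) from Definition 1.1: the odd word $[s_j s_i\cdots]_{2k-1}$ is the reflection of $W_{i,j}$ fixing $H_{s_i}$ whose axis meets $H_{s_i}$ in the noncrossing edge $L$, so relation 7) is an instance of $(1)'$; for $m_{i,j}=2k$ the reflections $s_i,s_j$ lie in distinct conjugacy classes of $W_{i,j}$, so by Lemma 5.4 $R(s_i,ws_jw^{-1})=\emptyset$ for every $w\in W_{i,j}$ and relation $(6)$ kills $e_{s_i}e_{w(s_j)}$, whence relation 8) follows after moving the middle word across $e_{s_j}$ by relation $(3)$; and for $m_{i,j}=2k+1$ one checks $[s_i s_j\cdots]_{2k}(H_{s_i})=H_{s_j}$, so relation 11) is an instance of $(3)$.

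The hard part will be relations 9) and 10), the ``$E_i\,[\cdots]_{2l-1}\,E_i$'' relations, where the parameter bookkeeping must be matched exactly. Writing $t=[s_j s_i\cdots]_{2l-1}$ (a reflection of $W_{i,j}$) and using $t\,e_{s_i}=e_{t(s_i)}\,t$ via relation $(3)$, the product $e_{s_i}\,t\,e_{s_i}$ reduces through relation $(5)$ to a multiple of $e_{s_i}$, and that multiple is $\sum_{s\in R(s_i,t(s_i))}\mu_s$. The crux is that $R(s_i,t(s_i))$ consists exactly of the reflections of $W_{i,j}$ carrying $H_{t(s_i)}$ to $H_{s_i}$ (Lemma 5.4), and a parity count inside $I_2(m_{i,j})$ shows this set has two elements when $m_{i,j}=2k$ (producing the coefficient $\mu_s+\mu_{s'}$ of relation 9)) and one element when $m_{i,j}=2k+1$ (producing $\mu_{s_\epsilon}$ of relation 10), with $\epsilon$ governed by the parity of $l$). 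This is precisely the computation already carried out for dihedral groups in the proof of Theorem 8.1, so in effect I would reduce 9) and 10) to that computation transplanted into the parabolic $W_{i,j}\subset G$; the only genuine work is verifying that the relations of Definition 1.1, restricted to the hyperplanes through $L$, coincide with the dihedral relations, which is exactly what Lemma 8.3 and Lemma 5.4 guarantee.
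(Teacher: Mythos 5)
Your proposal is correct and follows essentially the same route as the paper's own proof: a relation-by-relation verification in which the two-index relations are reduced, via Lemma 5.4 and the parabolic-subgroup lemma (Lemma 8.2 in the paper, which you cite as ``Lemma 8.3''), to the dihedral computation already done in Theorem 8.1. The paper simply declares relations 1)--8) and 11) ``easy to see'' and applies the dihedral reduction only to 9) and 10); you carry out the same reduction uniformly and fill in the crossing/noncrossing dichotomy details, which is a refinement of the paper's argument rather than a different one.
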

\begin{proof}We need to certify that $\phi $ preserves  all relations of in
Definition 8.2. The facts $\phi$ preserves $1)\sim  8)$ and $11)$
are easy to see.
 Notice in $9) $ and $ 10)$ only two indices $i,j $
are involved.  So we can use lemma 5.4 and  lemma 8.2 to reduce
these cases to dihedral cases, which are proved in Theorem 8.1.
\end{proof}
It isn't hard to see that the morphism $J$ from $\mathbb{C} G $ to
$B_{G }^{\prime } (\Upsilon)$ by sending $s_i $ to $S_i $ is
injective. So for $w\in G$ we can identify $J(w)$ with $w $ for
convenience. Denote the imbedding image of $R$ in $B_{G }^{\prime }
(\Upsilon)$ as $R^{\prime }$. Let $E^{\prime } = \{wE_i w^{-1}
\}_{w\in G, 1\leq i\leq n }$, $E =\{e_s \}_{s\in R}$. By definition
of $B_{G }^{\prime } (\Upsilon)$, the conjugating action of $G$ on
$E^{\prime }$ satisfies conditions in Lemma 8.1. So the map $e_i
\mapsto E_i $ $(1\leq i\leq n)$ extends uniquely to a
$G$-equivariant surjective map  $\varphi :E \rightarrow E^{\prime}
$. Define a map $\psi : E\cup R \rightarrow E^{\prime }\cup
R^{\prime } $ by $\psi (e_s ) = \varphi (e_s )$, $\psi (s)=s$. We
have the following lemma.

\begin{lem}
The map $\psi $ extends to a morphism from $B_{G } (\Upsilon)$ to
$B_{G }^{\prime } (\Upsilon)$. Still denote it as $\psi$.
\end{lem}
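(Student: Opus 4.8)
The plan is to use the presentation of $B_G(\Upsilon)$ recorded in Definition 1.1: it is the quotient of the free algebra on $\{T_w\}_{w\in G}\cup\{e_s\}_{s\in R}$ (recall $P\leftrightarrow R$ since $G$ is real, so I write $e_s$ for $e_i$) by relations $(0)$--$(6)$. I set $\psi(T_w)=J(w)$ and $\psi(e_s)=\varphi(e_s)$; on $E\cup R$ this is the map already given, and extended to all generators it descends to an algebra morphism precisely when the images of the generators satisfy relations $(0)$--$(6)$ in $B'_G(\Upsilon)$. So the entire proof reduces to checking these seven families of relations, using only the two structural facts already in hand: that $J$ is a morphism of $\mathbb{C}G$, and that $\varphi\colon E\to E'$ is $G$-equivariant and surjective by Lemma 8.1.

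The ``global'' relations are almost immediate. Relation $(0)$ is exactly the statement that $J$ is a morphism. Relation $(3)$ is literally $G$-equivariance of $\varphi$: if $w(H_j)=H_i$ then $ws_jw^{-1}=s_i$, whence $J(w)\varphi(e_{s_j})=\varphi(e_{s_i})J(w)$. For $(1)$ and $(2)$, every reflection $s$ is $G$-conjugate to a simple one $s_a$, say $s=ws_aw^{-1}$; then $\varphi(e_s)=J(w)E_aJ(w)^{-1}$, so $(1)$ and $(2)$ follow by conjugating relations $3)$ and $4)$ of Definition 8.2 (using that the idempotent parameter $\tau$ is constant on conjugacy classes, so $\tau_s=\tau_a$).

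The remaining relations $(1)'$, $(4)$, $(5)$, $(6)$ each involve only two hyperplanes ($H_i,H_j$, resp. $H_i$ and $V_w$), so I reduce them to rank-two parabolic (dihedral) subgroups. For $(4)$, $(5)$, $(6)$: given $H_i,H_j$, Lemma 8.2$(2)$ provides $w\in G$ and simple $s_a,s_b$ with $ws_iw^{-1},ws_jw^{-1}\in\langle s_a,s_b\rangle$, and by the already-verified relation $(3)$ I may conjugate and assume $s_i,s_j\in\langle s_a,s_b\rangle$. Now the subalgebra $\langle S_a,S_b,E_a,E_b\rangle\subset B'_G(\Upsilon)$ satisfies the relations of Definition 8.2 indexed by $\{a,b\}$, which are exactly the defining relations of $B'_{G_{m_{a,b}}}(\Upsilon)$; since $B'_{G_{m_{a,b}}}(\Upsilon)\cong B_{G_{m_{a,b}}}(\Upsilon)$ by Theorem 8.1, and relations $(4)$, $(5)$, $(6)$ of Definition 1.1 hold by construction inside $B_{G_{m_{a,b}}}(\Upsilon)$, they transport to this subalgebra, and conjugating back by $w$ yields the relations for $H_i,H_j$. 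Here Lemma 5.4 identifies $R(i,j)$ with the reflections of the dihedral parabolic fixing the edge $H_i\cap H_j$, so that $R(i,j)$ is empty, a singleton, or a pair exactly according to the odd/even dichotomy.

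The delicate point, and what I expect to be the main obstacle, is relation $(1)'$ together with the coefficient bookkeeping in $(5)$. For $(1)'$, observe first that $w(H_i)=H_i$ is equivalent to $ws_iw^{-1}=s_i$, i.e. $w$ centralizes $s_i$; the hypothesis that $H_i\cap V_w$ is noncrossing is what should let me confine the relevant part of $w$ to a rank-two parabolic in which it acts as the reflection perpendicular to $H_i$, so that $(1)'$ becomes relation $7)$ of Definition 8.2. Making this confinement rigorous for a general centralizing $w$, rather than a single perpendicular reflection, is the step that requires genuine care. In $(5)$ the match must be exact: $\sum_{s\in R(i,j)}\mu_s$ has to coincide with the coefficient in relations $9)$/$10)$ of Definition 8.2 (namely $\mu_s+\mu_{s'}$ in the even case and $\mu_{s_\epsilon}$ in the odd case), which is precisely what the description of $R(i,j)$ in Lemma 5.4 together with the conjugacy constraints on $\Upsilon$ guarantees. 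Once these two verifications are secured, all relations of Definition 1.1 are preserved and $\psi$ extends to the desired morphism.
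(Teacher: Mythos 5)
Your proposal follows the paper's proof almost step for step: relations $(0)$--$(3)$ from the structural properties of $J$ and $\varphi$, and relations $(4)$--$(6)$ by conjugating into a rank-two parabolic via Lemma 8.2$(2)$, using Lemma 5.4 to keep the reflections occurring in $(5)$ inside that parabolic, and then quoting the dihedral case settled in Theorem 8.1. The one place where you stop short is relation $(1)'$, and that is a genuine gap: you say that confining a general element $w$ with $w(H_i)=H_i$ to a rank-two parabolic ``requires genuine care'' and you do not carry it out, so as written you have not verified all defining relations, and the extension of $\psi$ is not established. Note also that working with the centralizer of $s_i$ is the wrong starting point: the hypothesis of $(1)'$ is not merely that $w$ centralizes $s_i$, and it is the second hypothesis, which you treat as a side condition, that does all the work.

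The missing observation is one line, after which $(1)'$ falls to exactly the same reduction you already use for $(4)$--$(6)$. The hypothesis of $(1)'$ says that $L=H_i\cap V_w$ is a (codimension two) noncrossing edge; since $L\subseteq V_w$, the element $w$ fixes $L$ pointwise. Now Lemma 8.2$(2)$ gives $u\in G$ and simple reflections $s_a,s_b$ with $u(L)=H_{s_a}\cap H_{s_b}$, and Lemma 8.2$(1)$ then places $uwu^{-1}$ in the parabolic $\langle s_a,s_b\rangle$; the reflection $us_iu^{-1}$ lies there as well, because its hyperplane contains $u(L)$. Moreover, by Steinberg's theorem together with Lemma 8.2$(1)$, the hyperplanes through $u(L)$ are precisely those of the reflections in $\langle s_a,s_b\rangle$, so the noncrossing hypothesis forces $m_{a,b}>2$ (and in fact $m_{a,b}$ even, since for odd dihedral groups the setwise stabilizer of a reflection line is $\{1,s_i\}$ and $(1)'$ adds nothing to $(1)$). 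Hence the whole relation lives in the subalgebra of $B'_{G}(\Upsilon)$ generated by $S_a,S_b,E_a,E_b$, which is a quotient of $B'_{G_{m_{a,b}}}(\Upsilon)\cong B_{G_{m_{a,b}}}(\Upsilon)$ by Theorem 8.1; since $(1)'$ is a defining relation of the latter (concretely it amounts to relations $7)$, $8)$ of Definition 8.1, i.e.\ relation $7)$ of Definition 8.2), it holds in the image, and conjugating back by $u^{-1}$ finishes the check. This uniform reduction is what the paper's (typo-marred) sentence about $(1)'$ is pointing at; alternatively, the classification you were groping for --- such a $w$ is necessarily $s_j$ or $s_is_j$ for a reflection $s_j$ commuting with $s_i$ whose hyperplane contains $L$ --- is also true and would close the gap, but it requires a separate linear-algebra argument that the pointwise-stabilizer route makes unnecessary.
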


\begin{proof}
We need to show that $\psi$ satisfies all relations in Definition
5.1. The cases for relation (0),(1) and (2) is evident. The case for
relation $(1)^{'}$ follows from relation $(4)$ of Definition 8.3.
Case of relation (3) is by definition of $\varphi$. For relations
(3) to (6), we can reduce these cases by (2) of lemma 8.2 and lemma
5.4 to cases of dihedral groups, which are proved in Theorem 8.1.

\end{proof}

By definition $\psi$ and $\phi$ are apparently the inverse of each
other, so Theorem 8.4 is proved.

\subsection{The Cyclotomic $G(m,1,n)$ cases}

Let $G$ be the cyclotomic pseudo reflection group of type
$G(m,1,n)$. As in BMR\cite{BMR}, let $V $ be a $n$-dimensional
complex linear space with a positive definite Hermitian metric $<,>$
, let $\{v_1 ,\cdots ,v_n \}$ be a orthonormal base. Then $G$ can be
imbedded in $U(V)$. It's image consists of monomial matrices whose
nonzero entries are $m$-th roots of unit. Here we give a concise
description of some facts of $G$ without proof.  Suppose $(z_1 ,
\cdots ,z_n )$ is the coordinate system corresponding to $\{v_1
,\cdots ,v_n \}$. Let $\xi = \exp ( \frac {2\pi \sqrt{-1}}{m})$. For
$i\neq j $, $0\leq a\leq m-1 $, define $H_{i,j;a} = \ker (z_i - \xi
^a z_j )= (v_i -\xi ^a v_j )^{\perp }$. Define $H_i =\ker (z_i )
=(v_i ) ^{\perp }$. Then $H_{i,j;a} =H_{j,i;-a}$.

Let $s _{i,j;a }\in U(V)$ be the unique reflection fixing every
points in $H_{i,j;a} $. Let $s_i $ be the pseudo reflection defined
by: $s_i (v_i )=\xi v_i $; $s_i (v_j ) =v_j$ for $j\neq i$. Then the
set $\mathcal {A}$ of reflection hyperplanes of $G$ is $\{ H _{i,j;a
} \} _{i<j ; 0\leq a\leq m-1 } \cup \{ H_i \} _{1\leq i\leq n }$.
The set of pseudo reflections $R$ of $G$ is
$$\{ s _{i,j ;a } \} _{i<j ;0\leq a\leq m-1 } \amalg (\amalg _{k=1} ^{m-1} \{ s^k _i \}_{1\leq i\leq n } ).$$

Above notation gives a decomposition of $R$ into conjugacy classes.
Now we have a look at the algebra $B_G (\Upsilon) $. The data
$\Upsilon $ essentially consists of $\mu , \mu _1 , \cdots , \mu
_{m-1} , \tau _0 ,\tau _1$. Where $\mu _{s_{i,j;a }} =\mu $, $\mu
_{s^k _i } = \mu _k $; $\tau _{H _{i,j;a }} = \tau _1 $, $\tau _{H_i
} =\tau _0 $. As in the real case, we define the following algebra
$B^{'} _G (\Upsilon )$ with canonical presentation.
\begin{defi}
The algebra $B^{'} _G (\Upsilon )$ is generated by  $S_0 , S_1
,\cdots ,S_{n-1 } $, $E_0 , E_1 \cdots , E_{n-1}$ with the following
relations. Where in $15)$ $W$ is any word composed from $S_0 , S_1$.
\begin{align*} & 1)S_0 ^m = S_i ^2 =id ( 1\leq i\leq n-1 ) ; &2) &
S_0 S_1 S_0 S_1
=S_1 S_0 S_1 S_0 ; \\
&3) S_i S_j =S_j S_i  ( \mid i-j \mid \geq 2);  & 4) & S_i S_{i+1}
S_i =
S_{i+1} S_i S_{i+1}  ( 1\leq i\leq n-2 ); \\
&5) E_0 ^2 = \tau _0 E_0 ; &6)& E_i ^2 = \tau _1 E_i  ( 1\leq i\leq n-1 );  \\
&7) S_1 S_0  S_1 E_0 =E_0 S_1  S_0  S_1 = E_0 ;  &8)& S_i S_{i+1} E_i = E_{i+1} S_i S_{i+1} (1\leq i\leq n-2) \\
&9) S_i E_j = E_j S_i  (\mid i-j \mid \geq 2 );  &10)&  (S_0 )^i S_1
(S_0 )^i (E_1 )=E_1 (S_0 )^i S_1 (S_0 )^i
(0\leq i\leq m-1);  \\
&11) S_i E_i = E_i = E_i S_i  (0\leq i\leq n-1 );  &12)& E_1 S_0 ^i
E_1 = \mu _i E_1 (0\leq
i\leq m-1)  \\
&13) E_0 S_1
E_0 = (m-1)\mu E_0 ;  &14)& E_i E_j =E_j E_i (|i-j|\geq 2);\\
&15) E_0 W E_1 = E_1 W E_0 =0 ; &16)&  E_i E_{i+1} =
\mu S_i S_{i+1} S_i E_{i+1} (1\leq i\leq n-2). \\
\end{align*}
\end{defi}
\begin{thm}
The algebra $B^{'} _G (\Upsilon ) $ is isomorphic to $B_G (\Upsilon
) $.

\end{thm}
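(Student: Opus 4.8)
The plan is to follow the architecture of the proof of Theorem 8.4: construct a morphism $\phi : B^{'}_G (\Upsilon) \to B_G (\Upsilon)$ and a morphism $\psi : B_G (\Upsilon) \to B^{'}_G (\Upsilon)$ and show they are mutually inverse. First I would fix the dictionary between generators using the explicit description of $\mathcal{A}$ and $R$ given above: send $S_0$ to the order $m$ pseudo reflection $s_1$ (with hyperplane $H_1 = \ker z_1$), $S_i$ to $s_{i,i+1;0}$ for $1\le i\le n-1$, and correspondingly $E_0 \mapsto e_{H_1}$, $E_i \mapsto e_{H_{i,i+1;0}}$. By Theorem 5.1 the group $G$ sits inside $B_G(\Upsilon)$, so the purely group-theoretic relations $1)$--$4)$ and $7)$--$11)$ of Definition 8.3 hold in $B_G(\Upsilon)$ as soon as one checks they hold in $G(m,1,n)$, which is standard. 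Relations $5), 6)$ are instances of relation $(2)$ of Definition 1.1, and relation $15)$ is relation $(6)$. The substantive relations are $12), 13)$ and $16)$, which involve products of two $E$'s or an expression $E\,S\,E$; for these I would apply relation $(5)$ of Definition 1.1 together with Marin's Lemma 5.4 to compute the relevant sets $R(i,j)$ explicitly, reducing each identity to a calculation inside a rank two parabolic subgroup.

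For the reverse morphism $\psi$ the obstruction is that $B^{'}_G(\Upsilon)$ carries only the generators $E_0,\dots,E_{n-1}$, whereas $B_G(\Upsilon)$ has one generator $e_H$ for every hyperplane $H\in\mathcal{A}$. I therefore first need an analogue of Lemma 8.1: for $w\in G$ with $w(H_i)=H_j$, the element $wE_iw^{-1}\in B^{'}_G(\Upsilon)$ depends only on the target hyperplane and not on the choice of $w$. Granting this, $\psi$ is defined on generators by $s\mapsto s$ and $e_H\mapsto wE_iw^{-1}$ for any $w$ carrying a standard hyperplane $H_i$ to $H$, and the assignment $e_{H_i}\mapsto E_i$ extends $G$-equivariantly to all of $E=\{e_H\}_{H\in\mathcal{A}}$. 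Verifying that $\psi$ respects the relations of Definition 1.1 again reduces, via Steinberg's theorem and the analogue of Humphreys' Lemma 8.2, to rank two parabolic subgroups: two hyperplanes meeting in a codimension two edge are conjugate either to $\{H_{i,i+1;0},H_{i+1,i+2;0}\}$ (a type $A_2$, i.e. $S_3$, situation), to $\{H_1,H_{1,2;a}\}$ (a $G(m,1,2)$ situation), or to a commuting pair, and in each case the required relation is an instance of the presentation of $B^{'}_G(\Upsilon)$ restricted to two indices.

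The hard part will be precisely the two ingredients that have no counterpart in the Coxeter case. Because $G(m,1,n)$ is not a Coxeter group, the Artin monoid machinery (Theorems 8.2 and 8.3) that drove the well-definedness Lemma 8.1 is unavailable, so the analogue of Lemma 8.1 must be established by a direct argument using the monomial matrix realization of $G(m,1,n)$ and its complex braid relations; this is the step I expect to demand the most care. In parallel, the base case of the rank two reduction is no longer purely dihedral: besides the $S_3=I_2(3)$ case already supplied by Theorem 8.1, one must verify the isomorphism directly for $G(m,1,2)$, whose braid relation $S_0S_1S_0S_1=S_1S_0S_1S_0$ and cyclotomic relations $E_1S_0^iE_1=\mu_iE_1$ and $E_0S_1E_0=(m-1)\mu E_0$ have no real analogue. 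Once well-definedness of the conjugated $E$'s and the $G(m,1,2)$ base case are in hand, the remaining relation checks for $\phi$ and $\psi$ are routine, and since $\phi$ and $\psi$ are visibly inverse to each other on generators, the proof is complete.
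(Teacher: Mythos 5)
Your proposal follows the paper's proof essentially verbatim: the paper defines $\Phi$ on generators by exactly your dictionary ($S_0\mapsto s_1$, $S_i\mapsto s_{i,i+1;0}$, $E_0\mapsto e_1$, $E_i\mapsto e_{i,i+1;0}$) and constructs the reverse morphism $\Psi$ by first establishing a well-definedness statement for the conjugates $wE_iw^{-1}$ (its Lemma 8.5), which is proved -- exactly as you anticipate -- by a direct computation in the monomial realization of $G(m,1,n)$ rather than by the Artin-monoid machinery behind Lemma 8.1. The remaining relation checks are likewise reduced to rank-two situations settled as in the dihedral/real case, so your plan and the paper's argument coincide in all essentials.
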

\begin{proof}The strategy of proof of this theorem is the same as for the real
case. We construct a morphism $\Phi $ from $B^{'} _G (\Upsilon ) $
to $B_G (\Upsilon ) $ and a morphism $\Psi $ in reverse direction.
  The
morphism $\Phi $ is constructed by setting $ \Phi (S_0 ) = s_1 ;
\Phi (S_i ) = s_{i ,i+1 ;0 }\  for\  i\geq 1   ; \Phi (E_0 ) = e_1 ;
\Phi (E_i )= e_{i,i+1 ,0}
 \ for\ i\geq 1.$
It isn't hard to certify that $\Phi $ satisfying all relations in
Definition 8.3, so $\Phi $ can extend to a morphism. To define $\Psi
$, the main step is still the definition of $\Psi (e_i ) $ and $\Psi
(e_{i,j;a })$. The following lemma shows there are well defined
elements $F_i $'s and $F_{i,j;a }$'s such that if we set $\Psi (w )
= w  , \Psi (e_i ) = F_i , \Psi (e_{i,j;a}) = F_{i,j;a } ,$ Then
$\Psi $ can extend to a morphism from $B_G (\Upsilon )$ to $B^{'} _G
(\Upsilon )$ by certifying that it all relations in Definition 1.1.
The proof is almost the same as in proof of Theorem 8.5 so we skip
it.

\end{proof}

\textbf{Construction of $F_i $ and $F_{i,j;a }$} \ \ \ \ The
following lemma is similar to Lemma 8.1.

\begin{lem}
Let $G$ be the pseudo reflection group of type $G(m,1,n)$. Suppose
$G$ acts on a set $S$ and suppose there is a subset  $\{v_0 ,v_1
,\cdots , v_{n-1} \} \subset S$ such that: \begin{align*} &(1) ( S_0
)^i S_1 ( S_0 )^i  (v_1 ) =v_1  \ for\  any\  i;   &(2)& S_1 ( S_0
)^i S_1 (v_0 )
=v_0 \ for\  any\  i,\\
 &(3) S_i (v_j ) = v_j \  if\  \mid i-j \mid \geq 2 ,
&(4)& S_i S_{i+1} (v_i ) = v_{i+1 }  \ for\  i\geq 1,\\
 &(5) S_{i} S_{i-1}(v_i ) = v_{i-1 } \ for\  i\geq 2 , &(6)& S_i (v_i ) =v_i .
\end{align*}
Then the identity $w ( \mathbb{H} _i ) = \mathbb{H} _j $ implies $w
(v_i ) =v_j $, where $w\in G$, and $S_i $'s are generators of $G$ as
in Proposition 8.1. For convenience  here we denote $H_1 $ as
$\mathbb{H} _0 $, $H_{i,i+1 ;0 }$ as $\mathbb{H} _i $ for $i\geq 2$.
\end{lem}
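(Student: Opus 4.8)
The plan is to imitate the proof of Lemma 8.1 as closely as possible, feeding in the complex-braid-group replacements for the Coxeter-theoretic facts (Theorems 8.2 and 8.3) that are valid for the type $B_n$ arrangement attached to $G(m,1,n)$. The first step is to convert the hypothesis $w(\mathbb{H}_i)=\mathbb{H}_j$ into a conjugation statement about the distinguished generators. Each $\mathbb{H}_i$ is the reflecting hyperplane of $S_i$, and $S_i$ is the generator of the cyclic pointwise stabilizer $G_{\mathbb{H}_i}$ singled out by its exceptional eigenvalue ($\xi$ when $i=0$ and $-1$ when $i\geq 1$). Since conjugation by $w$ carries $G_{\mathbb{H}_i}$ onto $G_{w(\mathbb{H}_i)}=G_{\mathbb{H}_j}$ and preserves exceptional eigenvalues, $w(\mathbb{H}_i)=\mathbb{H}_j$ forces $w S_i w^{-1}=S_j$; in particular $i$ and $j$ must lie in the same $G$-orbit of hyperplanes (both $0$, or both $\geq 1$). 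This puts the claim in exactly the form of Lemma 8.1, with the point $v_i$ attached to the generator $S_i$.

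Next I would induct on the length $l(w)$ of $w$ in $G(m,1,n)$ with respect to $\{S_0,\dots,S_{n-1}\}$. The base case $l(w)=0$ gives $S_i=S_j$, hence $i=j$ and $w(v_i)=v_i=v_j$. For the inductive step I compare $l(wS_i)$ with $l(w)$. If the length drops, put $w'=wS_i$; a short computation gives $w'S_i(w')^{-1}=wS_iw^{-1}=S_j$ with $l(w')<l(w)$, so by induction $w'(v_i)=v_j$, and then $w(v_i)=w'S_i^{-1}(v_i)=w'(v_i)=v_j$ using $S_i(v_i)=v_i$ (condition $(6)$). If the length grows, take a reduced expression $S_{i_1}\cdots S_{i_k}$ for $w$, lift the reduced identity $S_{i_1}\cdots S_{i_k}S_i=S_jS_{i_1}\cdots S_{i_k}$ into the positive monoid of the type $B_n$ Artin group, and apply the divisibility property (the analog of Theorem 8.3) to peel off a rank-two factor $u\in\langle S_{i_k},S_i\rangle$ with $uS_iu^{-1}=S_{i_k}$. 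Writing $w=w'u$ with $l(w')<l(w)$, induction yields $w'(v_{i_k})=v_j$, so it only remains to verify $u(v_i)=v_{i_k}$ inside the rank-two parabolic.

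The rank-two parabolics of $G(m,1,n)$ occur in three flavours, and each is covered by one block of hypotheses. The commuting case $|i-i_k|\geq 2$ is handled by condition $(3)$; the type $A_2$ case of two adjacent genuine reflections ($i,i_k\geq 1$), where $u=S_iS_{i+1}$ realizes $uS_iu^{-1}=S_{i_k}$ and sends $v_i$ to $v_{i_k}$, is handled by conditions $(4)$ and $(5)$; and the type $B_2$ case $\langle S_0,S_1\rangle\cong G(m,1,2)$ is handled by conditions $(1)$ and $(2)$, since the alternating words $(S_0)^aS_1(S_0)^a$ and $S_1(S_0)^aS_1$ are precisely the group elements effecting the relevant conjugation there. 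Checking $u(v_i)=v_{i_k}$ in each flavour then closes the induction exactly as in Lemma 8.1.

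The main obstacle is the order-$m$ generator $S_0$, i.e. the $B_2$ flavour. Because $G(m,1,n)$ is not a Coxeter group, Matsumoto's theorem and the Artin-monoid facts quoted as Theorems 8.2 and 8.3 are not available verbatim: one must first know that the chosen length function satisfies $l(wS_i)\neq l(w)$ for every generator, and that reduced expressions lift coherently to the positive monoid, which requires the type $B_n$ complex-braid-group substitutes from Brou\'e--Malle--Rouquier \cite{BMR} together with care about how powers of $S_0$ appear. The delicate point is to confirm that the peeled-off factor $u$ genuinely lands in $\langle S_0,S_1\rangle$ and that the needed divisibility holds in the $B_n$ monoid, where $S_0$ plays the r\^ole of the long generator. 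Once the $G(m,1,2)$ case is disposed of through conditions $(1)$ and $(2)$, the remaining bookkeeping is routine and identical to the real case treated in Lemma 8.1.
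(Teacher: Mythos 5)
Your opening reduction (converting $w(\mathbb{H}_i)=\mathbb{H}_j$ into $wS_iw^{-1}=S_j$, with $i,j$ in the same orbit) is fine, but the inductive engine you then import from Lemma 8.1 fails at exactly the point you flag and defer, and it is not a bookkeeping issue about powers of $S_0$: the Matsumoto-type lifting and the divisibility step have \emph{no} valid substitute over $G(m,1,n)$ for $m\geq 3$. Concretely, take $G(3,1,2)$ and $w=S_0^2S_1S_0^2$, which is reduced of length $5$ (its unique minimal positive word). As a matrix $w$ is antidiagonal with both entries $\xi^2$, so it commutes with $S_1$; hence $wS_1w^{-1}=S_1$, $w(\mathbb{H}_1)=\mathbb{H}_1$, and $l(wS_1)=6=l(w)+1$, so your ``length grows'' case applies. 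The identity $wS_1=S_1w$ gives two reduced expressions of the scalar $\mathrm{diag}(\xi^2,\xi^2)$, namely $S_0^2S_1S_0^2S_1$ and $S_1S_0^2S_1S_0^2$. Their lifts $\sigma_0^2\sigma_1\sigma_0^2\sigma_1$ and $\sigma_1\sigma_0^2\sigma_1\sigma_0^2$ are \emph{distinct} elements of the type $B_2$ Artin monoid: neither word contains $\sigma_0\sigma_1\sigma_0\sigma_1$ or $\sigma_1\sigma_0\sigma_1\sigma_0$ as a contiguous subword, so each is the unique positive word for its element; consequently the first is right-divisible by $\sigma_1$ but not by $\sigma_0$, the second by $\sigma_0$ but not by $\sigma_1$, and neither is right-divisible by the lcm $\sigma_0\sigma_1\sigma_0\sigma_1$. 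There is also no length-additive factorization of $\mathrm{diag}(\xi^2,\xi^2)$ through the image $(S_0S_1)^2=\mathrm{diag}(\xi,\xi)$ of that lcm, since $\mathrm{diag}(\xi,\xi)$ has length $4$ and $4+4\neq 6$; so no re-choice of reduced expression rescues the peeling. The root cause is that the relation $S_0^m=1$ has no counterpart in the monoid, so reduced expressions in $G(m,1,n)$ do not lift coherently. There is a second structural mismatch as well: your peeled factor is supposed to satisfy $uS_iu^{-1}=S_{i_k}$, but in the $B_2$ flavour $S_0$ and $S_1$ are not conjugate (different orders, hyperplanes in different $G$-orbits), so no such $u$ exists; what conditions $(1)$--$(2)$ actually control are \emph{stabilizing} elements such as $S_0S_1S_0$, which fix $v_1$ rather than move it. (The lemma itself is of course true in this example: $wS_1=(S_0S_1S_0)^2$ fixes $v_1$ by hypothesis $(1)$ and $(6)$ --- but discovering and organizing such identities is precisely what your scheme was supposed to avoid.)

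This is exactly why the paper's proof opens with ``instead of using Artin monoid we prove it by direct computation.'' The paper assigns to \emph{every} hyperplane of $G(m,1,n)$ an explicit point of $S$: $\bar{v}_i=S_{i-1}\cdots S_1(v_0)$ for the coordinate hyperplanes $H_i$, and $\bar{v}^{\,a}_{i,j}=(S_{j-1}\cdots S_1S_0S_1\cdots S_{j-1})^aS_{j-1}\cdots S_{i+1}(v_i)$ for the hyperplanes $H_{i,j;a}$, and then verifies by the direct computations (a)--(i) that each generator $S_l$ permutes these points exactly as it permutes the corresponding hyperplanes. This exhibits a $G$-equivariant map from the arrangement $\mathcal{A}$ to $S$ sending $\mathbb{H}_i$ to $v_i$, from which the conclusion is immediate. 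To salvage your route you would need to prove correct $G(m,1,n)$-substitutes for Theorems 8.2--8.3, which the example above shows are false in their naive form; restricting to expressions where lifting behaves well essentially amounts to redoing the paper's hyperplane-by-hyperplane computation.
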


\begin{proof}
In this case instead of using Artin monoid we prove it by direct
computation. Let $\bar{v} _i = S_{i-1} \cdots S_1 (v_0 )$ for $1\leq
i\leq n $;  $\bar{v}^a _{i,j} = (S_{j-1} \cdots S_1 S_0 S_1 \cdots
S_{j-1} )^a S_{j-1} \cdots S_{i+1} (v_i )$ for $i<j-1 $; $ \bar{v}
^a _{i,i+1} = (S_i \cdots S_0 \cdots S_i ) ^a (v_i )$. The following
identities show that the set $\{ \bar{v} _i \}_{1\leq i\leq n} \cup
\{ \bar{v} ^a _{i,j } \}_{i<j }$ is closed under the action of $G$,
and the map $J: \mathcal {A} \rightarrow \{ \bar{v} _i \}_{1\leq
i\leq n} \cup \{ \bar{v} ^a _{i,j } \}_{i<j }:$ $ H_{i,j;a}\mapsto
\bar{v}^a _{i,j};\ \ \ H_i \mapsto \bar{v} _i$ is $G$ equivariant.
Thus the lemma is proved.

(a) $S_0 (\bar {v} _i ) = S_{i-1} \cdots S_2 S_0 S_1 (v_0 ) =
S_{i-1} \cdots S_2 S_1 \cdot S_1 S_0 S_1 (v_0 ) = S_{i-1} \cdots S_2
S_1 (v_0 ) = \bar{v} _i $.

(b)$S_0 (\bar{v} ^a _{1,i} )= S_0 (S_{i-1} \cdots S_0 \cdots
S_{i-1})^a S_{i-1} \cdots S_2 (v_1 ) = (S_{i-1} \cdots S_0 \cdots
S_{i-1})^a S_{i-1} \cdots S_2 S_0 (v_1 )$

$= (S_{i-1} \cdots S_0 \cdots S_{i-1})^{a-1} S_{i-1} \cdots S_1 S_0
S_1 S_0 (v_1 ) = (S_{i-1} \cdots S_0 \cdots S_{i-1})^{a-1} S_{i-1}
\cdots S_2 (v_1 ) $

$= \bar{v} ^{a-1} _{1,i}.$

(c) If $i\geq 2$, $S_0 (\bar{v} ^a _{i,j}) = S_0 (S_{j-1} \cdots S_1
S_0 S_1 \cdots S_{j-1} )^a S_{j-1} \cdots S_{i+1} (v_i )$

$= (S_{j-1} \cdots S_1 S_0 S_1 \cdots S_{j-1} )^a S_{j-1} \cdots
S_{i+1} S_0 (v_i )= \bar{v} ^a _{i,j}.$

(d) For $i\geq 1$. $S_i (\bar{v} _i )= S_i S_{i-1} \cdots S_1 (v_0 )
= \bar{v} _{i+1} .$

(e) $S_i (\bar{v} _{i+1})= \bar{v} _{i}$  for $i\geq 1$. (Equivalent
to (d) )

(f) $S_i (\bar{v} _j ) = \bar{v} _j $ if $i\neq 0 $ and $j\neq i,i+1
.$

$j\neq i,i+1  \Leftrightarrow i>j \ or\ i<j-1 .$ If $i >j $ then
$S_i (\bar{v} _j )= S_i S_{j-1} \cdots S_1 (v_0 )$

$= S_{j-1} \cdots S_1 S_i (v_0 ) = \bar{v} _j $; If $i< j-1 $ then
$S_i (\bar{v} _j )= S_i S_{j-1} \cdots S_1 (v_0 )$

$= S_{j-1} \cdots S_{i} S_{i+1} S_i \cdots S_1 (v_0 ) =  S_{j-1}
\cdots S_{i+1} S_{i} S_{i+1} \cdots S_1 (v_0 )$

$= S_{j-1} \cdots S_1 S_{i+1} (v_0 ) =\bar{v} _j .$

(g) $S_i (\bar{v} ^a _{i,l} ) = \bar{v} ^a _{i+1 ,l} $ if $l\geq i+2
.$

First we have

$S_i (S_{l-1} \cdots S_0 \cdots S_{l-1} )= S_{l-1} \cdots S_i
S_{i+1} S_i \cdots S_0 \cdots S_{l-1}$

$ =  S_{l-1} \cdots S_{i+1} S_i S_{i+1} \cdots S_0 \cdots S_{l-1} =
S_{l-1} \cdots S_0 \cdots S_{i+1} S_{i} S_{i+1} \cdots S_{l-1} $

$= (S_{l-1} \cdots S_0 \cdots S_{l-1} )S_i .$

So

$S_i (\bar{v} ^a _{i,l} )= S_i (S_{l-1} \cdots S_0 \cdots S_{l-1} )
^a S_{l-1} \cdots S_{i+1} (v_i ) = (S_{l-1} \cdots S_0 \cdots
S_{l-1} ) ^a S_i S_{l-1}  \cdots S_{i+1} (v_i ) $

$= (S_{l-1} \cdots S_0 \cdots S_{l-1} ) ^a S_{l-1} \cdots S_{i+2}
S_i S_{i+1} (v_i ) $

$=(S_{l-1} \cdots S_0 \cdots S_{l-1} ) ^a S_{l-1} \cdots S_{i+2}
(v_{i+1} ) = \bar{v} ^a _{i+1 ,l} .$

(h) $S_i (\bar{v} ^a _{l,i}) = \bar{v} ^a _{l, i+1}$ for $l<i $.

$S_i (\bar{v} ^a _{l,i})= S_i (S_{i-1} \cdots S_0 \cdots S_{i-1} )
^a S_{i-1} \cdots S_{l+1}(v_l )$

$ = (S_i \cdots S_0 \cdots S_i ) ^a S_i S_{i-1} \cdots S_{l+1} (v_l
) = \bar{v} ^a _{l,i+1}$.

(i) $S_i (\bar{v} ^a _{k,l} ) = \bar{v} ^a _{k,l} $ if $\{k,l \}
\cap \{ i,i+1 \} = \emptyset .$

 $S_i (\bar{v} ^a _{k,l} )= S_i (S_{l-1} \cdots S_0 \cdots
S_{l-1})^a S_{l-1} \cdots S_{k+1} (v_k )$

\end{proof}

Let $\mathbb{E}^{'}= \{ w E_i w^{-1}  \}_{w\in G, 0\leq i\leq n-1}
$. $G$ acts on $\mathbb{E}^{'}$ by conjugation. This action
satisfies the conditions of Lemma 8.5 if let $\{E_0 ,E_1 ,\cdots
,E_{n-1} \}$ to be $\{v_0 ,v_1 ,\cdots ,v_{n-1} \}$ in the lemma.
 By this lemma there is an unique $G$-equivariant map $F$: $\mathcal
 {A} \rightarrow \mathbb{E}^{'}$ such that $F(H_1 )= E_0 $, $F(H_{i,i+1;0})=
 E_i$ for $1\leq i\leq n-1$. Define $F_i = F(H_i )$ for $1\leq i \leq n$, and
 $F_{i,j;a}= F(H_{i,j;a})$.

 By comparing Definition 2.1 (of the cyclotomic Brauer algebra )
 with Definition 8.3, we have the following theorem.

 \begin{thm} In the data $\Upsilon$ if $\mu =1$, $\mu_a =\sigma_a $
 $(1\leq a\leq m-1 )$ and $\sigma_0 =\tau_1$, then

 (1) Set a  map $\Phi$ by : $S_i \mapsto s_i $ $(1\leq i\leq n-1)$, $S_0 \mapsto
 t_1$,  $E_i \mapsto e_i$ $(1\leq i\leq n-1)$, $E_0 \mapsto 0$, then
 $\Phi$ extends to a surjective morphism from $B^{'} _G (\Upsilon)$
 to $\mathscr{B} _{m,n}(\delta )$ and $\ker \Phi = <E_0 >$, the idea
 generated by $E_0$.

 (2) Set a map $\Psi$ by : $s_i \mapsto S_i $ $(1\leq i\leq n-1)$,
 $t_i \mapsto S_{i-1}\cdots S_1 t_1 S_1 \cdots S_{i-1}$ $(1\leq i\leq
 n)$, $e_i \mapsto E_i $ $(1\leq i\leq n-1)$, then $\Psi$ extends to
 an morphism from  $\mathscr{B} _{m,n}(\delta )$ to $B^{'} _G
 (\Upsilon)$. We have $\Phi \circ \Psi = id$, so
 $B^{'} _G (\Upsilon)\cong \mathscr{B} _{m,n}(\delta ) \oplus <E_0 >
 $.

 \end{thm}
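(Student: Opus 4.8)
The plan is to prove the two statements together by exhibiting the maps $\Phi$ and $\Psi$ explicitly on generators, checking that each respects the defining relations of its source algebra, and then combining them. Everything reduces to a careful comparison of the relations $a)$–$q)$ of Definition 2.1 with the relations $1)$–$16)$ of Definition 8.3, carried out under the parameter dictionary of the statement, namely $\mu=1$, $\mu_a=\sigma_a$ $(1\le a\le m-1)$ and $\tau_1=\sigma_0$, which matches $\delta_0=\sigma_0$ and $\delta_a=\sigma_a$ of Definition 2.1. I would organize the verification so that the two homomorphisms are manifestly mutually inverse on the generators that survive, and then extract the kernel of $\Phi$ by a quotient/section argument.

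First I would treat $\Phi$, defined by $S_0\mapsto t_1$, $S_i\mapsto s_i$, $E_0\mapsto 0$, $E_i\mapsto e_i$ $(i\ge 1)$. Since $\Phi(E_0)=0$, every defining relation of $B^{'}_G(\Upsilon)$ that contains $E_0$ — precisely $5),7),13),15)$ — is sent to $0=0$ and costs nothing. Among the survivors, relations $1)$–$4)$ go to the group relations $a),b),c),j),q)$ cutting out the image of $G(m,1,n)$; relation $6)$ matches $e)$ once $\tau_1=\delta_0$; relations $9),11),14)$ match $f),k),g)$; relation $12)$ matches $p)$ once $\mu_a=\delta_a$; relation $16)$ matches $m)$ once $\mu=1$; and the remaining ``tangle'' relations $8)$ and $10)$ become identities in $\mathscr{B}_{m,n}(\delta)$ that require only short manipulations with $k),l),m),o)$. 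This shows $\Phi$ is well defined. Surjectivity is then immediate: $s_i=\Phi(S_i)$, $e_i=\Phi(E_i)$, and using $j)$ in the form $t_{i+1}=s_it_is_i$ one gets inductively $t_i=s_{i-1}\cdots s_1\,t_1\,s_1\cdots s_{i-1}$, a product of $\Phi(S_0)=t_1$ and the $\Phi(S_k)$.

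Next I would treat $\Psi$, setting $\Psi(s_i)=S_i$, $\Psi(e_i)=E_i$ and $\Psi(t_i)=S_{i-1}\cdots S_1S_0S_1\cdots S_{i-1}$ (so $\Psi(t_1)=S_0$), and verify the relations $a)$–$q)$ in $B^{'}_G(\Upsilon)$. Several collapse cleanly from the explicit word and $S_i^2=1$, $S_0^m=1$: e.g.\ $q)$ reduces to relation $1)$ after conjugation, and $j)$ becomes the identity $S_iS_{i-1}\cdots S_0\cdots S_{i-1}$ on both sides. The relations $a),b),c),e),f),g),k)$ again correspond to $1),3),4),6),9),14),11)$. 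The genuine work lies in the conjugation-type commutations $d),h),i)$, which I would settle by bookkeeping with relation $3)$ on the conjugated copies of $S_0$, and in the tangle relations $l),m),n),o),p)$, derived from $8),16),11),12)$ and their conjugates; here $n)$ ($e_ie_je_i=e_i$, $|i-j|=1$) and $o)$ ($e_it_it_{i+1}e_i=e_i$) are the delicate ones, handled exactly as in the proof of Theorem 8.6, which is why I would only indicate the nontrivial steps.

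Finally I would combine the two maps. A direct check on generators gives $\Phi\circ\Psi=\mathrm{id}_{\mathscr{B}_{m,n}(\delta)}$, in particular $\Phi(\Psi(t_i))=s_{i-1}\cdots s_1t_1s_1\cdots s_{i-1}=t_i$; hence $\Psi$ is injective and $\Phi$ is a split surjection. Since $\Phi(E_0)=0$ we have $\langle E_0\rangle\subseteq\ker\Phi$, and for the reverse inclusion I pass to the quotient $B^{'}_G(\Upsilon)/\langle E_0\rangle$: its generators are the images of $S_0,\dots,S_{n-1},E_0,\dots,E_{n-1}$, and since $E_0=0$ there while $S_0=\bar\Psi(t_1)$, $S_i=\bar\Psi(s_i)$, $E_i=\bar\Psi(e_i)$, the induced $\bar\Psi\colon\mathscr{B}_{m,n}(\delta)\to B^{'}_G(\Upsilon)/\langle E_0\rangle$ is onto; together with $\bar\Phi\circ\bar\Psi=\mathrm{id}$ this forces $\bar\Phi$ to be an isomorphism and hence $\ker\Phi=\langle E_0\rangle$. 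The section $\Psi$ then yields the vector-space decomposition $B^{'}_G(\Upsilon)=\Psi(\mathscr{B}_{m,n}(\delta))\oplus\langle E_0\rangle$ with $\Psi(\mathscr{B}_{m,n}(\delta))\cong\mathscr{B}_{m,n}(\delta)$, which is exactly the assertion. I expect the main obstacle to be not any single relation but the uniform-in-$i$ bookkeeping needed to confirm that $\Psi$ respects the conjugation-defined relations $d),h),i),l),m),n),o),p)$, together with pinning $\ker\Phi$ down exactly rather than merely containing $\langle E_0\rangle$ — the latter being precisely what the quotient-and-section argument above secures.
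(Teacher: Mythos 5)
Your write-up of $\Phi$ (well-definedness and surjectivity) and your quotient-and-section argument for the kernel are fine, but the proof breaks exactly at the step you defer: checking that $\Psi$ respects the absorption relation $o)$, $e_it_it_{i+1}=e_i$. Your pointer ``handled exactly as in the proof of Theorem 8.6'' is circular --- this statement \emph{is} that theorem, and the paper's own text offers nothing to import beyond ``compare the two definitions.'' Under $\Psi$, relation $o)$ (for $i=1$) reads $E_1T=E_1$ with $T=S_0S_1S_0S_1$, and this is \emph{not} a consequence of Definition 8.3; what the relations actually force is a twisted version. Indeed, relations $1),2)$ make $T$ central in $\langle S_0,S_1\rangle$ with $T^m=1$ and give the word identity $S_0^{m-a}=T^{m-a}S_1S_0^aS_1$; relations $10),11)$ give $T^{m-a}E_1=S_0^{m-a}S_1S_0^{m-a}E_1=E_1S_0^{m-a}S_1S_0^{m-a}=E_1T^{m-a}S_1$; hence, using relation $12)$ twice and $S_1E_1=E_1$,
\[
\mu_{m-a}E_1=E_1S_0^{m-a}E_1=E_1\,T^{m-a}S_1S_0^aS_1\,E_1=\bigl(E_1T^{m-a}S_1\bigr)S_0^aE_1=T^{m-a}E_1S_0^aE_1=\mu_a\,T^{m-a}E_1 ,
\]
so $T^{m-a}E_1=(\mu_{m-a}/\mu_a)E_1$ for every $a$, and in particular $TE_1=(\mu_1/\mu_{m-1})E_1$. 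Thus $\Psi$ can exist only if $(\mu_1-\mu_{m-1})E_1=0$, i.e.\ only under the symmetry $\sigma_a=\sigma_{m-a}$, which is neither a hypothesis of the theorem nor anything you verify.

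This is a genuine obstruction, not a defect of one derivation: for $n=2$, $m\geq 3$ and $\mu_a=\zeta^a$ with $\zeta$ a primitive $m$-th root of unity, one gets an explicit representation of \emph{all} relations of Definition 8.3 (induce the character $s_1\mapsto 1$, $t_1t_2\mapsto\zeta^2$ of the stabilizer $\langle s_1,t_1t_2\rangle$ up to $G(m,1,2)$, let $E_1$ act by $u_b\mapsto\mu_bu_0$ and $E_0$ by $0$) in which $TE_1=\zeta^2E_1\neq E_1$; so no manipulation of the relations can ever produce $o)$. On the other side, the same computation carried out inside $\mathscr{B}_{m,n}(\delta)$ (using $o)$, $k)$, $p)$) forces $(\delta_a-\delta_{m-a})e_1=0$ there, so for such parameters $e_1=0$ in $\mathscr{B}_{m,n}(\delta)$ while $E_1\neq 0$ in $B^{'}_G(\Upsilon)$; then $E_1\in\ker\Phi$ but $E_1\notin\langle E_0\rangle$ (the representation above kills $\langle E_0\rangle$ but not $E_1$), so part (1) fails too. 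In short, as stated both parts of the theorem are false without the admissibility hypothesis $\sigma_a=\sigma_{m-a}$ for all $a$; once that hypothesis is added, the displayed identity yields $T^bE_1=E_1$ for all $b$, relation $o)$ and its conjugates become derivable, and the rest of your argument --- the remaining relation checks, $\Phi\circ\Psi=\mathrm{id}$, and the kernel/splitting argument --- does go through. The missing idea, then, is recognizing that the ``delicate'' relations you set aside encode a nontrivial constraint on the parameters rather than routine bookkeeping.
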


\section{ Conclusions }

If we take off the relation $(1)^{'}$ in Definition 1.1, we obtain
an algebra bigger than $B_G (\Upsilon)$. Denote this algebra as
$\bar{B}_G (\Upsilon)$.  It is easy to see $\bar{B}_G (\Upsilon)$
coincide with $B_G (\Upsilon)$ if $G$ is a simply laced Coxeter
group. The algebra $\bar{B}_G (\Upsilon)$ also satisfy hypothesis 1
and 2 in Section 1. We can prove $\bar{B}_G (\Upsilon)$ is finite
dimensional if $G$ is a finite group.  In general, $\bar{B}_G
(\Upsilon)$ has $B_G (\Upsilon)$ as a genuine quotient, thus contain
more irreducible representations.

We can ask the following questions. If $B_G (\Upsilon)$ are
cellular, or generically semisimple, or have invariant dimension for
any finite $G$?  Does $B_G (\Upsilon)$ has affine cellular structure
in the sense of Konig and Xi \cite{KX} when $G$ is an affine Coxeter
group? How to deform $B_G (\Upsilon)$ by using the associated KZ
connection?

In \cite{CGW1} the authors mentioned the perspective of application
of generalized BMW algebras in representation theory. Beside of it
we'd like to mention that through analysis those Brauer type
algebras we can obtain some new flat $G-$invariant connections on
the complementary spaces $M_G$, thus obtain some new representations
of the corresponding Artin group or complex braid group, just as the
case of $H_3$ type.  In the same time by solving the equations of
flat sections we would encounter with some new fuchs equations on
$M_G$ and some new hypergeometric type functions.


{\small

\hspace{-0.70cm} {\sc Department of Mathematics}

\nd{\sc University of Science and Technology of China}

\nd {\sc Hefei 230026 China}

\nd {\sc E-mail addresses}: {\sc Zhi Chen} ({\tt
zzzchen@ustc.edu.cn}).

\end{document}